\documentclass[preprint,11pt,3p]{elsarticle}
\usepackage{color}
\usepackage{amsmath,amssymb,amsthm,amscd,mathrsfs}
\usepackage{fancyhdr}
\usepackage{geometry}
\usepackage{xcolor}
\geometry{top=2.5cm, bottom=2.5cm}
\newcommand{\tn}{\tilde{\nu}}
\numberwithin{equation}{section}
\newtheorem{theorem}{Theorem}[section]
\newtheorem*{condition}{Condition}

\newtheorem{proposition}{Proposition}[section]
\newtheorem{lemma}{Lemma}[section]
\newtheorem{definition}{Definition}[section]
\newtheorem{corollary}{Corollary}[section]
\newtheorem{remark}{Remark}[section]
\newtheorem{ex}{Example}[section]
\biboptions{sort&compress}
\journal{Elsevier}
\usepackage{times}
\usepackage[colorlinks=true]{hyperref}

\begin{document}

\begin{frontmatter}
\title{Critical Sobolev inequalities and global extremals for \\ homogeneous H\"{o}rmander vector fields on $\mathbb{R}^n$}

\author[label1]{Hua Chen\corref{cor1}}
\ead{chenhua@whu.edu.cn}
\author[label2]{Hong-Ge Chen}
\ead{hongge\_chen@whu.edu.cn}
\author[label3]{Jin-Ning Li}
\ead{lijinning@whu.edu.cn}

\address[label1]{School of Mathematics and Statistics, Wuhan University, Wuhan 430072, China}
\address[label2]{School of Mathematics and Statistics, Key Laboratory of Nonlinear Analysis \& Applications \\ (Ministry of Education), Central China Normal University, Wuhan 430079, China}
\address[label3]{College of Mathematics and Statistics, Chongqing University, Chongqing 401331, China}

\cortext[cor1]{corresponding author}

\begin{abstract}

We study critical Sobolev inequalities and Sobolev extremal functions for homogeneous H\"{o}rmander vector fields on $\mathbb R^n$.  The focus is the non-equiregular case.  In this setting, the sub-Riemannian flag may have different growth at different points, the volume of subunit balls is not governed by a single local dimension, and the translation structure available on Carnot groups is absent in general.  These features make both the range of admissible Sobolev exponents and the attainment of the optimal Sobolev constant substantially more delicate, especially on unbounded domains.

We first give a domain-dependent description of the admissible Sobolev exponents in terms of the volume growth rates of subunit balls.  This description separates the roles of the pointwise homogeneous dimension, the non-isotropic dimension of the domain, and the singular strata that may be approached at infinity.  As a consequence, we prove a global Sobolev inequality for all homogeneous H\"{o}rmander vector fields on $\mathbb R^n$, without assuming any underlying group law.

We then prove that the optimal Sobolev constant is attained. The main new geometric ingredient is a smooth family of maps $T(w,x)$ along the maximal level set $H$ of the pointwise homogeneous dimension. These maps preserve Lebesgue measure and the horizontal gradient, and they play the role of left translations in a setting where no group multiplication is available. In this sense,  $T(w,x)$ compensates for the lack of an intrinsic translation structure and allows one to recenter minimizing sequences in the critical concentration-compactness argument. The compactness theorem obtained in this way applies to all homogeneous H\"{o}rmander vector fields.  We also prove that, on every open set meeting this maximal level set, the optimal Sobolev constant is independent of the domain.

\end{abstract}
\begin{keyword}
Homogeneous H\"{o}rmander's vector fields \sep Sobolev inequality \sep non-isotropic dimension \sep extremal function.
\MSC[2020] 35J70 \sep 35H20 \sep 35R03 \sep 46E35
\end{keyword}
\end{frontmatter}


\section{Introduction}

\subsection{Overview and motivation} 
The Sobolev inequality is one of the central estimates in analysis.  In Euclidean space it says that, for $n\geq2$ and $1\leq p<n$,
\begin{equation}\label{1-1}
  C\left(\int_{\mathbb{R}^n}|u|^{p^{*}}dx\right)^{\frac{p}{p^{*}}}
  \leq \int_{\mathbb{R}^n}|\nabla u|^{p}dx,
  \qquad p^*=\frac{np}{n-p},
  \qquad u\in C_0^\infty(\mathbb R^n).
\end{equation}
The exponent $p^*=\frac{np}{n-p}$ is the critical Sobolev exponent determined by the Euclidean dimension.  For $1<p<n$, the Sobolev inequality at this exponent is invariant under translations and dilations, and these symmetries account for the loss of compactness.  With the optimal constant, the extremals in the homogeneous Sobolev space are the Aubin--Talenti bubbles, up to these symmetries and multiplication by constants.  This structure is closely related to the role of the optimal Sobolev inequality in the Yamabe problem, especially when $p=2$; see, for instance, \cite{Lee1987}.

This paper addresses the corresponding questions for sub-Riemannian structures on $\mathbb R^n$ generated by a family of homogeneous H\"{o}rmander vector fields.  Let $X=(X_1,\ldots,X_m)$ be a family of smooth real vector fields on $\mathbb R^n$.  We say that $X$ satisfies H\"{o}rmander's condition with index $r$ if the following holds; see \cite{hormander1967}.

\begin{condition}[H.0]
\label{H0}
There exists a smallest positive integer $r$ such that $X_{1},\ldots,X_{m}$, together with all their commutators of length at most $r$, span the tangent space $T_x\mathbb R^n$ at each point $x\in\mathbb R^n$.
\end{condition}

The associated H\"{o}rmander operator
\[
  \triangle_X:=-\sum_{j=1}^{m}X_j^*X_j
\]
is a natural subelliptic analogue of the Laplacian.  It appears in sub-Riemannian geometry, CR geometry, and control theory, and its analysis is closely tied to the geometry of Carnot--Carath\'eodory balls; see, for example, \cite{Folland-Stein1974,stein1976,NSW85,Bellaiche1996,Jean2014}.  Related H\"{o}rmander vector fields also arise naturally in several complex variables, for instance in problems on CR mappings between algebraic hypersurfaces \cite{Baouendi-Huang-Rothschild1996}.  In this degenerate setting, Sobolev inequalities are not merely formal analogues of their Euclidean counterparts.  They reflect the volume growth of subunit balls and determine the critical scale for nonlinear equations.

An important model class is provided by Carnot groups, which are the nilpotent models of equiregular sub-Riemannian manifolds.  Let $X_1,\ldots,X_m$ be left-invariant horizontal vector fields spanning the first layer of a Carnot group $\mathbb G$ of homogeneous dimension $Q$.  Then, for $1<p<Q$, the Sobolev inequality takes the form
\begin{equation}\label{1-2}
 C\left(\int_{\mathbb{G}} |u|^{p_Q^*}\,dx\right)^{\frac{p}{p_Q^*}}
 \leq \int_{\mathbb{G}} |Xu|^p\,dx,
 \qquad u\in C_0^\infty(\mathbb G),
\end{equation}
where $C=C(X,p)>0$, $p_Q^*=\frac{pQ}{Q-p}$ is the critical Sobolev exponent, $Xu=(X_1u,\ldots,X_m u)$ is the horizontal gradient, and $dx$ is a Haar measure on $\mathbb G$.  This inequality goes back to the work of Folland and Stein \cite{Folland-Stein1974,Folland1975} and has since been developed in many directions; see, for example, \cite{varopoulos1992book}.  In this setting, the group law provides translations and the intrinsic dilations determine the critical exponent. The existence of extremals for the corresponding optimal inequality is also known.   In the Heisenberg case with $p=2$, the problem is closely related to the CR Yamabe problem and was studied by Jerison--Lee \cite{Jerision-Lee1984,Jerision-Lee1987,Jerision-Lee1988,Jerision-Lee1989} and Gamara--Yacoub \cite{Gamara2001,Gamara-Yacoub-2001}.  For general Carnot groups and $1<p<Q$, the existence of extremals was established by Vassilev \cite{Vassilev2006}.

The passage from Carnot groups to non-equiregular sub-Riemannian structures is not a minor change in coefficients.  It removes two structural features that underlie the classical theory.  The first is the loss of a single dimensional scale.  The flag of the sub-Riemannian
structure may vary from point to point, and hence the volume of small subunit balls is governed by a pointwise homogeneous dimension rather than by one global homogeneous dimension.  On unbounded domains, this local variation is not the only issue: one must also take into account the singular strata that can be approached by sequences escaping to infinity.  Consequently, the relevant Sobolev exponents are no longer determined by a single fixed number; rather, they form a domain-dependent interval of admissible values, even when the vector fields themselves are fixed.

The second structural feature is the availability of translations preserving the relevant energy.  On a Carnot group, left translations preserve Haar measure, the horizontal gradient norm, the subunit distance, and hence the critical Sobolev quotient.  In a general non-equiregular sub-Riemannian structure, there is no compatible group multiplication, while ordinary Euclidean translations usually change the vector fields and hence the horizontal energy.  This loss of an intrinsic recentering operation is the main obstruction in the critical variational problem.

The first difficulty is already visible in Grushin-type vector fields.  Consider
\[
 X=(\partial_{x_1},\ldots,\partial_{x_m},(\alpha+1)|x|^\alpha\partial_{y_1},\ldots,
      (\alpha+1)|x|^\alpha\partial_{y_l}),
\]
where $z=(x,y)\in\mathbb R^m\times\mathbb R^l$, $n=m+l$, and
$Q=m+l(\alpha+1)$.  When $Q>2$, the global Sobolev inequality for $p=2$ takes the form
\begin{equation}\label{1-3}
  C\left(\int_{\mathbb{R}^n}|u|^{\frac{2Q}{Q-2}}dz\right)^{\frac{Q-2}{Q}}
  \leq \int_{\mathbb{R}^n}\left(|\nabla_xu|^2+(\alpha+1)^2|x|^{2\alpha}|\nabla_yu|^2\right)dz,
  \qquad u\in C_0^\infty(\mathbb R^n).
\end{equation}
This inequality appears in the work of Loiudice \cite{Loiudice2006} and Monti--Morbidelli \cite{Monti2006}, building on Franchi--Lanconelli \cite{Franchi-Lanconelli1984}.  If one restricts instead to an exterior region away from the degeneracy set, for instance
\[
  \Omega=\{(x,y)\in \mathbb{R}^n \mid |x|>1\},
\]
then, when $n>2$, the Euclidean critical exponent $\frac{2n}{n-2}$ is also admissible.  Since Lebesgue spaces on domains of infinite measure are not ordered by inclusion, this is not a consequence of a simple embedding between $L^q$ spaces.  The natural problem is therefore to determine the full range of Sobolev exponents for a given domain.

This is the first theme of the paper.  We determine the admissible range of Sobolev exponents while keeping the dependence on the domain explicit.  The answer is formulated in terms of two sets: a geometric set recording the volume growth exponents of subunit balls, and an analytic set recording the Sobolev exponents for which the corresponding inequality holds on the given domain.  Theorems \ref{thm1}--\ref{thm3} show that these two sets are closely linked, and identify their endpoints in terms of the pointwise homogeneous dimension, the non-isotropic dimension of the domain, and the singular geometry that can be approached at infinity.  As a consequence, we obtain the global critical Sobolev inequality on $\mathbb R^n$ for homogeneous H\"{o}rmander vector fields, without assuming a group law or equiregularity.

The second theme concerns the attainment of optimal Sobolev constants.  For non-equiregular homogeneous H\"{o}rmander vector fields, the existence of extremal functions for critical Sobolev inequalities has remained largely open.  Before the present work, even for $p=2$, such attainment had been established only for some special Grushin-type models.  For the Grushin-type vector fields above, let
\begin{equation}\label{1-4}
  c_{m,l,\alpha}:=\inf_{\substack{u\in C_0^\infty(\mathbb R^n)\\
  \|u\|_{L^{2_Q^*}(\mathbb R^n)}=1}}
  \int_{\mathbb{R}^n}\left(|\nabla_xu|^2+(\alpha+1)^2|x|^{2\alpha}|\nabla_yu|^2\right)dz,
\end{equation}
where $2_Q^*=\frac{2Q}{Q-2}$.  The attainment of $c_{m,l,\alpha}$ was known only in the following situations:
\begin{itemize}
  \item Beckner \cite{Beckner2001}: $(m,l,\alpha)=(2,1,1)$;

  \item Monti--Morbidelli \cite{Monti2006,Monti2006-cpde}: $m=1$, $l\geq1$, $\alpha>0$;

  \item Dou--Sun--Wang--Zhu \cite{Dou-Sun-Wang2022}: $(m,l)\neq (2,1)$, $\alpha>0$, and $\frac{2Q}{Q-2}\in \mathbb N$.
\end{itemize}
These proofs exploit model-specific features tied to the explicit form of the Grushin-type vector fields, such as Kelvin-type transforms, hyperbolic symmetries, or symmetry reductions.  Such tools do not extend to a general non-equiregular family of homogeneous H\"{o}rmander vector fields.  In this generality the vector fields are described by homogeneity and H\"{o}rmander's condition rather than by an explicit normal form; in particular, one usually has no compatible group law, no explicit Kelvin transform, and no comparable symmetry reduction.

A natural way to attack the general problem is to use the concentration-compactness method for the critical Sobolev quotient.  The preceding obstruction shows exactly what is missing: one needs a way to recenter a minimizing sequence without changing the measure, the horizontal gradient norm, or the critical quotient.  In the absence of group translations, this recentering step has to be replaced by a construction intrinsic to the given family of vector fields.

The main geometric contribution of this paper is to construct precisely such a replacement.  We build a smooth family of maps
\[
   T:H\times\mathbb R^n\to\mathbb R^n,
\]
where $H=\{x\in\mathbb R^n\mid\nu(x)=Q\}$ is the maximal level set of the pointwise homogeneous dimension.  For each $w\in H$, the map $T(w,\cdot)$ preserves Lebesgue measure and the horizontal gradient norm, and satisfies $T(w,0)=w$.  In this precise sense, it plays the role of left multiplication in Carnot groups.  The point is not that the vector fields carry a hidden group symmetry.  Rather, the construction supplies the missing recentering operation in a setting where group multiplication and left invariance are absent.

This construction, as shown in Proposition \ref{prop4-1}, is the key point behind Theorem \ref{thm4}.  It allows one to move concentration centers along $H$ without changing the critical Sobolev quotient, and then to combine this motion with homogeneous dilations.  The resulting compactness theorem is not model-specific.  It applies to all homogeneous H\"{o}rmander vector fields and, in particular, covers the smooth Grushin-type vector fields studied by Beckner \cite{Beckner2001}, Monti--Morbidelli \cite{Monti2006,Monti2006-cpde}, and Dou--Sun--Wang--Zhu \cite{Dou-Sun-Wang2022}, without relying on Kelvin transforms, hyperbolic symmetries, or symmetry reductions.  The same construction also yields the domain-independence of the optimal Sobolev constant on every open set meeting $H$.

\subsection{Main results}

For $n\geq 2$, let  $X=(X_{1},\ldots,X_{m})$ be a family of real smooth vector fields on $\mathbb{R}^n$ satisfying 
the following conditions:
\begin{condition}[H.1]
\label{H1}
   There exists a family of non-isotropic dilations $\{\delta_{t}\}_{t>0}$ of the form
  \[ \delta_{t}:\mathbb{R}^n\to \mathbb{R}^n,\qquad \delta_{t}(x)=(t^{\alpha_{1}}x_{1},t^{\alpha_{2}}x_{2},\ldots,t^{\alpha_{n}}x_{n}), \]
 where $1=\alpha_{1}\leq \alpha_{2}\leq\cdots\leq \alpha_{n}$ are positive integers, such that $X_{1},\ldots,X_{m}$ are $\delta_{t}$-homogeneous of degree $1$. Specifically, for all $t>0$, $f\in  C^{\infty}(\mathbb{R}^n)$, and $j = 1, \ldots, m$,
 \[ X_{j}(f\circ \delta_{t})=t(X_{j}f)\circ \delta_{t}. \]
\end{condition}

\begin{condition}[H.2]
\label{H2}
    The vector fields $X_{1},\ldots,X_{m}$ are linearly independent in $\mathcal{X}(\mathbb{R}^n)$ as linear differential operators and satisfy  H\"{o}rmander's condition at $0\in \mathbb{R}^{n}$, i.e.,
       \[ \dim\{Y(0)\mid Y\in \text{\rm Lie}(X)\}=n, \]
 where $\text{\rm Lie}(X)$ is the Lie algebra generated by $X_1,\ldots,X_m$, and  $Y(0)$ denotes the value of the vector field $Y$ at the origin.
\end{condition}

The vector fields $X=(X_{1},\ldots,X_{m})$ satisfying conditions (\hyperref[H1]{H.1}) and (\hyperref[H2]{H.2}) is usually referred to as homogeneous H\"{o}rmander vector fields.  Proposition \ref{prop2-3} indicates that $X$ satisfy H\"{o}rmander's condition (\hyperref[H0]{H.0}) on $\mathbb R^n$ with H\"{o}rmander index $\alpha_n$.  The number
\begin{equation}\label{1-5}
  Q:=\sum_{j=1}^{n}\alpha_j
\end{equation}
is called the homogeneous dimension associated with $X$. A result of \cite{Biagi-Bofiglioli-ccm2015} implies that if $\dim {\rm Lie}(X)>n$, then $X$ cannot be left-invariant under any group law.

Conditions (\hyperref[H1]{H.1}) and (\hyperref[H2]{H.2})  retain the anisotropic dilations familiar from Carnot groups but do not require a
compatible group law.  The resulting class of homogeneous H\"{o}rmander vector fields contains both the left-invariant horizontal vector fields on Carnot groups and standard non-equiregular models, such as the Grushin, Bony, and Martinet vector fields; see, for example, \cite{Bonfiglioli2007}.  Homogeneous H\"{o}rmander vector fields of this type also arise as nilpotent approximations of H\"{o}rmander vector fields in privileged coordinates; see \cite{Jean2014,Bellaiche1996,Folland1977}.  The associated sum-of-squares operators, known as the homogeneous H\"{o}rmander operators, constitute a broad class of degenerate operators  arising in sub-Riemannian geometry.

We now introduce the dimension parameters presented in the statements.  For each $x\in\mathbb R^n$, let $V_j(x)$ be the $j$-th term of the sub-Riemannian flag, namely the span at $x$ of all commutators of length at most $j$, and let $\nu_j(x)=\dim V_j(x)$. Let  $r(x)$ denote the degree of nonholonomy at $x$ and set $\nu_0(x)=0$, the pointwise homogeneous dimension is
\[
   \nu(x)=\sum_{j=1}^{r(x)} j\bigl(\nu_j(x)-\nu_{j-1}(x)\bigr).
\]
For an open set $\Omega$, we write
\[
   \tilde{\nu}=\max_{x\in\overline\Omega}\nu(x)
\]
for the non-isotropic dimension of $\overline\Omega$.  When $\Omega$ is unbounded, we also use the asymptotic projection
\[
  \Pi_\infty(\Omega)=\bigcap_{R>0}\overline{\left\{\delta_{1/d(x)}(x)\mid x\in\Omega,\ d(x)>R\right\}}
  \subset \partial B(0,1),
\]
where $d(x)=d(x,0)$ is the subunit distance to the origin, and the corresponding asymptotic singular dimension
\[
   \nu_{\rm sing}:=\max_{z\in\Pi_\infty(\Omega)}\nu(z).
\]
For bounded $\Omega$ we set $\nu_{\rm sing}=-\infty$.  These quantities are defined and discussed in Section \ref{Section2}.  They are included here because they are precisely the invariants which enter the lower endpoint of the range of Sobolev exponents.

We then define the two sets which connect geometry and Sobolev inequalities.

\begin{definition}
\label{def-1-1}
Let $\Omega$ be an open subset of $\mathbb{R}^n$. 
\begin{itemize}
  \item The volume growth rate set $\mathcal{I}(\Omega)$ is 
\begin{equation}\label{1-6}
  \mathcal{I}(\Omega):=\left\{\kappa>0~\bigg|~\inf_{r>0,~x\in \overline{\Omega}}\frac{|B(x,r)|}{r^{\kappa}}>0 \right\},
\end{equation}
where $B(x,r)$ is the subunit ball associated with the subunit metric $d$, and $|B(x,r)|$ denotes its Lebesgue measure. We also define the interior volume growth set
\begin{equation*}
  \mathcal{I}_{\mathrm{int}}(\Omega) := \left\{ \kappa > 0 \;\middle|\; \inf_{x\in\Omega,\, 0<r<d(x,\Omega^{c})} \frac{|B(x,r)|}{r^{\kappa}} > 0 \right\},
\end{equation*}
with the convention  $d(x,\varnothing)=+\infty$. 

  \item  The admissible endpoint set $\mathcal{S}(\Omega)$ is 
\begin{equation}\label{1-7}
  \mathcal{S}(\Omega):=\left\{1<\kappa\leq Q \mid \exists C>0~\text{such that}~\|u\|_{L^{\frac{\kappa}{\kappa-1}}(\Omega)}\leq C  \|Xu\|_{L^1(\Omega)}~~\forall u\in C_0^\infty(\Omega)\right\}.
\end{equation}
Thus $\kappa\in \mathcal{S}(\Omega)$, while $\frac{\kappa}{\kappa-1}$ is the corresponding endpoint Sobolev exponent.
\end{itemize}
\end{definition}

Our first result relates the two sets $\mathcal I(\Omega)$ and $\mathcal S(\Omega)$.
  \begin{theorem}
    \label{thm1} 
    Let $X=(X_1,\ldots,X_m)$ be homogeneous H\"{o}rmander vector fields satisfying conditions (\hyperref[H1]{H.1}) and (\hyperref[H2]{H.2}). If $\Omega$ is an open subset of $\mathbb R^n$, then
  \begin{equation}
    \mathcal{I}(\Omega)\subset  \mathcal{S}(\Omega)\subset \mathcal{I}_{\mathrm{int}}(\Omega).
  \end{equation}
  \end{theorem} 

Theorem \ref{thm1} should be read as a localized volume-growth and Sobolev principle.  For Carnot groups the corresponding dimension is a single number.  Here, on the contrary, the relevant dimension may depend on the domain and may range over an interval.

To state the structure of this interval, we use automorphisms of the vector fields.

\begin{definition}
\label{def-1-2}
Let  ${\rm Diff}(\mathbb{R}^n)$ be the group of all smooth diffeomorphisms of $\mathbb{R}^n$. A smooth diffeomorphism  $\mathscr{A}\in {\rm Diff}(\mathbb{R}^n)$ is an automorphism of  $X=(X_1,\ldots,X_m)$ if 
\begin{equation}\label{eq-1-15}
X_{j}(f\circ \mathscr{A})=(X_{j}f)\circ \mathscr{A}\qquad \forall~ 1\leq j\leq m,~~f\in C^{\infty}(\mathbb{R}^n). 
\end{equation}
We denote by $\mathcal G$ the group of volume-preserving automorphisms:
 \begin{equation}\label{group-G}
\mathcal{G}:=\{\mathscr{A}\in {\rm Diff}(\mathbb{R}^n)\mid \mathscr{A}~\text{is an automorphism of $X$ and}~~|\det(J_{\mathscr{A}}(x))|=1\quad\forall x\in \mathbb{R}^n\},
 \end{equation}
where  $J_{\mathscr{A}}(x)$ denotes the Jacobian matrix of $\mathscr{A}$ at $x$.
\end{definition}
\begin{remark}
The group $\mathcal G$ is nontrivial.  Indeed, by Proposition \ref{prop2-2}, every coordinate direction $e_j$ with
$\alpha_j=\alpha_n$ is translation-invariant.  Hence $x\mapsto x+w$ belongs to $\mathcal G$ for every
$  w\in\operatorname{span}\{e_j\mid\alpha_j=\alpha_n\}$.
\end{remark}

The next theorem describes the volume-growth set. 
\begin{theorem}
\label{thm2}
Under the assumptions of Theorem \ref{thm1},  $\mathcal{I}(\Omega)$ has the following properties:
  \begin{enumerate}[(\text{I}1)]
    \item For every $\mathscr{A}\in \mathcal{G}$ and $t>0$, 
   \[ \mathcal{I}(\Omega)=\mathcal{I}(\mathscr{A}(\Omega))=\mathcal{I}(\delta_{t}(\Omega)).\] 
        \item $\mathcal{I}(\Omega)$ is a non-empty interval of the form either $(\inf\mathcal{I}(\Omega),Q]$ or $[\inf\mathcal{I}(\Omega), Q]$, where the
infimum satisfies
            \[ \inf\mathcal{I}(\Omega)\geq \tilde{\nu}. \]
   Here, $\tilde{\nu}$ denotes the non-isotropic dimension of $\overline{\Omega}$ associated with  $X$, as recalled above and defined in \eqref{2-3}.       
             
    \item If $0\in \overline{\Omega}$, then \[ \mathcal{I}(\Omega)=\{Q\}.\] In particular, $\mathcal{I}(\mathbb{R}^n)=\{Q\}$.
    \item \label{I4} If the asymptotic singular
dimension (see \eqref{def-asd})  satisfies $\nu_{\rm sing}=\max_{z \in \Pi_\infty(\Omega)} \nu(z) \le \tilde{\nu}$, then 
 \[\mathcal{I}(\Omega)=[\tilde{\nu}, Q].\]
In particular, if $\Omega$ is bounded, then $\Pi_\infty(\Omega)=\varnothing$, and we adopt the notation $\nu_{\rm sing}=-\infty$.
  \end{enumerate} 
\end{theorem}

The lower endpoint of $\mathcal I(\Omega)$ can be delicate.  It need not be an integer, and it need not belong to $\mathcal I(\Omega)$; see Example \ref{example3-1}.  This is why the statement is naturally formulated in terms of sets rather than a single effective dimension.

The corresponding statement for the admissible endpoint set is as follows.

\begin{theorem}
\label{thm3}
Under the assumptions of Theorem \ref{thm1}, $\mathcal{S}(\Omega)$ satisfies the following properties:
  \begin{enumerate}[(S1)]
    \item For every $\mathscr{A}\in \mathcal{G}$ and $t>0$, 
    \[
    \mathcal{S}(\Omega) = \mathcal{S}(\mathscr{A}(\Omega)) = \mathcal{S}(\delta_t(\Omega)).   \]
    \item $\mathcal{S}(\Omega)$ is a non-empty interval of the form either $(\inf \mathcal{S}(\Omega), Q]$ or $[\inf \mathcal{S}(\Omega), Q]$, where the infimum satisfies
    \[
    \inf \mathcal{S}(\Omega) \geq \max_{x \in \Omega} \nu(x).
    \]
Here, $\nu(x)$ denotes the pointwise homogeneous dimension at $x$, as recalled above and defined in \eqref{2-1} below.
    \item \label{S3} 
Suppose there exists a point $x_0 \in \overline{\Omega}$ with $\nu(x_0) = \tilde{\nu} := \max_{x \in \overline{\Omega}} \nu(x)$ satisfying the following interior corkscrew condition with respect to the subunit metric: there are constants $c \in (0, 1)$ and $r_0 > 0$ such that for any $0 < r < r_0$, there exists a point $y_r\in \Omega$ satisfying
$$B(y_r, c r) \subset \Omega \cap B(x_0, r).$$
Then $\inf \mathcal{S}(\Omega) \geq \tilde{\nu}$.

\item\label{S4} If $\partial\Omega$ is locally $C^1$ near $x_0\in\partial\Omega$ and $x_0$ is non-characteristic, namely $X_j(x_0)\notin T_{x_0}(\partial\Omega)$ for some $1\leq j\leq m$, then $x_0$ satisfies the corkscrew condition in (\hyperref[S3]{S3}).  Consequently, if also $\nu(x_0)=\tilde{\nu}$, then $\inf\mathcal S(\Omega)\geq\tilde{\nu}$.
  \item If $0\in\overline\Omega$ and there is an open set $U\subset\Omega$ such that $\delta_t(U)\subset\Omega$ for every $0<t\leq1$, then the origin satisfies the corkscrew condition in (\hyperref[S3]{S3}), and
  \[
      \mathcal S(\Omega)=\{Q\}.
  \]
  In particular, $\mathcal S(\mathbb R^n)=\{Q\}$.
  \item\label{S6} If $\nu_{\rm sing}\leq\tilde{\nu}$ and the corkscrew condition in (\hyperref[S3]{S3}) holds at a point $x_0\in\overline\Omega$ with $\nu(x_0)=\tilde{\nu}$, then
  \[
      \mathcal S(\Omega)=[\tilde{\nu},Q].
  \]
 
  \end{enumerate}
\end{theorem}

The following immediate consequences illustrate the scope of the preceding results.

\begin{corollary}
\label{corollary1-1}
Under the assumptions of Theorem \ref{thm1}, the following statements hold:
\begin{enumerate}
  \item [(1)] If $\mathbb R^n$ is equiregular with respect to $X$, then
  \[   \mathcal{I}(\Omega)=\mathcal{S}(\Omega)=\{Q\}. \]
  \item [(2)] If $\Omega=\mathbb R^n\setminus K$ is an exterior domain, where $K$ is compact, then \[\mathcal{I}(\Omega)=[\tn,Q]\subset \mathcal{S}(\Omega).\]
       Moreover, $\mathcal{S}(\Omega)=[\tn,Q]$ if one of the following conditions is satisfied:
\begin{enumerate}[(2.1)]
  \item $\Omega=\mathbb{R}^{n}\setminus \{0\}$;
  \item the corkscrew condition in (\hyperref[S3]{S3}) holds;
  \item the interior $K^\circ$ contains the origin.
\end{enumerate}
 \item [(3)]
If $\Omega$ is an open star-shaped domain with respect to the origin, then 
\[\mathcal{I}(\Omega)=\mathcal{S}(\Omega)=\{Q\}. \]

\end{enumerate}
\end{corollary}

Theorems \ref{thm1}--\ref{thm3} give a precise and domain-dependent form of the endpoint Sobolev inequality.
\begin{remark}
Let $\Omega$ be a bounded open subset of $\mathbb{R}^n$. Theorem \ref{thm1} and Theorem \ref{thm2} yield that
\begin{equation}\label{1-10}
  \left(\int_{\Omega}|u|^{\frac{\tn}{\tn-1}}dx\right)^{\frac{\tn-1}{\tn}}\leq C\int_{\Omega}|Xu|dx\qquad \forall~ u\in C_{0}^{\infty}(\Omega).
\end{equation}
 Furthermore, Theorem \ref{thm3} shows that, if there exists a point $x_0 \in \overline{\Omega}$ with $\nu(x_0) = \tilde{\nu}$ satisfying the interior corkscrew condition described in (\hyperref[S3]{S3}), the Sobolev exponent $\frac{\tn}{\tn-1}$ is optimal  (i.e., $\frac{\tn}{\tn-1}$ is the largest exponent such that \eqref{1-10} holds).  Inequality \eqref{1-10} coincides with our recent result in \cite[Theorem 1.1]{chen-chen-li2024}.
\end{remark}

\begin{remark} 
Let $\Omega$ be an unbounded open subset with infinite measure. If the asymptotic singular
dimension satisfies $\nu_{\rm sing}=\max_{z \in \Pi_\infty(\Omega)} \nu(z) \le \tilde{\nu}$, then Theorem \ref{thm1} and Theorem \ref{thm2} imply that
\begin{equation}\label{1-11}
  \left(\int_{\Omega}|u|^{\frac{q}{q-1}}dx\right)^{\frac{q-1}{q}}\leq C\int_{\Omega}|Xu|dx\qquad \forall~ u\in C_{0}^{\infty}(\Omega)
\end{equation}
holds for any $\tn\leq q\leq Q$.  Moreover, Theorem \ref{thm3} shows that if there exists a point $x_0 \in \overline{\Omega}$ with $\nu(x_0) = \tilde{\nu}$ satisfying the interior corkscrew condition described in (\hyperref[S3]{S3}), then $\frac{\tn}{\tn-1}$ is the largest Sobolev exponent for \eqref{1-11} to hold. 
\end{remark}

The endpoint inequality for $p=1$ also yields the usual $L^p$ Sobolev inequalities.  Applying it to suitable powers of $u$, one obtains in particular the following global inequality on $\mathbb R^n$.

\begin{corollary}\label{corollary1-2}
Under the assumptions of Theorem \ref{thm1}, let $Q\geq 3$ and $1\leq p<Q$. Then, there exists a constant $C=C(X,p)>0$ such that 
\begin{equation}\label{1-12}
  C\left(\int_{\mathbb{R}^n}|u|^{p_{Q}^{*}}dx\right)^{\frac{p}{p_{Q}^{*}}}\leq \int_{\mathbb{R}^n}|Xu|^{p}dx \qquad \forall~ u\in \mathcal{M}_{X,0}^{p,Q}(\mathbb{R}^n),
\end{equation}
where $\mathcal{M}_{X,0}^{p,Q}(\mathbb{R}^n)$ denotes the completion of  $C_{0}^{\infty}(\mathbb{R}^n)$ with respect to the norm 
\[ \|u\|_{\mathcal{M}_{X,0}^{p,Q}(\mathbb{R}^n)}:=\|u\|_{L^{p_{Q}^{*}}(\mathbb{R}^n)}+\|Xu\|_{L^p(\mathbb{R}^n)},\] and $p_{Q}^{*}:=\frac{pQ}{Q-p}$ is the critical Sobolev exponent.
\end{corollary}

\begin{remark}
Inequality \eqref{1-12} extends the Carnot-group Sobolev inequality \eqref{1-2} to homogeneous H\"{o}rmander vector fields on $\mathbb R^n$, without assuming an underlying group law.
\end{remark}

We now turn to the optimal Sobolev constant. Let $C_{0}=C_{0}(X,p)>0$ be the optimal constant  in the Sobolev inequality \eqref{1-12}:
\begin{equation}\label{1-13}
  C_{0}:=\inf_{u\in \mathcal{M}_{X,0}^{p,Q}(\mathbb{R}^n),~\|u\|_{L^{p_{Q}^{*}}(\mathbb{R}^n)}=1}\int_{\mathbb{R}^n}|Xu|^{p}dx.
\end{equation}
The following theorem is the compactness theorem behind the existence of extremals.

\begin{theorem}
\label{thm4}
Let $X=(X_1,\ldots,X_m)$ be a family of homogeneous H\"{o}rmander vector fields on $\mathbb R^n$ satisfying conditions (\hyperref[H1]{H.1}) and (\hyperref[H2]{H.2}).  Assume that $Q\geq 3$ and $1<p<Q$.  Let
\[
   H:=\{x\in\mathbb R^n\mid \nu(x)=Q\}
\]
be the maximal level set of the pointwise homogeneous dimension $\nu(x)$.  Then $H$ is a smooth embedded submanifold of $\mathbb R^n$, and there exists a map
\[
   T:H\times\mathbb R^n\to\mathbb R^n,
\]
depending only on $X$, such that:
\begin{enumerate}[(1)]
 \item $T\in C^\infty(H\times\mathbb R^n)$;
 \item for each $w\in H$, the map $T(w,\cdot)$ belongs to the group $\mathcal G$ (see \eqref{group-G}  above) and satisfies $T(w,0)=w$.
\end{enumerate}
Moreover, for every minimizing sequence $\{v_k\}_{k=1}^{\infty}\subset \mathcal M_{X,0}^{p,Q}(\mathbb R^n)$ for the variational problem \eqref{1-13} satisfying
\[
  \|v_k\|_{L^{p_Q^*}(\mathbb R^n)}=1,
  \qquad
  \|Xv_k\|_{L^p(\mathbb R^n)}^p\to C_0,
\]
there exists a sequence $\{(w_k,\rho_k)\}_{k=1}^{\infty}\subset H\times(0,\infty)$ such that the sequence $\{v_k^{w_k,\rho_k}\}_{k=1}^{\infty}$ is relatively compact in $\mathcal M_{X,0}^{p,Q}(\mathbb R^n)$, where
\[
  v^{w,\rho}(x):=\rho^{\frac{Q-p}{p}}v(T(w,\delta_\rho(x))).
\]
\end{theorem}

Theorem \ref{thm4} is the main compactness result of the paper.  It gives compactness of normalized minimizing sequences modulo the two natural defects of compactness: homogeneous dilations and motion along $H$ through the maps $T(w,\cdot)$.  In the Carnot group case these maps are left translations; in the general homogeneous H\"{o}rmander setting they are constructed from the vector fields and may be nonlinear.

As a consequence of Theorem \ref{thm4}, we obtain the existence and qualitative properties of extremals, extending the results of Garofalo--Vassilev \cite[Theorem 6.1]{Garofalo-Vassilev-2000} and Vassilev \cite[Theorem 3.1]{Vassilev2006}.

\begin{corollary}
\label{corollary1-3}
Under the assumptions of Theorem \ref{thm4}, the critical quasilinear degenerate elliptic equation 
\begin{equation}\label{1-1-16}
\sum_{j=1}^{m}X_{j}(|Xu|^{p-2}X_{j}u)=-u^{p_{Q}^{*}-1}\qquad \mbox{in}~~\mathbb{R}^n,
\end{equation}
admits a positive least-energy weak solution (ground state) $u_0\in \mathcal{M}_{X,0}^{p,Q}(\mathbb{R}^n)\cap C(\mathbb{R}^n)$. More precisely, $u_0$ minimizes the associated energy among all nontrivial nonnegative weak solutions of \eqref{1-1-16}, and its $L^{p_Q^*}(\mathbb R^n)$-normalization is an extremal for the variational problem \eqref{1-13}. Furthermore, $u_0\in L^{q}(\mathbb{R}^n)$ for any $q\in (\frac{Q(p-1)}{Q-p},\infty]$ and 
\begin{equation}\label{decay}
  u_0(x)\approx d(x)^{\frac{p-Q}{p-1}}\qquad \mbox{as}~~~d(x)\to+\infty. 
\end{equation}
In particular, if $p=2$, then $u_0\in \mathcal{M}_{X,0}^{p,Q}(\mathbb{R}^n)\cap C^{\infty}(\mathbb{R}^n)$. 
\end{corollary}

\begin{remark}
By Proposition \ref{prop2-8}, the following statements are equivalent under conditions (\hyperref[H1]{H.1}) and (\hyperref[H2]{H.2}):
\begin{enumerate}
  \item [(1)]The level set $H=\{x\in \mathbb{R}^n\mid\nu(x)=Q\}$ coincides with the entire space $\mathbb{R}^n$.
  \item [(2)] $\mathbb R^n$ is equiregular with respect to $X$.
\end{enumerate}
\end{remark}

\begin{remark}
\label{remark1-7}
It follows from \cite{Biagi-Bofiglioli-ccm2015,Bonfiglioli2004} that, if
$\dim\operatorname{Lie}(X)=n$, then $\mathbb R^n$ can be endowed with a
Carnot group structure $\mathbb G=(\mathbb R^n,\circ)$ of step $\alpha_n$ for which $X_1,\ldots,X_m$ are left-invariant and whose Lie algebra is $\operatorname{Lie}(X_1,\ldots,X_m)$.  Since Carnot groups
are equiregular, one has $H=\mathbb R^n$.  In this case, the map $T$ in
Theorem \ref{thm4} can be chosen to be the group multiplication,
\[
  T(w,x):=w\circ x,
\]
so that $T(w,\cdot)$ is the left translation by $w$.  Consequently,
Theorem \ref{thm4} recovers Vassilev's compactness theorem for minimizing
sequences on Carnot groups \cite[Theorem 3.1]{Vassilev2006}.
\end{remark}

Finally, we record a domain-independence property of the optimal Sobolev constant.
\begin{theorem}
\label{thm5}
Assume that conditions (\hyperref[H1]{H.1}) and (\hyperref[H2]{H.2}) hold, and let $Q\geq 3$ and $1\leq p<Q$.  Let $\Omega\subset\mathbb R^n$ be an open set such that $\Omega\cap H\neq\varnothing$.  Define
\begin{equation}\label{1-15}
  \widetilde{C_{0}}(\Omega):=\inf_{u\in \mathcal{M}_{X,0}^{p,Q}(\Omega),~\|u\|_{L^{p_{Q}^{*}}(\Omega)}=1}\int_{\Omega}|Xu|^{p}dx.
\end{equation}
Then $\widetilde{C_{0}}(\Omega)=C_0$ (as defined by \eqref{1-13}).  In particular, the optimal Sobolev constant is independent of the choice of the open set $\Omega$, provided that $\Omega\cap H\neq\varnothing$.
\end{theorem}

\begin{remark}
For $p=2$, equation \eqref{1-1-16} has the Yamabe critical exponent $2_Q^*-1=\frac{Q+2}{Q-2}$. 
Accordingly, the variational problem \eqref{1-13} may be viewed as the homogeneous H\"{o}rmander model for the sub-Riemannian Yamabe theory; in the Heisenberg group, this is precisely the model
underlying the CR Yamabe problem
\cite{Jerision-Lee1987,Jerision-Lee1988}.

Theorem \ref{thm4} gives a geometric description of the loss of compactness for normalized minimizing sequences: homogeneous dilations determine the bubbling scale, whereas the volume-preserving automorphisms $T(w,\cdot)$ move concentration centers along the level set $H$. Moreover, Theorem \ref{thm5} shows that every open set intersecting $H$ has the same sharp Sobolev constant $C_0$. This identifies $H$ as the natural concentration locus for minimizing sequences.

As shown in the proof of Corollary \ref{corollary1-3}, the ground state $u_0$ has energy $\frac{1}{Q} C_0^{\frac{Q}{p}}$. 
Accordingly, this value represents the energy carried by a single ground-state bubble and is therefore a natural candidate for the local bubbling threshold in a non-equiregular sub-Riemannian Yamabe-type problem associated with the homogeneous H\"{o}rmander vector fields $X$.
\end{remark}

\subsection{Some examples}

We conclude the introduction with several examples illustrating the scope of
conditions (\hyperref[H1]{H.1})--(\hyperref[H2]{H.2}) and the possible
forms of the maps $T$.  The Heisenberg group gives the equiregular model.
The Grushin, Bony, and Martinet examples are non-equiregular but admit
ordinary translations along $H$.  The final two examples show that $T$
may instead be an affine shear or a genuinely nonlinear map.

\begin{ex}
Let $\mathbb H^N=(\mathbb R^{2N+1},\circ)$ be the Heisenberg group.  We
write points as $\xi=(x,y,z)$ and $\eta=(x',y',z')$, where
$x,y,x',y'\in\mathbb R^N$ and $z,z'\in\mathbb R$.  The group law is
\[
  \xi\circ\eta
  :=
  \bigl(
    x+x',\,y+y',\,
    z+z'-2(\langle x,y'\rangle-\langle x',y\rangle)
  \bigr),
\]
where $\langle\cdot,\cdot\rangle$ denotes the Euclidean inner product on
$\mathbb R^N$.  The standard left-invariant horizontal vector fields are
\[
  X_j=\partial_{x_j}+2y_j\partial_z,
  \qquad
  Y_j=\partial_{y_j}-2x_j\partial_z,
  \qquad j=1,\ldots,N.
\]
The associated dilations are $\delta_t(\xi)=\delta_t(x,y,z)=(tx,ty,t^2z)$, 
and the homogeneous dimension is $Q=2N+2$.  The vector fields $  X=(X_1,\ldots,X_N,Y_1,\ldots,Y_N)$ satisfy conditions (\hyperref[H1]{H.1}) and (\hyperref[H2]{H.2}).
Moreover, $H=\mathbb R^{2N+1}$, and the map in Theorem \ref{thm4} can be
chosen as $T(w,\xi)=w\circ\xi$. 
\end{ex}

\begin{ex}
\label{ex1}
Let $m,l\geq1$, let $\alpha$ be a positive even integer, and consider on
$\mathbb R^{n}$ the smooth Grushin-type vector fields
\[
  X=
  \bigl(
    \partial_{x_1},\ldots,\partial_{x_m},
    (\alpha+1)|x|^\alpha\partial_{y_1},\ldots,
    (\alpha+1)|x|^\alpha\partial_{y_l}
  \bigr),
\]
where $x\in\mathbb R^m$ and $y\in\mathbb R^l$ such that $n=m+l$. We
write points as $z=(x,y)\in \mathbb{R}^n$. Conditions
(\hyperref[H1]{H.1}) and (\hyperref[H2]{H.2}) hold for the dilations
\[
\delta_t(z)=\delta_t(x,y)=(tx,t^{\alpha+1}y).
\]
The homogeneous dimension is $Q=m+l(\alpha+1)$, and
\[
 H=\{z\in\mathbb R^{n}\mid\nu(z)=Q\}
   =\left\{z\in\mathbb{R}^{n}\mid z=(0,y),~y\in \mathbb{R}^{l}\right\}. \]
For $w\in H$, the map in Theorem \ref{thm4} can be chosen as $T(w,z)=z+w$. 
\end{ex}

\begin{remark}
By Theorem \ref{thm4}, the critical Grushin $p$-Sobolev constant is attained for all $m,l\geq 1$, all positive even integers $\alpha$, and $1<p<Q$, where $Q=m+l(\alpha+1)$. In particular, when $p=2$, the infimum $c_{m,l,\alpha}$ in \eqref{1-4} is attained.

The initial version of this paper (\href{https://arxiv.org/abs/2506.16125v1}{arXiv:2506.16125v1}, Theorem \ref{thm4}) first established the relative compactness of normalized minimizing sequences, modulo homogeneous dilations and translations by elements of $H$, for a class of homogeneous H\"{o}rmander vector fields including the smooth Grushin fields. Thus, in the Grushin case, where $H=\{z\in\mathbb{R}^{n}\mid z=(0,y),~y\in \mathbb{R}^{l}\}$, that theorem already yielded attainment for every positive even integer $\alpha$. This result was subsequently cited by Gandal--Loiudice--Tyagi \cite{Gandal-Loiudice-Tyagi2026}. Their work concerns only the Grushin setting, where they extended the attainment conclusion to every real number $\alpha>0$ and $1<p<Q$. Their proof follows the compactness argument developed in  \href{https://arxiv.org/abs/2506.16125v1}{arXiv:2506.16125v1}. In particular, the key localization step in their proof, which shows that any possible concentration point $z_0$ must lie in $H$, is the Grushin specialization of the argument given there. Once this is established, the half-mass normalization, implemented by homogeneous dilations and translations in the $y$-variables, rules out the remaining point-concentration alternative. 
\end{remark}

\begin{ex}
Let $n\geq2$ and consider the Bony-type vector fields on $\mathbb R^n$
\cite{Bony1969}:
\[
  X_1=\partial_{x_1},
  \qquad
  X_2=x_1\partial_{x_2}
      +x_1^2\partial_{x_3}
      +\cdots
      +x_1^{n-1}\partial_{x_n}.
\]
They satisfy conditions (\hyperref[H1]{H.1}) and
(\hyperref[H2]{H.2}) for the dilations
\[
  \delta_t(x)=\delta_t(x_1,\ldots,x_n)
  =(tx_1,t^2x_2,\ldots,t^nx_n).
\]
The homogeneous dimension is $ Q=\frac{n(n+1)}{2}$, 
and 
\[ H=\{x\in\mathbb R^n\mid\nu(x)=Q\}=\left\{x\in\mathbb{R}^{n}\mid x=(0,x_{2},\ldots,x_n)\right\}.\] 
For $w\in H$, one may choose
$T(w,x)=x+w$. 
  \end{ex}

\begin{ex}
Consider the Martinet-type vector fields on $\mathbb R^3$:
\[
  X_1=\partial_{x_1},
  \qquad
  X_2=\partial_{x_2}+x_1^2\partial_{x_3}.
\]
They satisfy conditions (\hyperref[H1]{H.1}) and
(\hyperref[H2]{H.2}) for the dilations
\[
  \delta_t(x)=\delta_t(x_1,x_2,x_3)=(tx_1,tx_2,t^3x_3).
\]
The homogeneous dimension is $Q=5$, and
\[  H=\{x\in \mathbb{R}^{3}\mid\nu(x)=Q\}=\left\{x\in\mathbb{R}^{3}\mid x=(0,x_{2},x_3)\right\}. \] 
For $w\in H$, one may again choose $T(w,x)=x+w$. 
\end{ex}

\begin{ex}
Consider the vector fields on $\mathbb R^4$:
\[
  X_1=\partial_{x_1},
  \qquad
  X_2=x_1\partial_{x_2}+x_1x_2\partial_{x_3},
  \qquad
  X_3=x_1^3\partial_{x_3},
  \qquad
  X_4=x_1^3\partial_{x_4}.
\]
They satisfy conditions (\hyperref[H1]{H.1}) and
(\hyperref[H2]{H.2}) for the dilations
\[
  \delta_t(x)=\delta_t(x_1,x_2,x_3,x_4)
  =(tx_1,t^2x_2,t^4x_3,t^4x_4).
\]
The homogeneous dimension is $Q=11$, and
\[  H=\{x\in\mathbb R^4\mid\nu(x)=Q\}= \{x\in \mathbb{R}^4\mid x=(0,x_2,x_3,x_4)\}.
\]
For $w=(0,w_2,w_3,w_4)\in H$, the map in Theorem \ref{thm4} may be
chosen as
\[
  T(w,x)
  =
  (x_1,x_2+w_2,x_3+w_3+w_2x_2,x_4+w_4).
\]
For fixed $w$, this map is an affine shear; when $w_2\neq0$, it is not a
translation.
\end{ex}

\begin{ex}
Consider the vector fields on $\mathbb R^3$:
\[
  X_1=\partial_{x_1}-x_2^2\partial_{x_3},
  \qquad
  X_2=\partial_{x_1}+\partial_{x_2}
      +(x_1-x_2)^2\partial_{x_3}.
\]
They satisfy conditions (\hyperref[H1]{H.1}) and
(\hyperref[H2]{H.2}) for the dilations
\[
 \delta_t(x)=\delta_t(x_1,x_2,x_3)=(tx_1,tx_2,t^3x_3).
\]
The homogeneous dimension is $Q=5$, and
\[
  H=\{x\in\mathbb R^3\mid\nu(x)=Q\}
   =\{x\in \mathbb{R}^3\mid x=(0,x_2,x_3)\}.
\]
For $w=(0,w_2,w_3)\in H$, the map in Theorem \ref{thm4} may be chosen as
\[ T(w,x)=(x_1,x_2+w_2,x_3+w_3-2x_{1}x_{2}w_{2}-x_{1}w_{2}^{2}+2x_{2}^{2}w_{2}+2x_{2}w_{2}^{2}). \]
For $w_2\neq0$, the map $T(w,\cdot)$ is genuinely nonlinear.
\end{ex}

\subsection{Outline of the proofs}
We briefly describe the main ideas of the proofs.

The proof of Theorem \ref{thm1} combines global heat-kernel estimates for homogeneous H\"{o}rmander operators due to Biagi--Bramanti \cite{Biagi2020} with the global ball--box theorem of Biagi--Bonfiglioli--Bramanti \cite{Biagi-Bonfiglioli-Bramanti2019}.  These results give uniform control of subunit balls through the Nagel--Stein--Wainger polynomial
\[
   \Lambda(x,r)=\sum_I |\lambda_I(x)|r^{d(I)}.
\]
The inclusion $\mathcal I(\Omega)\subset\mathcal S(\Omega)$ follows from heat-kernel estimates and semigroup arguments in the spirit of Baudoin \cite{Baudoin2017} and Varopoulos et al. \cite{varopoulos1992book}, adapted here to arbitrary open sets.  The reverse inclusion
$\mathcal S(\Omega)\subset\mathcal I_{\rm int}(\Omega)$ is obtained by testing the Sobolev inequality on cut-off functions adapted to interior subunit balls.

Theorems \ref{thm2} and \ref{thm3} require finer information on the polynomial structure of homogeneous H\"{o}rmander vector fields.  Homogeneity gives
\[
   |B(\delta_t(x),tr)|=t^Q|B(x,r)|,
\]
while volume-preserving automorphisms preserve both the subunit distance and Lebesgue measure.  The homogeneous components of the Nagel--Stein--Wainger polynomial then control the lower volume growth uniformly over a domain.  This yields the interval structure of $\mathcal I(\Omega)$ and $\mathcal S(\Omega)$.  The quantities $\nu(x)$, $\tilde\nu$, and $\nu_{\rm sing}$ describe, respectively, the pointwise homogeneous dimension, the largest such dimension on the domain, and the singular geometry that can be approached at infinity.  The corkscrew condition turns the presence of a point with dimension $\tilde\nu$ into an obstruction to any better Sobolev exponent.

The proof of Theorem \ref{thm4} has a geometric part and an analytic part.  The geometric part is the construction of the maps $T$ in Proposition \ref{prop4-1}.  The Lie algebra generated by the homogeneous H\"{o}rmander vector fields is finite-dimensional and nilpotent, and integrates to a simply connected nilpotent Lie group acting smoothly on $\mathbb R^n$.  By studying the stabilizer of the origin, its normalizer, and the associated quotient, we identify
$ H=\{x\in\mathbb R^n\mid\nu(x)=Q\}$ as an embedded smooth submanifold and construct, for every $w\in H$, a volume-preserving automorphism $T(w,\cdot)$ satisfying $T(w,0)=w$.  Consequently, the required replacement for left translations is derived from the algebraic structure of the vector fields.

The analytic part is a concentration-compactness argument in
$\mathcal M_{X,0}^{p,Q}(\mathbb R^n)$.  Given a minimizing sequence, we choose $w_k\in H$ and $\rho_k>0$ so that a fixed amount of its $L^{p_Q^*}$ mass lies in $B(w_k,\rho_k)$, and then recenter and rescale it by
\[
   v_k(x)\mapsto
   \rho_k^{\frac{Q-p}{p}}
   v_k\bigl(T(w_k,\delta_{\rho_k}(x))\bigr).
\]
Since $T(w_k,\cdot)$ and $\delta_{\rho_k}$ preserve the critical Sobolev quotient, the normalized sequence remains minimizing.  A refined concentration-compactness argument, together with the local Sobolev inequalities established in \cite{chen-chen-li2024}, rules out loss of mass at infinity and point concentration.  Local higher-integrability excludes point concentration away from $H$, whereas the half-mass normalization rules out concentration at a single 
point of $H$. Consequently, no point mass can occur, and the normalized sequence converges strongly to an extremal.

The same construction is used in the proof of Theorem \ref{thm5}.  A compactly supported near-minimizer for the whole-space constant is moved by $T(w,\cdot)$ to a point $w\in\Omega\cap H$ and then dilated into a subunit ball compactly contained in $\Omega$.  This gives
$\widetilde C_0(\Omega)\leq C_0$, while the reverse inequality follows immediately from the inclusion of the corresponding test spaces.

\subsection{Organization of the paper}

The remainder of the paper is organized as follows.  Section \ref{Section2} collects the necessary preliminaries on homogeneous H\"{o}rmander vector fields, Carnot--Carath\'eodory geometry, and the function spaces used below.  Section \ref{Section3} analyzes the sets $\mathcal I(\Omega)$ and $\mathcal S(\Omega)$ and establishes the results relating volume growth to Sobolev inequalities.  Section \ref{Section4} treats the critical variational problem, including the construction of the maps $T$, the compactness of minimizing sequences, and the domain-independence of the optimal Sobolev constant.

\subsection{Notation and terminology}

Throughout this paper,  $f(x)\approx g(x)$ indicates that
$C^{-1}g(x)\leq f(x)\leq Cg(x)$, where $C>0$ is a constant independent of the relevant variables in $f(x)$ and $g(x)$.  Moreover, for simplicity, different positive constants are usually denoted by $C$ sometimes without indices.

\section{Preliminaries}
\label{Section2}

\subsection{Basic settings for H\"{o}rmander vector fields}
Let $X=(X_1,\ldots,X_m)$ be a family of smooth vector fields on $\mathbb R^n$ satisfying H\"{o}rmander's condition (\hyperref[H0]{H.0}) with index $r$.   For each $x\in \mathbb{R}^n$ and $1\leq j\leq r$, let $V_{j}(x)$ be the subspace of $T_{x}(\mathbb{R}^n)$ spanned by all commutators of $X_{1},\ldots,X_{m}$ with length at most $j$ and set $V_0(x):=\{0\}$. Then, these subspaces form a flag of sub-Riemannian structure at $x$, i.e., 
\[ V_{0}(x)\subset V_{1}(x)\subset \cdots \subset V_{r(x)-1}(x)\subsetneq V_{r(x)}(x)=T_{x}(\mathbb{R}^n), \] 
where $r(x):=\min\{j\mid V_j(x)=T_x\mathbb R^n\}\leq r$  is called the degree of nonholonomy at $x$. Define $\nu_{j}(x):=\dim V_{j}(x)$ for $1\leq j\leq r$, with $\nu_{0}(x):=0$. The pointwise homogeneous dimension at $x$ is defined as
\begin{equation}\label{2-1}
\nu(x):=\sum_{j=1}^{r(x)}j(\nu_{j}(x)-\nu_{j-1}(x)).
\end{equation}
Note that $\nu(\cdot):\mathbb{R}^n\to \mathbb{N}^{+}$ is upper semi-continuous, whereas for each $1\leq j\leq r$,  the function $\nu_{j}(\cdot):\mathbb{R}^n\to \mathbb{N}^{+}$ is  lower semi-continuous (see \cite[p. 20]{Jean2014}).

A point $x\in\mathbb R^n$ is called regular if, for every $1\leq j\leq r$, the function $\nu_j$ is constant in a neighborhood of
$x$; otherwise, $x$ is called singular.  An open set $U\subset\mathbb R^n$ is called equiregular if every point of $U$ is
regular; otherwise, it is called non-equiregular. In the PDE literature, equiregularity is also known as the M\'etivier condition, introduced by M\'etivier in \cite{Metivier1976}. If $U$ is connected and equiregular, then $\nu(x)$ is constant on $U$; its common value, denoted by $\nu$, equals the Hausdorff dimension of $U$ with respect to the Carnot--Carath\'eodory metric and is often called the M\'etivier index in PDE.

Let $\Omega\subset\mathbb R^n$ be a nonempty open set.  We define its
non-isotropic dimension by
\begin{equation}\label{2-3}
  \tilde{\nu}:=\max_{x\in \overline{\Omega}}\nu(x).
\end{equation}
This quantity records the largest pointwise homogeneous dimension attained on $\overline\Omega$ and is one of the dimensional parameters governing the volume growth of subunit balls and the associated Sobolev inequalities. It has also been referred to as the generalized M\'etivier index in the PDE literature; see \cite{Yung2015,chen-chen2019,chen-chen2020}.

On the other hand, the H\"{o}rmander vector fields $X$ induce the following intrinsic metric:
\begin{definition}[Subunit metric]
\label{def2-1}
For any $x,y\in \mathbb{R}^n$ and $\delta>0$, let $C(x,y,\delta)$ be the set
of absolutely continuous curves $\varphi:[0,1]\to \mathbb{R}^n$ such that
$\varphi(0)=x$, $\varphi(1)=y$, and 
\[ \varphi'(t)=\sum_{i=1}^{m}a_{i}(t)X_{i}(\varphi(t)),\qquad\sum_{k=1}^{m}|a_{k}(t)|^2\leq \delta^2\]
 for a.e. $t\in [0,1]$. The subunit metric is 
    \[ d(x,y):=\inf\{\delta>0 \mid \exists \varphi\in C(x,y,\delta)\}. \]
\end{definition}
The Chow--Rashevskii theorem (see \cite[Theorem 57]{Bramanti2014}) ensures the well-definedness of the subunit metric.  Given any $x\in \mathbb{R}^n$ and $r>0$, we denote by
    \[ B(x,r):=\{y\in \mathbb{R}^n~|~d(x,y)<r\} \]
 the subunit ball associated with subunit metric $d$.

\subsection{Related results on homogeneous H\"{o}rmander vector fields}

We  begin with a brief review of the definitions and properties of $\delta_t$-homogeneous functions and vector fields. One can refer to \cite{Bonfiglioli2007} for a more detailed discussion.

\begin{definition}[$\delta_t$-homogeneous function]
\label{def2-4}
A real-valued function $f$ defined on $\mathbb{R}^n$ is said to be
 $\delta_{t}$-homogeneous of degree $\sigma\in \mathbb{R}$ if $f\not\equiv 0$ and satisfies 
\[ f(\delta_{t}(x))=t^{\sigma}f(x)\qquad\forall~ x\in \mathbb{R}^{n},~t>0. \]  
\end{definition}

Definition \ref{def2-4} implies that any non-zero function $f\in C(\mathbb{R}^n)$ which is $\delta_{t}$-homogeneous of degree $\sigma$ must have a non-negative degree $\sigma\geq 0$. Continuous functions that are $\delta_{t}$-homogeneous of degree $0$ are exactly the non-zero constants (see \cite[p. 33]{Bonfiglioli2007}). Additionally, as shown in \cite[Proposition 1.3.4]{Bonfiglioli2007}, a function $f\in C^{\infty}(\mathbb{R}^{n})$ is $\delta_{t}$-homogeneous of degree $\sigma\in \mathbb{N}$ if and only if it is a polynomial function of the form
\begin{equation}\label{2-4}
  f(x)=\sum_{\sum_{i=1}^{n}\alpha_{i}\beta_{i}=\sigma}c_{\beta_{1},\ldots,\beta_{n}}x_{1}^{\beta_{1}}x_{2}^{\beta_{2}}\cdots x_{n}^{\beta_{n}},
\end{equation}
where  $\beta_{1},\beta_{2},\ldots,\beta_{n}$ are non-negative integers, and $c_{\beta_{1},\ldots,\beta_{n}}\neq 0$ for some $\beta_{1},\ldots,\beta_{n}$.

\begin{definition}[$\delta_t$-homogeneous vector field]
\label{def2-5}
A non-zero smooth vector field $Y$ on $\mathbb R^n$ is said to be
$\delta_t$-homogeneous of degree $\sigma\in\mathbb R$ if
\[ Y(\varphi\circ\delta_t)(x)=t^{\sigma}(Y\varphi)(\delta_{t}(x))\qquad \forall~ \varphi\in C^{\infty}(\mathbb{R}^{n}),~ x\in \mathbb{R}^{n},~ t>0. \]
\end{definition}

Smooth $\delta_{t}$-homogeneous vector fields have the following properties.

\begin{proposition}[{\cite[Proposition 1.3.5, Remark 1.3.7]{Bonfiglioli2007}}]
\label{prop2-2}
Let $Y$ be a smooth non-zero vector field on $\mathbb{R}^{n}$ given by
\[ Y=\sum_{j=1}^{n}\mu_{j}(x)\partial_{x_{j}}. \]
Then $Y$ is $\delta_{t}$-homogeneous of degree $\sigma\in \mathbb{N}$ if and only if each $\mu_{j}\not\equiv 0$ is a $\delta_{t}$-homogeneous polynomial of degree $\alpha_{j}-\sigma$, as in \eqref{2-4}. Moreover, for $\mu_{j}\not\equiv 0$, we have $\alpha_{j}\geq\sigma$, and $Y$ can be written as
\[ Y=\sum_{j\leq n,~ \alpha_{j}\geq \sigma}\mu_{j}(x)\partial_{x_{j}}. \]
In particular, if $\sigma\geq 1$, since each $\mu_{j}\not\equiv 0$ is a $\delta_{t}$-homogeneous polynomial function of degree $\alpha_{j}-\sigma$, \eqref{2-4} implies that $\mu_{j}(x)=\mu_{j}(x_{1},\ldots,x_{j-1})$.
\end{proposition}

For the homogeneous H\"{o}rmander vector fields, we have the following propositions.

\begin{proposition}[{\cite[Proposition 2.5]{chen-chen-li-2022}}]
\label{prop2-3}
Let $X=(X_{1},\ldots,X_{m})$ be homogeneous H\"{o}rmander vector fields satisfying
conditions (\hyperref[H1]{H.1}) and (\hyperref[H2]{H.2}). Then, $X$ satisfy the H\"{o}rmander condition (\hyperref[H0]{H.0}) on $\mathbb{R}^n$ with H\"{o}rmander index $\alpha_n$.
\end{proposition}

Moreover, the subunit metric $d$ is invariant under the volume-preserving automorphisms of $X$.

\begin{proposition}
\label{prop2-4}
For any $\mathscr{A}\in \mathcal{G}$ and $x,y\in \mathbb{R}^n$, we have
 $d(\mathscr{A}(x),\mathscr{A}(y))=d(x,y)$.  
\end{proposition}

\begin{proof}
For $x,y\in \mathbb{R}^n$ and $\delta>0$, let $C(x,y,\delta)$ the set of absolutely continuous curves $\varphi:[0,1]\to \mathbb{R}^n$, which satisfies $\varphi(0)=x,\varphi(1)=y$ and
$ \varphi'(t)=\sum_{i=1}^{m}a_{i}(t)X_{i}(\varphi(t)) $
with $\sum_{k=1}^{m}|a_{k}(t)|^2\leq \delta^2$ for a.e. $t\in [0,1]$. Now, for any $\mathscr{A}\in \mathcal{G}$, it follows from \eqref{eq-1-15} that 
\begin{equation}\label{eq-2-5}
 J_{\mathscr{A}}(x)\cdot X_{i}(x)=X_{i}(\mathscr{A}(x))\qquad \forall~ x\in \mathbb{R}^n,
\end{equation}
where $J_{\mathscr{A}}(x)$ denotes the Jacobian matrix of $\mathscr{A}$ at $x$. Let $\psi(t):=\mathscr{A}(\varphi(t))$.
 Then, $\psi(0)=\mathscr{A}(x),\psi(1)=\mathscr{A}(y)$, and \eqref{eq-2-5} gives that
\[\begin{aligned}
 \psi'(t)&=J_{\mathscr{A}}(\varphi(t))\cdot\varphi'(t)=\sum_{i=1}^{m}a_{i}(t)J_{\mathscr{A}}(\varphi(t))\cdot X_{i}(\varphi(t))\\
 &=\sum_{i=1}^{m}a_{i}(t)X_{i}(\mathscr{A}(\varphi(t)))=\sum_{i=1}^{m}a_{i}(t)X_{i}(\psi(t)),
 \end{aligned} \]
with $\sum_{k=1}^{m}|a_{k}(t)|^2\leq \delta^2$ for a.e. $t\in [0,1]$. Thus, $\mathscr{A}(\varphi)\in C(\mathscr{A}(x),\mathscr{A}(y),\delta)$, which implies that  $d(\mathscr{A}(x),\mathscr{A}(y))\leq d(x,y)$. Since $\mathscr A^{-1}\in\mathcal G$, applying the same argument to $\mathscr A^{-1}$ gives $d(x,y)
  = d(\mathscr A^{-1}(\mathscr A(x)), \mathscr A^{-1}(\mathscr A(y)))\leq d(\mathscr A(x),\mathscr A(y))$. Hence, $d(\mathscr A(x),\mathscr A(y))=d(x,y)$.
\end{proof}

\begin{proposition}[{\cite[(2.2)]{Biagi-Bonfiglioli-Bramanti2019}}]
 \label{prop2-5} 
 For the homogeneous H\"{o}rmander vector fields $X$, the subunit metric  and subunit ball satisfy the following properties:
   \begin{enumerate}[(1)]
   \item For any $x,y\in \mathbb{R}^n$ and $t>0$, $d(\delta_{t}(x),\delta_{t}(y))=td(x,y)$.
\item For any $x,y\in \mathbb{R}^n$ and $t,r>0$, $y\in B(x,r)$ if and only if $\delta_{t}(y)\in B(\delta_{t}(x),tr)$.
\item For any $t,r>0$ and $x\in \mathbb{R}^n$, $|B(\delta_{t}(x),tr)|=t^{Q}|B(x,r)|$. In particular, $|B(0,t)|=|B(0,1)|t^Q$ for $t>0$.
\end{enumerate}
\end{proposition}

We then recall some notations from \cite{NSW85} to describe the volume of the subunit ball. For $1\leq j_{i}\leq m$, let $J=(j_{1},\ldots,j_{k})$ denote a multi-index of length
$|J|=k$. The associated commutator $X_{J}$ is defined as
\[ X_{J}:=[X_{j_{1}},[X_{j_{2}},\ldots[X_{j_{k-1}},X_{j_{k}}]\ldots]]. \]
For $k\geq 1$, let $X^{(k)}:=\{X_{J}|J=(j_{1},\ldots,j_{k}),~1\leq j_{i}\leq m, |J|=k \}$ denote the set of all commutators of length $k$. Enumerating the elements of $X^{(1)},\ldots,X^{(\alpha_{n})}$, we denote them by $Y_{1},\ldots,Y_{q}$, and assign each  $Y_{i}$ a formal degree $d(Y_{i})=k$ if $Y_{i}\in X^{(k)}$. 

For each $n$-tuple of integers $I=(i_{1},\ldots,i_{n})$ with $1\leq i_{j}\leq q$, we define
\begin{equation}\label{2-5}
\lambda_{I}(x):=\det(Y_{i_{1}},\ldots,Y_{i_{n}})(x),
\end{equation}
where $\det(Y_{i_{1}},\ldots,Y_{i_{n}})(x)=\det(b_{jk}(x))$ with $Y_{i_{j}}=\sum_{k=1}^{n}b_{jk}(x)\partial_{x_{k}}$. We also set
\begin{equation*}
d(I):=d(Y_{i_{1}})+\cdots+d(Y_{i_{n}}),
\end{equation*}
and
\begin{equation}\label{2-6}
  \Lambda(x,r):=\sum_{I}|\lambda_{I}(x)|r^{d(I)},
\end{equation}
where the sum is taken over all $n$-tuples. The function $\Lambda(x,r)$, known as  the Nagel--Stein--Wainger polynomial, describes the volume of subunit balls.

\begin{proposition}[Global Ball--Box theorem, see {\cite[Theorem B]{Biagi-Bonfiglioli-Bramanti2019}}]
\label{prop2-6}
Assume conditions (\hyperref[H1]{H.1}) and (\hyperref[H2]{H.2}) hold. Then there exist constants $C_1,C_2>0$,
depending only on $X$, such that
\begin{equation}\label{2-7}
  C_{1}\Lambda(x,r)\leq |B(x,r)|\leq C_{2}\Lambda(x,r),
\end{equation}
for every $ x\in \mathbb{R}^n$ and  $r>0$, where $\Lambda(x,r)$ is the Nagel--Stein--Wainger polynomial defined in \eqref{2-6}. Furthermore, 
\begin{equation}\label{2-8}
    \Lambda(x,r)=\sum_{k=n}^{Q}f_k(x)r^k,
  \end{equation}
where  $f_{k}(x)=\sum_{d(I)=k}|\lambda_I(x)|$ is a non-negative, continuous function satisfying $f_{k}(\delta_t(x))=t^{Q-k}f_{k}(x)$ for all $x\in \mathbb{R}^n$ and  $t>0$. Additionally,  $f_Q(x)\equiv f_{Q}(0)>0$. 
\end{proposition}

Proposition \ref{prop2-6} yields the following global doubling property.
\begin{proposition}[Global doubling property]
\label{prop2-7}
Assume conditions (\hyperref[H1]{H.1}) and (\hyperref[H2]{H.2}) hold. Then there exists a constant $C_3>0$, depending only on $X$, such that
\begin{equation}\label{2-9}
 |B(x,r_{2})|\leq C_{3}\left(\frac{r_{2}}{r_{1}} \right)^{Q}|B(x,r_{1})|
\end{equation}
for every $x\in\mathbb R^n$ and $0<r_1<r_2$.
\end{proposition}

We next present a key proposition related to the pointwise homogeneous  dimension and the Nagel--Stein--Wainger polynomial.

\begin{proposition}
\label{prop2-8}
Assume conditions (\hyperref[H1]{H.1}) and (\hyperref[H2]{H.2}) hold. Then for each $x\in \mathbb{R}^n$, the pointwise homogeneous dimension $\nu(x)$ satisfies
\begin{equation}\label{2-10}
  \nu(x)=\sum_{j=1}^{r(x)}j(\nu_{j}(x)-\nu_{j-1}(x))=\lim_{r\to 0^{+}}\frac{\log\Lambda(x,r)}{\log r}=\min\{d(I)|\lambda_{I}(x)\neq 0\},
\end{equation}
and $\nu(x)\leq \nu(0)=Q$ for all $x\in \mathbb{R}^n$. Moreover, for every $\mathscr A\in\mathcal G$, $x\in\mathbb R^n$, and $t>0$,
\begin{equation}\label{2-11}
\Lambda(\mathscr{A}(x),t)=\Lambda(x,t),
\end{equation}
and
\begin{equation}\label{2-12}
\begin{aligned}
  \nu_{j}(x)&=\nu_{j}(\delta_{t}(x))=\nu_{j}(\mathscr{A}(x))~~~\forall~ 1\leq j\leq \alpha_n,\\
 \nu(x)&=\nu(\delta_{t}(x))=\nu(\mathscr{A}(x)).
\end{aligned}
\end{equation}
Moreover, if $ H=\{x\in\mathbb R^n\mid\nu(x)=Q\}=\mathbb R^n$, 
then the sub-Riemannian structure generated by $X$ is equiregular on
$\mathbb R^n$, and its pointwise homogeneous dimension is identically
equal to $Q$.
\end{proposition}
\begin{proof}
Identity \eqref{2-10} follows from \cite[Proposition 2.2]{chen-chen2019}. Combining  \eqref{2-8} and \eqref{2-10} shows that $\nu(x)\leq \nu(0)=Q$ for all $x\in \mathbb{R}^n$.  

Let $\mathscr{A}\in \mathcal{G}$. By \eqref{eq-1-15} we have 
\[   Y(f(\mathscr{A}(\cdot)))=(Yf)(\mathscr{A}(\cdot))\]
 for all $Y\in X^{(k)}$, $f\in C^{\infty}(\mathbb{R}^n)$ and $1\leq k\leq \alpha_n$. This means
\begin{equation}\label{eq-2-14}
   J_{\mathscr{A}}(x)\cdot Y(x)=Y(\mathscr{A}(x))\qquad \forall~ x\in \mathbb{R}^n,~~Y\in X^{(k)},~~ 1\leq k\leq \alpha_n.
\end{equation}
Here, $J_{\mathscr{A}}(x)$ denotes the Jacobian matrix of $\mathscr{A}$ at $x$. It then follows from \eqref{2-5} and \eqref{2-6} that $|\lambda_I(\mathscr{A}(x))|=|\lambda_I(x)|$ and  consequently  $\Lambda(\mathscr{A}(x),t)=\Lambda(x,t)$ for any $t>0$. Furthermore,  \eqref{2-10}  derives that $\nu(x)=\nu(\mathscr{A}(x))$.

Let $V_{j}(x)={\rm span}~\{Y(x)\mid Y\in X^{(k)},~1\leq k\leq j\}\subset T_{x}(\mathbb{R}^n)$ and define $\nu_j(x)=\dim V_j(x)$. The pushforward map $(d\delta_{t})_{x}$, induced by the dilation $\delta_{t}(x)$, is a  linear isomorphism from $T_{x}(\mathbb{R}^n)$ to $T_{\delta_{t}(x)}(\mathbb{R}^n)$. By \cite[Proposition 2.3]{chen-chen-li-2022} every $Y\in X^{(k)}$ with $Y\not\equiv 0$ is  $\delta_{t}$-homogeneous of degree $k$, and $(d\delta_{t})_{x}(Y(x))=t^{k}Y(\delta_{t}(x))$. This implies that  $(d\delta_{t})_{x}V_{j}(x)\subset V_{j}(\delta_{t}(x))$ and $\nu_{j}(x)\leq \nu_{j}(\delta_{t}(x))$. Replacing $\delta_{t}$ by $\delta_{\frac{1}{t}}$ yields $\nu_{j}(\delta_{t}(x))\leq \nu_{j}(\delta_{\frac{1}{t}}(\delta_{t}(x)))=\nu_{j}(x)$, thus establishing that $\nu_{j}(x)=\nu_{j}(\delta_{t}(x))$ for $1\leq j\leq \alpha_n$. As a result of \eqref{2-1}, we conclude that $\nu(x)=\nu(\delta_t(x))$.

Similarly, from \eqref{eq-2-14} we see that the pushforward map $(d\mathscr{A})_{x}$, induced by the map $\mathscr{A}$, is a  linear isomorphism from $T_{x}(\mathbb{R}^n)$ to $T_{\mathscr{A}(x)}(\mathbb{R}^n)$ such that $ (d\mathscr{A})_{x}(Y(x))= J_{\mathscr{A}}(x)\cdot Y(x)=Y(\mathscr{A}(x))$
 for all $x\in \mathbb{R}^n$, $Y\in X^{(k)}$ and $1\leq k\leq \alpha_n$. Consequently, $\nu_{j}(\mathscr{A}(x))=\nu_{j}(x)$ and $\nu(\mathscr{A}(x))=\nu(x)$.

To complete the proof, we show that  if $H=\{x\in \mathbb{R}^n|\nu(x)=Q\}=\mathbb{R}^n$, then $\nu_{j}(x)=\nu_{j}(0)$ for all $x\in \mathbb{R}^n$ and $1\leq j\leq \alpha_n$. To this end,  define the set
\[ S:=\{x\in \mathbb{R}^n\mid \nu_{j}(x)=\nu_{j}(0),~~1\leq j\leq \alpha_n\}. \]
Note that $\nu_{j}(x)=n$ for $r(x)\leq j\leq \alpha_n$, and $0\in S$. We now employ a standard topological argument to show that $S=\mathbb{R}^n$, which indicates that the sub-Riemannian structure generated by $X$ is equiregular on
$\mathbb R^n$, and $\nu(x)\equiv Q$.

 For any $x_0\in S$, $\nu_{j}(x_0)=\nu_{j}(0)$ for $1\leq j\leq \alpha_n$. Since $\nu_j(x)$ is an integer-valued  lower semi-continuous function, there exists an open neighborhood $U_j$ of $x_0$ such that $\nu_j(x)\geq \nu_j(x_0)$ for all $x\in U_j$. Let $U=\bigcap_{j=1}^{\alpha_n}U_j$ be an open neighborhood of $x_0$. It follows that 
\begin{equation}\label{eq-2-15}
  \nu_j(x)\geq \nu_j(x_0)\qquad \forall~ x\in U,~1\leq j\leq \alpha_n.
\end{equation}
Combining the fact $\nu(x)\equiv\nu(0)$ and \eqref{2-1}, we deduce that
\[ \alpha_n (\nu_{\alpha_n}(x)-\nu_{\alpha_n}(0))-\sum_{j=1}^{\alpha_n-1}(\nu_{j}(x)-\nu_{j}(0))=0\qquad \forall~ x\in \mathbb{R}^n. \]
Observing that $\nu_{\alpha_n}(x)=\nu_{\alpha_n}(0)=n$ for all $x\in \mathbb{R}^n$, we have
\begin{equation}\label{eq-2-16}
  \sum_{j=1}^{\alpha_n-1}(\nu_{j}(x)-\nu_{j}(0))=0\qquad \forall~ x\in \mathbb{R}^n.
\end{equation}
Thus, \eqref{eq-2-15} and \eqref{eq-2-16} give that $\nu_j(x)= \nu_j(x_0)$ for all $x\in U$ and $1\leq j\leq \alpha_n$. This means, $S$ is an open subset of $\mathbb{R}^n$.

Let $\{x_k\}_{k=1}^{\infty}\subset S$ and $x_k\to x_0$ as $k\to \infty$. It follows that $\nu_j(x_k)=\nu_j(0)$ for $k\geq 1$ and $j=1,\ldots,\alpha_n$. The lower semi-continuity of $\nu_j(x)$ implies that 
\begin{equation}\label{eq-2-17}
\nu_j(x_0)\leq \liminf_{k\to\infty}\nu_j(x_k)=\nu_j(0)\qquad\forall~ j=1,\ldots,\alpha_n.
\end{equation}
According to \eqref{eq-2-16} and \eqref{eq-2-17}, we conclude that $\nu_j(x_0)=\nu_j(0)$ for $1\leq j\leq \alpha_n$. Therefore, $x_0\in S$, and $S$ is a closed subset of $\mathbb{R}^n$. Consequently, $S=\mathbb{R}^n$.
\end{proof}

Let $\Omega\subset\mathbb R^n$ be an open set and write $d(x):=d(x,0)$. If $\Omega$ is unbounded, we define its
asymptotic projection onto the unit sphere associated with the subunit metric $\partial B(0,1) := \{x \in \mathbb{R}^n \mid d(x)=1\}$ by
\begin{equation}
  \Pi_\infty(\Omega) := \bigcap_{R>0} \overline{ \left\{ \delta_{\frac{1}{d(x)}}(x) \;\middle|\; x \in \Omega, \ d(x) > R \right\} },
\end{equation}
where the closure is taken in $\partial B(0,1)$. The associated asymptotic singular dimension is defined by
\begin{equation}\label{def-asd}
  \nu_{\rm sing} := \max_{z \in \Pi_\infty(\Omega)} \nu(z).
\end{equation}
If $\Omega$ is bounded, we set
\[
  \Pi_\infty(\Omega):=\varnothing,
  \qquad
  \nu_{\rm sing}:=-\infty.
\]
By Proposition \ref{prop2-9}, the sphere $\partial B(0,1)$ is compact, which indicates that $\Pi_\infty(\Omega)$ is compact. Since $\nu(\cdot):\mathbb{R}^n\to \mathbb{N}^{+}$ is upper semi-continuous, the maximum above is attained. 
 
\subsection{Function spaces associated with vector fields}

We now introduce several function spaces associated with homogeneous H\"{o}rmander vector fields that will be used throughout the paper.

Let $U\subset\mathbb R^n$ be open and let $1\leq p<\kappa$. We define
\[
 \mathcal M_X^{p,\kappa}(U)
 :=
 \left\{
 u\in L^{\frac{\kappa p}{\kappa-p}}(U)\mid
 X_j u\in L^p(U),\ 1\leq j\leq m
 \right\},
\]
where the derivatives are understood in the distributional sense.  We
endow this space with the norm
\[
  \|u\|_{\mathcal M_X^{p,\kappa}(U)}
  :=
  \|u\|_{L^{\frac{\kappa p}{\kappa-p}}(U)}
  +
  \|Xu\|_{L^p(U)}.
\]
Then $\mathcal M_X^{p,\kappa}(U)$ is a Banach space, and it is reflexive
if $1<p<\kappa$.  We denote by $\mathcal M_{X,0}^{p,\kappa}(U)$ the
closure of $C_0^\infty(U)$ in this norm.

For $p\geq 1$, we define the generalized Sobolev space by setting
\begin{equation}
\mathcal{W}_{X}^{1,p}(U):=\{u\in L^{p}(U)\mid X_{j}u\in L^{p}(U),~~ 1\leq j\leq m\}. 
\end{equation}
We equip $\mathcal W_X^{1,p}(U)$ with the norm
\[ \|u\|_{\mathcal W_X^{1,p}(U)}:=\|u\|_{L^p(U)}+\|Xu\|_{L^p(U)}.\]
The space $\mathcal W_{X,0}^{1,p}(U)$ is the closure of
$C_0^\infty(U)$ with respect to this norm. In particular,  we adopt the notations 
\begin{equation}
\mathcal{H}_{X}^{1}(U):=\mathcal{W}_{X}^{1,2}(U)\quad \mbox{and}\quad \mathcal{H}_{X,0}^{1}(U):=\mathcal{W}_{X,0}^{1,2}(U). 
\end{equation}
When  $U=\mathbb{R}^n$,  it is known (see \cite[Proposition 2.11]{chen-chen-li-2022}) that  $\mathcal{H}_{X,0}^{1}(\mathbb{R}^n)=\mathcal{H}_{X}^{1}(\mathbb{R}^n)$.

On the other hand, condition (\hyperref[H1]{H.1}) implies the compactness of closed subunit ball and the following global Poincar\'{e}--Wirtinger inequality.
\begin{proposition}
\label{prop2-9}
Assume conditions (\hyperref[H1]{H.1}) and (\hyperref[H2]{H.2}).  Then
$\overline{B(x_0,r)}$ is compact for every $x_0\in\mathbb R^n$ and
$r>0$.  Moreover, there exists a constant $C>0$, depending only on $X$,
such that
\begin{equation}\label{2-13}
  \int_{B(x_0,r)}
  \left|f(x)-\frac1{|B(x_0,r)|}\int_{B(x_0,r)}f(y)dy\right|\,dx
  \leq
  C r\int_{B(x_0,r)}|Xf|\,dx
\end{equation}
for every $f\in{\rm Lip}(\overline{B(x_0,r)})$.
\end{proposition}
\begin{proof}
Since $X$ satisfy the H\"{o}rmander's condition (\hyperref[H0]{H.0}) on $\mathbb{R}^n$,  the identity map $\iota:(\mathbb{R}^n,|\cdot|)\to (\mathbb{R}^n,d)$ is continuous (see \cite[p. 69]{Garofalo1998}). Furthermore, by \cite[Proposition 2.3]{Danielli1998}, $(\mathbb{R}^n,d)$ is locally compact. In particular, there exists $r_0>0$ such that $\overline{B(0,r_0)}$ is compact. 

Using \cite[Theorem 1]{Franchi1995}, one can find $\widetilde{r}_{0}>0$ (depending on $\overline{B(0,r_0)}$ and $X$) such that for any $z\in \overline{B(0,r_0)}$ and any $0<r\leq \widetilde{r}_{0}$  the inequality
\begin{equation}\label{2-14}
  \int_{B(z,r)}\left|g(x)-\frac{1}{|B(z,r)|}\int_{B(z,r)}g(y)dy\right|dx\leq Cr\int_{B(z,r)}|Xg(x)|dx
\end{equation}
holds for all $g\in {\rm Lip}(\overline{B(z,r)})$. Now, for any $x_0\in \mathbb{R}^n$ and $r>0$, choosing
\[
  0<t_0\leq
  \min\left\{
    \frac{\widetilde r_0}{r},
    \frac{r_0}{2(r+d(x_0,0))}
  \right\},
\]
then $\delta_{t_0}(x_0)\in B(0,r_0)$, $t_0r\leq\widetilde r_0$, and 
$ \delta_{t_{0}}(B(x_{0},r))=B(\delta_{t_{0}}(x_{0}),t_{0}r)$. 
 
For any $f\in {\rm Lip}(\overline{B(x_{0},r)})$, it follows that $f\circ \delta_{\frac{1}{t_{0}}}\in {\rm Lip}(\overline{B(\delta_{t_{0}}(x_{0}),t_{0}r)})$. Then, \eqref{2-14} gives that
\begin{equation*}
\begin{aligned}
 &\int_{B(\delta_{t_{0}}(x_{0}),t_{0}r)}\left|f(\delta_{\frac{1}{t_{0}}}(x))-\frac{1}{|B(\delta_{t_{0}}(x_{0}),t_{0}r)|}\int_{B(\delta_{t_{0}}(x_{0}),t_{0}r)}f(\delta_{\frac{1}{t_{0}}}(y))dy\right|dx\\
&\leq Ct_{0}r\int_{B(\delta_{t_{0}}(x_{0}),t_{0}r)}|X(f(\delta_{\frac{1}{t_{0}}}(x)))|dx,
\end{aligned}
\end{equation*}
which yields \eqref{2-13}. Finally, from 
$\delta_{t_{0}}(\overline{B(x_{0},r)})=\overline{B(\delta_{t_{0}}(x_{0}),t_{0}r)}\subset \overline{B(0,r_0)}$, 
we obtain that $\overline{B(x_{0},r)}$ is compact for any $x_{0}\in \mathbb{R}^n$ and $r>0$.
\end{proof}

\begin{proposition}
\label{prop2-10}
Assuming conditions (\hyperref[H1]{H.1}) and (\hyperref[H2]{H.2}), and let  $1\leq p<Q$. If $u_k\rightharpoonup 0$ in $\mathcal{M}_{X,0}^{p,Q}(\mathbb{R}^n)$, then $u_k\to 0 $ in $L_{\rm loc}^{p}(\mathbb{R}^n)$.  
\end{proposition}
\begin{proof}
It follows from  Proposition \ref{prop2-9} and \cite[Theorem 1.28]{Garofalo1996} that for any bounded $X$-Poincar\'e--Sobolev domain $U\subset \mathbb{R}^n$, the embedding $\mathcal{W}_{X}^{1,p}(U)\hookrightarrow L^{p}(U)$ is compact provided $1\leq p<Q$. 
For any $r_{0}>0$, Proposition \ref{prop2-9} and \cite[Proposition 2.5]{Danielli1998} imply that $B(0,r_0)$ is a bounded  $X$-Poincar\'e--Sobolev domain. 
If $u_k\rightharpoonup 0$ in $\mathcal{M}_{X,0}^{p,Q}(\mathbb{R}^n)$, we have $u_k\rightharpoonup 0$ in $\mathcal{W}_{X}^{1,p}(B(0,r_0))$. Since for $1\leq p<Q$, the embedding $\mathcal{W}_{X}^{1,p}(B(0,r_0))\hookrightarrow L^{p}(B(0,r_0))$ is compact, it follows that $u_k \to 0$ in $L^{p}(B(0,r_0))$. Thus,  $u_k\to 0 $ in $L_{\rm loc}^{p}(\mathbb{R}^n)$.
\end{proof}

\section{The Sobolev inequality and growth estimate of the volume of subunit ball}
\label{Section3}

In this section, we investigate the properties of the sets $\mathcal{I}(\Omega)$ and $\mathcal{S}(\Omega)$, which form the proofs of Theorems \ref{thm1}-\ref{thm3} and Corollary \ref{corollary1-1}. Our main tool is the global heat kernel of homogeneous H\"{o}rmander operator 
$\triangle_{X}=\sum_{j=1}^{m}X_{j}^2$.

\subsection{The global heat kernel of homogeneous H\"{o}rmander operator}
By Proposition \ref{prop2-2}, we see that $\operatorname{div}X_j=0$, which yields that $X_j^*=-X_j$ and $\triangle_{X}=-\sum_{j=1}^{m}X_{j}^{*}X_{j}$. Then, we have

 \begin{proposition}
\label{prop3-1}
The Friedrichs realization of the non-negative symmetric operator $-\triangle_X$ generates a heat semigroup $\{P_t\}_{t\geq0}$ admitting a unique heat kernel $h(x,y,t)$ such that, for every $f\in L^2(\mathbb R^n)$, 
\begin{equation}\label{3-1}
  P_{t}f(x)=e^{t\triangle_X}f(x)=\int_{\mathbb{R}^n}h(x,y,t)f(y)dy
\end{equation}
holds for all $x\in \mathbb{R}^n$ and $t>0$. Moreover, $h(x,y,t)\in C^{\infty}(\mathbb{R}^n\times\mathbb{R}^n\times\mathbb{R}^{+})$ and satisfies the following properties:
\begin{enumerate}[(1)]
\item For any $x,y\in\mathbb{R}^n$ and $t>0$, $h(x,y,t)=h(y,x,t)$.
\item For any $x\in\mathbb{R}^n$ and $t>0$, 
 $\int_{\mathbb{R}^n}h(x,y,t)dy=1$.
 \item For any fixed point $y\in \mathbb{R}^n$, $h(x,y,t)$ is the solution of
\begin{equation}
\label{3-2}
  \left(\frac{\partial}{\partial t}-\triangle_{X}\right)h(x,y,t)=0 \qquad \forall~~ (x,t)\in\mathbb{R}^n\times \mathbb{R}^{+}.
\end{equation}
  \item There exists $A_1\geq 1$ such that for any $x,y\in \mathbb{R}^n$ and $t>0$, we have
\begin{equation}
 \label{3-3}
    \frac{1}{A_{1}|B (x,\sqrt{t})|}e^{-\frac{A_{1} d^{2}(x,y)}{t}}\leq h(x,y,t)\leq
 \frac{A_{1}}{|B (x,\sqrt{t})|}e^{-\frac{ d^{2}(x,y)}{A_{1}t}}.
 \end{equation}
  \item There exists $A_2\geq 1$ such that for any $x,y\in \mathbb{R}^n$ and $t>0$, we have
     \begin{equation}\label{3-4}
 |X_{j}^{y}h(x,y,t)|\leq
 \frac{A_{2}}{t^{\frac{1}{2}}|B (x,\sqrt{t})|}e^{-\frac{ d^{2}(x,y)}{A_{2}t}}
 \end{equation}
 for $1\leq j\leq m$.
\item For any $f\in L^2(\mathbb{R}^n)$, the function $u(x,t)=P_{t}f(x)\in C^{\infty}(\mathbb{R}^n\times \mathbb{R}^+)$ and satisfies the
degenerate heat equation $\partial_t u=\triangle_{X}u$ on $\mathbb{R}^n\times \mathbb{R}^+$.
 \item For $f\in C_0^\infty(\mathbb{R}^n)$, we have 
 \begin{equation}\label{3-5}
\lim\limits_{t\to 0^+}P_tf(x)=f(x)\qquad \forall~ x\in \mathbb{R}^n,
 \end{equation}
 and
\begin{equation}\label{3-6}
\lim\limits_{t\to +\infty}P_tf(x)=0\qquad \forall~ x\in \mathbb{R}^n.
\end{equation} 
\end{enumerate}
\end{proposition}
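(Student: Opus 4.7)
The plan is to assemble the heat kernel's existence, regularity, and estimates by combining standard spectral-theoretic arguments with the global Gaussian bounds that Biagi--Bonfiglioli \cite{Biagi2019} and Biagi--Bramanti \cite{Biagi2020} have already established for homogeneous Hörmander operators. First I would observe that $-\triangle_{X}=\sum_{j=1}^{m}X_{j}^{*}X_{j}$ is a non-negative symmetric operator on $C_{0}^{\infty}(\mathbb{R}^{n})$. Because the subunit metric $d$ is complete (Proposition \ref{prop2-10} ensures closed subunit balls are compact, so $(\mathbb{R}^{n},d)$ is a complete, locally compact, length space), Strichartz's criterion yields essential self-adjointness. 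The spectral theorem then produces the strongly continuous, symmetric, sub-Markovian contraction semigroup $P_{t}=e^{t\triangle_{X}}$ on $L^{2}(\mathbb{R}^{n})$, from which symmetry $h(x,y,t)=h(y,x,t)$ (property (1)) is immediate once a kernel is obtained.

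To produce the kernel and verify its smoothness, I would appeal to Hörmander's hypoellipticity theorem. Proposition \ref{prop2-2} guarantees that $X_{1},\ldots,X_{m}$ satisfy condition (\hyperref[H0]{H.0}) globally, so the parabolic operator $\partial_{t}-\triangle_{X}$ is hypoelliptic on $\mathbb{R}^{n}\times\mathbb{R}^{+}$. Applying hypoellipticity to the distributional identity $(\partial_{t}-\triangle_{X})h(x,\cdot,t)=0$ (initially interpreted via the semigroup on $L^{2}$) yields $h\in C^{\infty}(\mathbb{R}^{n}\times\mathbb{R}^{n}\times\mathbb{R}^{+})$ together with the pointwise heat equation (3), and simultaneously gives property (6) for $P_{t}f$ with $f\in L^{2}$. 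The conservation identity (2) — i.e.\ stochastic completeness $\int h(x,y,t)\,dy=1$ — can then be derived from the Gaussian upper bound below via Grigor'yan's volume-growth criterion (the global doubling property of Proposition \ref{prop2-6} and $|B(x,r)|\lesssim r^{Q}$ are far more than enough), or equivalently from the Biagi--Bonfiglioli lifting to a Carnot group, where conservativeness is classical.

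For the quantitative statements (4) and (5), I would directly invoke the two-sided Gaussian heat kernel bounds of Biagi--Bonfiglioli \cite{Biagi2019} and the horizontal gradient estimate of Biagi--Bramanti \cite{Biagi2020}. The idea behind these estimates, which I would briefly recall, is to use Folland's lifting theorem to realize $X$ as the projection of a system $\widetilde{X}$ of left-invariant homogeneous Hörmander fields on a Carnot group $\mathbb{G}=\mathbb{R}^{n}\times\mathbb{R}^{p}$; the lifted heat kernel enjoys the sharp Gaussian bounds of Varopoulos--Saloff-Coste--Coulhon, and the saturation/integration procedure transfers these bounds to $h(x,y,t)$ after controlling the fiber integrals by the volume comparison between lifted and unlifted subunit balls. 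The remaining property (7) is then routine: the initial condition (3.5) follows from the contraction of $P_{t}$ on $L^{2}$ together with continuity of $h(x,\cdot,t)$ as $t\to 0^{+}$ in the sense of distributions, while the decay (3.6) is immediate from the upper bound in (4) combined with $|B(x,\sqrt{t})|\approx t^{Q/2}$ for $t$ large and the compact support of $f$.

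The main obstacle in this program is unquestionably the Gaussian two-sided bound (4), which encodes the full geometric content: off-diagonal decay in the subunit metric and on-diagonal behavior tracking the Nagel--Stein--Wainger polynomial $|B(x,\sqrt{t})|\approx \Lambda(x,\sqrt{t})$. However, since Biagi--Bonfiglioli and Biagi--Bramanti have proved precisely these estimates in our setting (conditions (\hyperref[H1]{H.1}) and (\hyperref[H2]{H.2})), this step reduces to a citation rather than new analysis, and the proposition follows by packaging their results with the standard semigroup/hypoellipticity arguments sketched above.
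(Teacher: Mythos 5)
Your proposal is correct and takes essentially the same route as the paper: the heavy lifting (existence, smoothness, symmetry, conservation, and the two-sided Gaussian and gradient bounds) is delegated to the Biagi--Bonfiglioli and Biagi--Bramanti results (the paper additionally cites \cite{chen-chen-li-2022}), and the decay \eqref{3-6} is derived from the upper Gaussian bound \eqref{3-3} together with the lower volume bound $|B(x,\sqrt{t})|\gtrsim t^{Q/2}$ from Proposition \ref{prop2-5}, giving $|P_tf(x)|\lesssim t^{-Q/2}\|f\|_{L^1(\mathbb{R}^n)}\to 0$. Your elaboration on essential self-adjointness, hypoellipticity, and Grigor'yan's criterion supplies more scaffolding than the paper's very brief citation-based proof, but the underlying strategy is identical.
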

\begin{proof}
The asserted  properties (1)-(6), \eqref{3-1} and \eqref{3-5}  follow from \cite[Section 3]{chen-chen-li-2022}, \cite[Theorem 1.4]{Biagi2019} and \cite[Theorem 2.4]{Biagi2020}. Moreover, Proposition \ref{prop2-6} gives $|B(x,\sqrt{t})|\geq C_1 f_Q(0)t^{\frac{Q}{2}}$. 
Hence, for every $f\in C_0^\infty(\mathbb R^n)$,
\[
 |P_tf(x)|
 \leq \int_{\mathbb R^n}h(x,y,t)|f(y)|dy
 \leq \frac{A_1}{C_1f_Q(0)}t^{-\frac{Q}{2}}\|f\|_{L^1(\mathbb R^n)},
\]
which implies \eqref{3-6}.
\end{proof}

\subsection{Proof of Theorem \ref{thm1}} 
The proof of Theorem \ref{thm1} follows from the following two lemmas.

\begin{lemma}
\label{lemma3-1}
$\mathcal{I}(\Omega)\subset  \mathcal{S}(\Omega)$.
\end{lemma}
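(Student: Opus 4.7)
Plan: The assumption $\kappa \in \mathcal{I}(\Omega)$ provides a uniform volume lower bound $|B(y,r)| \geq c r^\kappa$ for $y \in \overline{\Omega}$ and $r > 0$. The plan is to transport this geometric information into the $L^1$-endpoint Sobolev inequality via the global heat kernel of $\triangle_X$, using the bounds (3-3)--(3-4). As a preliminary observation, by Proposition \ref{prop2-1} each vector field has the form $X_j = \sum_k b_{jk}(x_1,\ldots,x_{k-1})\partial_{x_k}$, so $X_j$ is divergence-free, $X_j^{\ast} = -X_j$, and $\triangle_X = \sum_j X_j^2$ is self-adjoint on $L^2(\mathbb{R}^n)$; in particular $P_s$ is self-adjoint as well.

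Step 1 (ultracontractivity). For $u \in C_0^\infty(\Omega)$ extended by zero, the Gaussian upper bound (3-3) combined with $|B(y,\sqrt t)| \geq c t^{\kappa/2}$ at points of $\mathrm{supp}(u) \subset \overline{\Omega}$ and the symmetry $h(x,y,t) = h(y,x,t)$ yields
\begin{equation*}
\|P_t u\|_{L^\infty(\mathbb{R}^n)} \leq C t^{-\kappa/2}\,\|u\|_{L^1(\Omega)}\qquad \forall t > 0.
\end{equation*}
Step 2 (pseudo-Poincar\'e). A direct integration of the gradient estimate (3-4) in $y$, together with the doubling property from Proposition \ref{prop2-6}, gives $\|X_j P_s \phi\|_{L^\infty} \leq C s^{-1/2}\|\phi\|_{L^\infty}$. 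Combined with self-adjointness of $P_s$, the identity $\triangle_X P_s u = P_s \triangle_X u$, and integration by parts, this produces $\|\triangle_X P_s u\|_{L^1} \leq C s^{-1/2}\|Xu\|_{L^1}$. Integrating $u - P_t u = -\int_0^t \triangle_X P_s u\, ds$ in $L^1$ then delivers the pseudo-Poincar\'e inequality
\begin{equation*}
\|u - P_t u\|_{L^1(\mathbb{R}^n)} \leq C \sqrt{t}\,\|Xu\|_{L^1(\Omega)}.
\end{equation*}

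Step 3 (Sobolev via coarea). I combine the two previous steps via the Varopoulos--Coulhon--Saloff-Coste machinery. First one derives the isoperimetric inequality $|E|^{(\kappa-1)/\kappa} \leq C P_X(E)$ (for $P_X$ the horizontal perimeter) by applying the truncation bound $|\{|u|>2\lambda\}| \leq |\{|u-P_t u|>\lambda\}| + |\{|P_t u|>\lambda\}|$ to approximations of $\chi_E$, choosing $t = t(\lambda)$ so that ultracontractivity kills the second term and Markov together with Step 2 controls the first. Then the Fleming--Rishel coarea formula $\int|Xu|\,dx = \int_0^\infty P_X(\{|u|>s\})\,ds$ combined with the layer-cake bound $\|u\|_{\kappa/(\kappa-1)} \leq \int_0^\infty |\{|u|>s\}|^{(\kappa-1)/\kappa}\,ds$ yields
\begin{equation*}
\|u\|_{L^{\kappa/(\kappa-1)}(\Omega)} \leq C\,\|Xu\|_{L^1(\Omega)},
\end{equation*}
i.e. $\kappa \in \mathcal{S}(\Omega)$.

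The main obstacle is precisely Step 3. A naive duality approach writes $\int uv = \int_0^\infty \int \sum_j X_j u \cdot X_j P_s v\, dx\, ds$ and tries to bound $\int_0^\infty \|XP_s v\|_{L^\infty}\,ds$ by $\|v\|_{L^\kappa}$, but interpolating ultracontractivity with the gradient bound only gives $\|XP_s v\|_\infty \lesssim s^{-1}\|v\|_{L^\kappa}$, which is logarithmically divergent at both endpoints. Separating the scales via the truncation plus the coarea/isoperimetric detour above bypasses this divergence. A secondary technical point is that $P_t$ is defined on $\mathbb{R}^n$ rather than on $\Omega$; compact support of $u$ in $\Omega$ allows the volume lower bound to be applied only at points $y \in \overline{\Omega}$, which is where the kernel symmetry $h(x,y,t) = h(y,x,t)$ enters essentially.
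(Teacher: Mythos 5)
Your proposal is correct, and it takes a genuinely different route from the paper's, even though both hinge on the same heat-kernel bounds (3-3)--(3-4) and the volume lower bound $|B(y,\sqrt t)|\geq ct^{\kappa/2}$ for $y\in\overline{\Omega}$. The paper writes $u=-\int_0^\infty P_t\triangle_X u\,dt$, splits the time integral at a $\lambda$-dependent scale $s_\lambda$, estimates the tail of $\int P_tX_i(X_iu)\,dt$ in $L^\infty$ via $\|X_i^\cdot h(x,\cdot,t)\|_{L^\infty(\overline{\Omega})}\lesssim t^{-(\kappa+1)/2}$ and the near part in $L^1$ via $\|X_i^\cdot h(x,\cdot,t)\|_{L^1}\lesssim t^{-1/2}$, obtaining the weak-type bound (3-8), \emph{with only} $\|Xu\|_{L^1}$ on the right, which then feeds directly into the Maz'ya truncation $u_i=\phi(2^{1-i}|u|-1)$ to upgrade to strong type without ever touching perimeter or coarea. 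Your route combines the same two kernel estimates differently: ultracontractivity $\|P_t\|_{L^1\to L^\infty}\lesssim t^{-\kappa/2}$ plus pseudo-Poincar\'e $\|u-P_tu\|_{L^1}\lesssim\sqrt t\,\|Xu\|_{L^1}$, which after Chebyshev yield a weak Gagliardo--Nirenberg estimate with an extra $\|u\|_{L^1}^{1/\kappa}$ factor; that factor forces the detour through characteristic functions, the isoperimetric inequality $|E|^{(\kappa-1)/\kappa}\lesssim P_X(E)$, and Federer--Fleming coarea. Both routes work; the paper's is more self-contained (it avoids invoking BV theory and the coarea formula in the Carnot--Carath\'eodory setting, which your Step 3 implicitly depends on), while yours is more modular and identifies the two named inequalities (ultracontractivity, pseudo-Poincar\'e) that make the argument portable across settings. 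Your observation that the naive duality argument for $\int_0^\infty\|XP_sv\|_\infty\,ds$ diverges logarithmically is exactly the obstruction the paper's $\lambda$-dependent splitting is designed to circumvent.
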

\begin{proof}
For any $\kappa\in  \mathcal{I}(\Omega)$, there exists a positive constant $\widetilde{C}=\widetilde{C}(X,\Omega)>0$ such that 
\begin{equation}\label{3-7}
  |B(y,s)|\geq \widetilde{C}s^{\kappa}\qquad \forall~ y\in \overline{\Omega},~s>0.
\end{equation} 
We first verify that $1<\kappa\leq Q$, as required in the definition of $\mathcal S(\Omega)$. Fix $y_0\in\overline\Omega$ and set
$ M_0:=\sum_{\ell=\nu(y_0)}^Q f_\ell(y_0)>0$. By Propositions \ref{prop2-6} and \ref{prop2-8}, for $0<s\leq1$,
\[
 \widetilde C s^\kappa\leq |B(y_0,s)|
 \leq C_2\sum_{\ell=\nu(y_0)}^Q f_\ell(y_0)s^\ell
 \leq C_2M_0s^{\nu(y_0)},
\]
whereas for $s\geq1$,
\[
 \widetilde C s^\kappa\leq |B(y_0,s)|
 \leq C_2M_0s^Q.
\]
Letting $s\to 0$ in the first inequality and $s\to +\infty$ in the second gives
$
 n\leq\nu(y_0)\leq\kappa\leq Q$. Since $n\geq2$, it follows that $1<\kappa\leq Q$.

We then show that for any $u\in \mathcal{W}_{X,0}^{1,1}(\Omega)$, the following weak Sobolev inequality holds:
\begin{equation}\label{3-8}
\sup_{\lambda>0} \lambda^{\frac{\kappa}{\kappa-1}}|\{x\in \Omega| |u(x)|>\lambda\}|\leq c_{0}\|Xu\|_{L^1(\Omega)}^{\frac{\kappa}{\kappa-1}},
\end{equation}  
where
\[ c_{0}:=2^{\frac{3\kappa+1}{\kappa-1}}m^{\frac{2\kappa-1}{\kappa-1}}(\widetilde{C}(\kappa-1))^{-\frac{1}{\kappa-1}}A_{1}^{\frac{Q\kappa+\kappa+2}{2(\kappa-1)}}A_{2}^{\frac{(Q+1)\kappa}{2(\kappa-1)}} C_{3}^{\frac{\kappa}{\kappa-1}}>0,\]
 $A_{1},A_{2}$ are the positive constants appeared in Proposition \ref{prop3-1}, and $C_{3}$ is the positive constant defined in Proposition \ref{prop2-7}.

For $f\in C_0^\infty(\mathbb R^n)$ and $0<\varepsilon<R$, we have
\[
\begin{aligned}
-\int_\varepsilon^R P_t\triangle_X f(x)\,dt
&=
-\int_\varepsilon^R
 \int_{\mathbb R^n}
 h(x,y,t)(\triangle_Xf)(y)\,dy\,dt  \\
&=
-\int_\varepsilon^R
 \int_{\mathbb R^n}
 f(y)\triangle_X^yh(x,y,t)\,dy\,dt  \\
&=
-\int_\varepsilon^R
 \int_{\mathbb R^n}
 f(y)\partial_th(x,y,t)dydt \\
&=-\int_\varepsilon^R
 \partial_tP_tf(x)dt =P_\varepsilon f(x)-P_Rf(x).
\end{aligned}
\]
Letting $\varepsilon\to 0^{+}$ and $R\to+\infty$, and using
\eqref{3-5} and \eqref{3-6}, we obtain
\begin{equation}\label{3-9}
f(x)
=
-\int_0^\infty P_t\triangle_Xf(x) dt.
\end{equation}
For any open subset $\Omega\subset \mathbb{R}^n$, we identify  $C_0^\infty(\Omega)$ as a subspaces of $C_{0}^{\infty}(\mathbb{R}^n)$ by extending every function $u\in C_0^\infty(\Omega)$ to $\mathbb{R}^n$ by 
 setting $u=0$ outside $\Omega$. Then, for any $u\in C_0^\infty(\Omega)$ and $\lambda>0$, we derive from \eqref{3-9} that
 \begin{equation}\label{3-10}
 |\{x\in \Omega| |u(x)|>\lambda\}|\leq \sum_{i=1}^m\left|\left\{x\in \Omega\bigg| \left|\int_0^{+\infty}(P_tX_{i})(X_iu)(x)dt\right|\geq\frac{\lambda}{m}\right\}\right|.
 \end{equation}
 
Fix $1\leq i\leq m$ and let $f:=X_iu\in C_0^\infty(\Omega)$. By Propositions \ref{prop2-7} and \ref{prop3-1}, as well as \eqref{3-7}, we deduce that for any $x, y\in \overline{\Omega}$ and $t>0$, 
\begin{equation*}
  |X_i^yh(x,y,t)|\leq C_{3}(A_{1}A_{2})^{\frac{Q}{2}+1}t^{-\frac{1}{2}}h(y,x,A_{1}A_{2}t)\leq  C_{3}A_{1}^{\frac{Q-\kappa}{2}+2}A_{2}^{\frac{Q-\kappa}{2}+1}\widetilde{C}^{-1}t^{-\frac{\kappa+1}{2}}.
\end{equation*}
This means
\begin{equation}\label{3-11}
 \begin{aligned}
  \|P_tX_if\|_{L^\infty(\Omega)}&=\sup_{x\in\overline{\Omega}}\left|\int_{\Omega}h(x,y,t)X_if(y)dy \right|=\sup_{x\in\overline{\Omega}}\left|\int_{\Omega}f(y)X_i^yh(x,y,t)dy\right|\\
  &\leq \sup_{x,y\in \overline{\Omega}}|X_i^yh(x,y,t)|\|f\|_{L^1(\Omega)}\leq C_{3}A_{1}^{\frac{Q-\kappa}{2}+2}A_{2}^{\frac{Q-\kappa}{2}+1}\widetilde{C}^{-1}t^{-\frac{\kappa+1}{2}}\|f\|_{L^1(\Omega)}.
 \end{aligned} 
  \end{equation}
 According to \eqref{3-11}, for any $s>0$ and $x\in \Omega$, we have
 \begin{equation}\label{3-12}
\begin{aligned}
  \bigg|\int_{s}^{+\infty}P_tX_{i}f(x) dt\bigg|&\leq C_{3}A_{1}^{\frac{Q-\kappa}{2}+2}A_{2}^{\frac{Q-\kappa}{2}+1}\widetilde{C}^{-1}\|f\|_{L^1(\Omega)}\int_{s}^{+\infty}t^{-\frac{1+\kappa}{2}}dt\\
  &=\frac{2 C_{3}A_{1}^{\frac{Q-\kappa}{2}+2}A_{2}^{\frac{Q-\kappa}{2}+1}\widetilde{C}^{-1}}{\kappa-1}s^{\frac{1-\kappa}{2}}\|f\|_{L^1(\Omega)}:=c_{1}s^{\frac{1-\kappa}{2}}\|f\|_{L^1(\Omega)},
\end{aligned}
  \end{equation}
where we substitute $c_1:=\frac{2 C_{3}A_{1}^{\frac{Q-\kappa}{2}+2}A_{2}^{\frac{Q-\kappa}{2}+1}\widetilde{C}^{-1}}{\kappa-1}$ for brevity. 
  
For any $\lambda>0$,  let $s_{\lambda}:=\left(\frac{4mc_{1}}{\lambda}\right)^{\frac{2}{\kappa-1}}\|f\|_{L^1(\Omega)}^{\frac{2}{\kappa-1}} $ such that
$ c_{1}s_{\lambda}^{\frac{1-\kappa}{2}}\|f\|_{L^1(\Omega)}=\frac{\lambda}{4m}$. Then, 
 \begin{equation*}
  \bigg|\int_{s_{\lambda}}^{+\infty}P_tX_{i}f(x) dt\bigg|\leq \frac{\lambda}{4m} \qquad \forall~ x\in\Omega,
  \end{equation*}
which implies that
 \begin{equation}
\begin{aligned}\label{3-13}
\left|\left\{x\in \Omega\bigg| \bigg|\int_0^{+\infty}P_tX_{i}f(x) dt\bigg|\geq\frac{\lambda}{m}\right\}\right|\leq \left|\left\{x\in \Omega\bigg| \bigg|\int_0^{s_{\lambda}}(P_tX_{i})f(x) dt\bigg|\geq\frac{\lambda}{2m}\right\}\right|.
\end{aligned} 
  \end{equation}
Observing that $\int_{\mathbb{R}^n}h(x,y,t)dx=1$ for any $y\in \mathbb{R}^n$ and $t>0$, we obtain
\begin{equation}\label{3-14}
\begin{aligned}
 & \|P_tX_if\|_{L^1(\Omega)}=\int_{\Omega}\left|\int_{\Omega}h(x,y,t)X_if(y)dy\right|dx=\int_{\Omega}\left|\int_{\Omega}f(y)X_i^yh(x,y,t)dy\right|dx\\
  &\leq C_{3}(A_{1}A_{2})^{\frac{Q}{2}+1}t^{-\frac{1}{2}}\int_{\mathbb{R}^n}\int_{\Omega}|f(y)|h(x,y,A_{1}A_{2}t)dydx=C_{3}(A_{1}A_{2})^{\frac{Q}{2}+1} t^{-\frac{1}{2}}\|f\|_{L^1(\Omega)}.
 \end{aligned} 
\end{equation}
It follows from  \eqref{3-14} that
\[\begin{aligned}
&\left|\left\{x\in \Omega\bigg| \bigg|\int_0^{s_{\lambda}}(P_tX_{i})f(x) dt\bigg|\geq\frac{\lambda}{2m}\right\}\right|\leq \frac{2m}{\lambda}\int_0^{s_{\lambda}}\int_{\Omega}\left|(P_tX_{i})f(x)\right| dxdt\\
&\leq\frac{4m}{\lambda}C_{3}(A_{1}A_{2})^{\frac{Q}{2}+1} s_{\lambda}^{\frac{1}{2}}\|f\|_{L^1(\Omega)}=\left(\frac{2}{\widetilde{C}(\kappa-1)}\right)^{\frac{1}{\kappa-1}}A_{1}^{\frac{Q\kappa+\kappa+2}{2(\kappa-1)}}A_{2}^{\frac{(Q+1)\kappa}{2(\kappa-1)}}\left( \frac{4mC_{3}}{\lambda}\right)^{\frac{\kappa}{\kappa-1}}\|f\|_{L^1(\Omega)}^{\frac{\kappa}{\kappa-1}}.
\end{aligned} \]
Thus, we conclude that for any $1\leq i\leq m$ and $u\in C_{0}^{\infty}(\Omega)$, 
\[ \left|\left\{x\in \Omega\bigg| \bigg|\int_0^{+\infty}(P_tX_{i})(X_iu)(x) dt\bigg|\geq\frac{\lambda}{m}\right\}\right|\leq c_{2}\lambda^{-\frac{\kappa}{\kappa-1}}\|X_iu\|_{L^1(\Omega)}^{\frac{\kappa}{\kappa-1}},\]
where we set $c_{2}:=\left(\frac{2}{\widetilde{C}(\kappa-1)}\right)^{\frac{1}{\kappa-1}}A_{1}^{\frac{Q\kappa+\kappa+2}{2(\kappa-1)}}A_{2}^{\frac{(Q+1)\kappa}{2(\kappa-1)}}\left( 4mC_{3}\right)^{\frac{\kappa}{\kappa-1}}$ for brevity.  As a result of \eqref{3-10}, 
 \begin{equation}
\begin{aligned}\label{3-15}
 |\{x\in \Omega| |u(x)|>\lambda\}| &\leq \sum_{i=1}^m\left|\left\{x\in \Omega\bigg| \left|\int_0^{+\infty}(P_tX_{i})(X_iu)(x)dt\right|\geq\frac{\lambda}{m}\right\}\right|\leq mc_{2}\lambda^{-\frac{\kappa}{\kappa-1}}\|Xu\|_{L^1(\Omega)}^{\frac{\kappa}{\kappa-1}}\\
&= m^{\frac{2\kappa-1}{\kappa-1}}\left(\frac{2}{\widetilde{C}(\kappa-1)}\right)^{\frac{1}{\kappa-1}}A_{1}^{\frac{Q\kappa+\kappa+2}{2(\kappa-1)}}A_{2}^{\frac{(Q+1)\kappa}{2(\kappa-1)}}\left( 4C_{3}\right)^{\frac{\kappa}{\kappa-1}}\lambda^{-\frac{\kappa}{\kappa-1}}\|Xu\|_{L^1(\Omega)}^{\frac{\kappa}{\kappa-1}}
 \end{aligned}
 \end{equation}
holds for any $\lambda>0$ and $u\in C_{0}^{\infty}(\Omega)$.

For any $u\in \mathcal{W}_{X,0}^{1,1}(\Omega)$, there exists a sequence $\{u_k\}_{k=1}^{\infty}\subset C_0^\infty(\Omega)$ such that $u_k \to u$ in $\mathcal{W}_{X,0}^{1,1}(\Omega)$. Then, we can select a subsequence $\{u_{k_j}\}_{j=1}^{\infty}\subset \{u_k\}_{k=1}^{\infty}$ such that for any $\varepsilon>0$,
 \begin{equation}\label{3-16}
\lim_{j\to\infty}|\{x\in \Omega||u_{k_j}(x)-u(x)|\geq \varepsilon \}|=0.
 \end{equation}
  Now, for any $\lambda>0$, by \eqref{3-15} we have
 \begin{equation}\label{3-17}
  \begin{aligned}
|\{x\in \Omega| |u(x)|>\lambda\}|&\leq \left|\left\{x\in \Omega\Big||u(x)-u_{k_j}(x)|>\frac{\lambda}{2}\right\}\right|+\left|\left\{x\in \Omega\Big||u_{k_j}(x)|>\frac{\lambda}{2}\right\}\right|\\
  &\leq \left|\left\{x\in \Omega\Big||u(x)-u_{k_j}(x)|>\frac{\lambda}{2}\right\}\right|+c_{0}\lambda^{-\frac{\kappa}{\kappa-1}}\|Xu_{k_{j}}\|_{L^1(\Omega)}^{\frac{\kappa}{\kappa-1}},
  \end{aligned}
  \end{equation}
where
\begin{equation}\label{3-18}
c_{0}:=2^{\frac{3\kappa+1}{\kappa-1}}m^{\frac{2\kappa-1}{\kappa-1}}\left(\frac{1}{\widetilde{C}(\kappa-1)}\right)^{\frac{1}{\kappa-1}}A_{1}^{\frac{Q\kappa+\kappa+2}{2(\kappa-1)}}A_{2}^{\frac{(Q+1)\kappa}{2(\kappa-1)}} C_{3}^{\frac{\kappa}{\kappa-1}}>0.
\end{equation}
Letting $j\to \infty$ in \eqref{3-17} and using \eqref{3-16}, the weak Sobolev inequality \eqref{3-8} follows.

Let $\phi(t)=\max\{0,\min\{t,1\}\}=t_+-(t-1)_+\geq 0$ be the auxiliary function on $\mathbb{R}$. For any $u\in C^\infty_0(\Omega)$ and $i\in \mathbb{Z}$, we define
  \begin{equation}\label{3-19}
u_i(x):=\phi(2^{1-i}|u(x)|-1)=\left\{
                            \begin{array}{ll}
                              0, & \hbox{if $|u(x)|\leq 2^{i-1}$;} \\[1.5mm]
                              2^{1-i}|u(x)|-1, & \hbox{if $2^{i-1}<|u(x)|\leq 2^{i}$;} \\[1.5mm]
                              1, & \hbox{if $|u(x)|>2^{i}$.}
                            \end{array}
                          \right.
\end{equation}
From \cite[Proposition 2.9]{chen-chen-li2024}, we see that $u_i=\left((2^{1-i}|u|-1)_{+}-(2^{1-i}|u|-2)_{+}\right)\in \mathcal{W}^{1,1}_{X,0}(\Omega)$,  and
   \begin{equation*}
Xu_i= \begin{cases}2^{1-i}{\rm sgn}(u)Xu & \text { if }2^{i-1} < |u(x)| \leq 2^i,\\ 0 & \text { otherwise. }\end{cases}
\end{equation*}
Applying \eqref{3-8} to $u_i$,  we have for any $t>0$,
\begin{equation}\label{3-20}
\begin{aligned}
|\{x\in \Omega|u_i(x)>t\}|&\leq c_{0}t^{-\frac{\kappa}{\kappa-1}}\|Xu_i\|_{L^1(\Omega)}^{\frac{\kappa}{\kappa-1}}\leq  c_{0}2^{(1-i)\frac{\kappa}{\kappa-1}}t^{-\frac{\kappa}{\kappa-1}}\left(\int_{\{x\in\Omega|2^{i-1} < |u(x)| \leq 2^i\}}|Xu|dx\right)^{\frac{\kappa}{\kappa-1}}.
\end{aligned}
 \end{equation}
According to \eqref{3-19} and \eqref{3-20}, we deduce that
  \begin{align*}
    &\int_{\Omega}|u(x)|^{\frac{\kappa}{\kappa-1}}dx=\sum_{i=-\infty}^{\infty}\int_{\{x\in\Omega|2^{i}<|u(x)|\leq 2^{i+1}\}}|u(x)|^{\frac{\kappa}{\kappa-1}}dx\\
&\leq\sum_{i=-\infty}^\infty  2^{(i+1)\frac{\kappa}{\kappa-1}}|\{x\in \Omega|2^i<|u(x)|\leq 2^{i+1}\}|\leq \sum_{i=-\infty}^\infty  2^{(i+1)\frac{\kappa}{\kappa-1}}|\{x\in \Omega|u_i(x)= 1\}|\\
&\leq \sum_{i=-\infty}^\infty  2^{(i+1)\frac{\kappa}{\kappa-1}}|\{x\in \Omega|u_i(x)>2^{-1}\}|\\
     &\leq c_{0}\sum_{i=-\infty}^\infty  2^{(i+2)\frac{\kappa}{\kappa-1}}2^{(1-i)\frac{\kappa}{\kappa-1}}\left(\int_{\{x\in\Omega|2^{i-1} < |u(x)| \leq 2^i\}}|Xu|dx\right)^{\frac{\kappa}{\kappa-1}}\leq 2^{\frac{3\kappa}{\kappa-1}}c_{0}\|Xu\|_{L^1(\Omega)}^{\frac{\kappa}{\kappa-1}}
  \end{align*}
for any $u\in C_{0}^{\infty}(\Omega)$. This indicates that  $\kappa\in \mathcal{S}(\Omega)$ and $\mathcal{I}(\Omega)\subset \mathcal{S}(\Omega)$.
\end{proof}

\begin{lemma}
\label{lemma3-2}
$\mathcal{S}(\Omega)\subset\mathcal{I}_{\mathrm{int}}(\Omega)$.
\end{lemma}
\begin{proof}
Letting $\kappa\in\mathcal{S}(\Omega)$, it follows that
\begin{equation}\label{3-A-21}
 \|u\|_{L^{\frac{\kappa}{\kappa-1}}(\Omega)}\leq C\|Xu\|_{L^1(\Omega)}
 \qquad\forall u\in\mathcal W_{X,0}^{1,1}(\Omega).
\end{equation}

Fix $x\in\Omega$ and $0<r<d(x,\Omega^c)$, with the convention $d(x,\varnothing)=+\infty$, and define
\[
 f(y):=(r-d(x,y))_+.
\]
Then $0\leq f\leq r$, and $|f(y_1)-f(y_2)|\leq d(y_1,y_2)$ for all  $y_{1},y_{2}\in \mathbb{R}^n$. From Proposition \ref{prop2-9},  ${\rm supp}~f=\overline{B(x,r)}$ is a compact subset of $\mathbb{R}^n$. 
According to \cite[Theorem 2.5]{Monti2001},  $|Xf|\leq 1$ a.e. on $\mathbb{R}^n$, which implies that $f\in \mathcal{W}_{X,0}^{1,1}(\mathbb{R}^n)$. Since $0<r<d(x,\Omega^{c})$, then  ${\rm supp}~f=\overline{B(x,r)}\subset\subset \Omega$, and therefore $f\in \mathcal{W}_{X,0}^{1,1}(\Omega)$.

Using \eqref{3-A-21}, we have
\[\frac{r}{2}\left|B\left(x,\frac{r}{2}\right)\right|^{\frac{\kappa-1}{\kappa}}\leq \left(\int_{B\left(x,\frac{r}{2}\right)}|f(y)|^{\frac{\kappa}{\kappa-1}}dy\right)^{\frac{\kappa-1}{\kappa}}\leq C  \|Xf\|_{L^1(\Omega)}\leq C|B(x,r)|. \]
Using Proposition \ref{prop2-7}, we have $|B(x,\frac{r}{2})|\geq C_{3}^{-1}2^{-Q}|B(x,r)|$. Therefore,  for any $x\in\Omega$ and $0<r<d(x,\Omega^{c})$, 
\[ \frac{|B(x,r)|}{r^{\kappa}}\geq 2^{Q(1-\kappa)-\kappa}C^{-\kappa}C_{3}^{1-\kappa}, \]
which implies $\kappa\in \mathcal{I}_{\mathrm{int}}(\Omega)$. Consequently, $\mathcal{S}(\Omega)\subset\mathcal{I}_{\mathrm{int}}(\Omega)$.
\end{proof}

\subsection{Proof of Theorem \ref{thm2}}
The proof of Theorem \ref{thm2} is completed by the following Lemmas \ref{lemma3-3}-\ref{lemma3-6}.
\begin{lemma}
\label{lemma3-3}
 $\mathcal{I}(\Omega)$ is a non-empty interval of the form either $(\inf\mathcal{I}(\Omega),Q]$ or $[\inf\mathcal{I}(\Omega), Q]$, with $\inf\mathcal{I}(\Omega)\geq \tilde{\nu}$. Moreover, if $0\in \overline{\Omega}$, then $\mathcal{I}(\Omega)=\{Q\}$. In particular, $\mathcal{I}(\mathbb{R}^n)=\{Q\}$. 
\end{lemma}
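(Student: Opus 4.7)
The plan is to extract all three claims from the Nagel-Stein-Wainger expansion in Proposition \ref{prop2-5} together with the characterization of the pointwise homogeneous dimension in Proposition \ref{prop2-7}.

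First I would show $Q\in\mathcal{I}(\Omega)$, which gives non-emptiness. By Proposition \ref{prop2-5},
\[ |B(x,r)|\geq C_{1}\Lambda(x,r)=C_{1}\sum_{k=n}^{Q}f_{k}(x)r^{k}\geq C_{1}f_{Q}(0)r^{Q}, \]
uniformly in $x\in\mathbb{R}^n$ and $r>0$, because $f_{Q}\equiv f_{Q}(0)>0$. Conversely, for any $\kappa>Q$ and any fixed $x\in\overline{\Omega}$, the upper bound $|B(x,r)|\leq C_{2}\Lambda(x,r)\leq C(x)r^{Q}$ (valid for $r\geq 1$, say) forces $|B(x,r)|/r^{\kappa}\to 0$ as $r\to\infty$, so $\kappa\notin\mathcal{I}(\Omega)$. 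Hence $\mathcal{I}(\Omega)\subset(0,Q]$ and $Q$ is the maximum.

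Next I would establish the interval structure by verifying that $\mathcal{I}(\Omega)$ is upward-closed inside $(0,Q]$. Suppose $\kappa\in\mathcal{I}(\Omega)$ with constant $c_{\kappa}>0$, and let $\kappa<\kappa'\leq Q$. For $x\in\overline{\Omega}$ and $0<r\leq 1$,
\[ |B(x,r)|\geq c_{\kappa}r^{\kappa}\geq c_{\kappa}r^{\kappa'}, \]
while for $r\geq 1$, the already proved estimate $|B(x,r)|\geq C_{1}f_{Q}(0)r^{Q}\geq C_{1}f_{Q}(0)r^{\kappa'}$ takes over. Taking the minimum of the two constants shows $\kappa'\in\mathcal{I}(\Omega)$. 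Consequently $\mathcal{I}(\Omega)$ must coincide with $(\inf\mathcal{I}(\Omega),Q]$ or $[\inf\mathcal{I}(\Omega),Q]$, depending on whether the infimum is attained.

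To get $\inf\mathcal{I}(\Omega)\geq\tilde{\nu}$, I would pick a point $x_{0}\in\overline{\Omega}$ realizing $\tilde{\nu}=\nu(x_{0})$. By Proposition \ref{prop2-7}, $\nu(x_{0})=\min\{d(I)\mid\lambda_{I}(x_{0})\neq 0\}$, so $f_{k}(x_{0})=\sum_{d(I)=k}|\lambda_{I}(x_{0})|$ vanishes for every $k<\nu(x_{0})$. Combined with \eqref{2-5}, this gives
\[ \Lambda(x_{0},r)=\sum_{k=\nu(x_{0})}^{Q}f_{k}(x_{0})r^{k}\leq C r^{\nu(x_{0})}\qquad \text{for } 0<r\leq 1, \]
whence $|B(x_{0},r)|\leq C_{2}C r^{\tilde{\nu}}$ for small $r$. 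If $\kappa<\tilde{\nu}$, then $|B(x_{0},r)|/r^{\kappa}\to 0$ as $r\to 0^{+}$, ruling out $\kappa\in\mathcal{I}(\Omega)$. Finally, when $0\in\overline{\Omega}$, Proposition \ref{prop2-7} gives $\nu(0)=Q$, hence $\tilde{\nu}=Q$, so the sandwich $Q\leq\inf\mathcal{I}(\Omega)\leq Q$ collapses to $\mathcal{I}(\Omega)=\{Q\}$; taking $\Omega=\mathbb{R}^n$ yields the special case.

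No step looks like a serious obstacle; the argument is essentially bookkeeping around the two-sided expansion \eqref{2-5}. The one point requiring mild care is the split at $r=1$ in the upward-closure argument, which is needed precisely because the defining inequality in \eqref{1-10} has to hold for \emph{all} scales, and a single power law cannot interpolate between the small-$r$ and large-$r$ regimes of $\Lambda(x,r)$.
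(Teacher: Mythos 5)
Your proof is correct and follows essentially the same route as the paper's: you invoke Proposition \ref{prop2-5} for the two-sided comparison $|B(x,r)|\approx\Lambda(x,r)$, show $Q$ is the maximum of $\mathcal{I}(\Omega)$ via the large-$r$ behaviour, show upward-closure on $(0,Q]$, and use the small-$r$ expansion $\Lambda(x_0,r)=\sum_{k\geq\nu(x_0)}f_k(x_0)r^k$ to rule out $\kappa<\tilde{\nu}$. The only cosmetic differences are that the paper proves upward-closure via the interpolation $r^\alpha=(r^\kappa)^{(Q-\alpha)/(Q-\kappa)}(r^Q)^{(\alpha-\kappa)/(Q-\kappa)}\leq|B(x,r)|/C$ rather than your explicit split at $r=1$, and for the $\tilde{\nu}$ bound the paper contradicts a lower bound on $\Lambda(x_0,r)$ by sending $r\to0^+$ whereas you use the matching upper bound $|B(x_0,r)|\leq C_2\Lambda(x_0,r)\leq Cr^{\nu(x_0)}$ for small $r$; both are the same estimate seen from opposite sides.
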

\begin{proof}
We first show that $\sup \mathcal{I}(\Omega)=Q\in \mathcal{I}(\Omega)$.
By Proposition \ref{prop2-6}, we have $Q\in \mathcal{I}(\Omega)$.
  Assume by contradiction that $\kappa\in \mathcal{I}(\Omega)$ and $\kappa>Q$. Combining \eqref{1-6} and \eqref{2-7}, we obtain 
 \begin{equation}\label{3-23}
 \sum_{k=n}^{Q}f_k(x)r^k\geq    Cr^{\kappa}\qquad \forall~ x\in \overline{\Omega},~r>0.
 \end{equation}
Taking $r\to \infty$ in \eqref{3-23}, we obtain a contradiction because $\kappa>Q$ and $f_Q(x)\equiv f_{Q}(0)>0$. Therefore, $\sup \mathcal{I}(\Omega)=Q\in \mathcal{I}(\Omega)$.

If $\kappa\in \mathcal{I}(\Omega)$ with $\kappa<Q$, for any $\alpha\in [\kappa,Q]$, we deduce from Proposition \ref{prop2-6} that
\[ r^{\alpha}=r^{\frac{Q-\alpha}{Q-\kappa}\kappa+\frac{\alpha-\kappa}{Q-\kappa}Q}\leq \left(\frac{|B(x,r)|}{C}\right)^{\frac{Q-\alpha}{Q-\kappa}}\left(\frac{|B(x,r)|}{C} \right)^{\frac{\alpha-\kappa}{Q-\kappa}}=\frac{|B(x,r)|}{C}\qquad \forall~ x\in \overline{\Omega},~r>0, \]
which derives that $[\kappa,Q]\subset \mathcal{I}(\Omega)$, and $\mathcal{I}(\Omega)$ is a non-empty interval of the form either $(\inf\mathcal{I}(\Omega),Q]$ or $[\inf\mathcal{I}(\Omega), Q]$.  

Then, we verify that $\inf\mathcal{I}(\Omega)\geq \tilde{\nu}$. 
Suppose there exists $\kappa\in \mathcal{I}(\Omega)$ such that $\kappa<\tilde{\nu}$. Since $\nu(x)$ is an integer-valued function, there exists $x_0\in \overline{\Omega}$ such that $\nu(x_0)=\tilde{\nu}> \kappa$. Using Propositions \ref{prop2-6} and \ref{prop2-8}, we obtain that $f_{\nu(x_0)}(x_0)>0$ and
\begin{equation}\label{3-24}
  \Lambda(x_0,r)=\sum_{l=\nu(x_0)}^{Q}f_{l}(x_0)r^{l}\geq Cr^{\kappa}\qquad \forall~ r>0,
\end{equation}
where $C>0$ is a positive constant. It follows from \eqref{3-24} that 
\begin{equation}\label{3-25}
  \sum_{l=\nu(x_0)}^{Q}f_{l}(x_0)r^{l-\kappa}\geq C \qquad \forall~ r>0.
\end{equation}
Taking  $r\to 0^{+}$ in \eqref{3-25}, we get $C\leq 0$, which again leads to a contradiction. Consequently, $\inf\mathcal{I}(\Omega)\geq \tilde{\nu}$.

On the other hand, Proposition \ref{prop2-8} yields that $ \nu(0)=\max_{x\in \mathbb{R}^n}\nu(x)=Q$. This implies that $\mathcal{I}(\Omega)=\{Q\}$, provided  $0\in\overline{\Omega}$.
\end{proof}

\begin{lemma}
\label{lemma3-4}
For any  $\mathscr{A}\in \mathcal{G}$ and $t>0$, we have $\mathcal{I}(\Omega)=\mathcal{I}(\mathscr{A}(\Omega))=\mathcal{I}(\delta_{t}(\Omega))$. 
\end{lemma}
\begin{proof}
By  Proposition \ref{prop2-6}, $\kappa\in \mathcal{I}(\Omega)$ is equivalent to
\begin{equation}\label{3-26}
  \inf_{r>0,~x\in \overline{\Omega}}\frac{\Lambda(x,r)}{r^{\kappa}}>0.
\end{equation}
Using Proposition \ref{prop2-8}, we get $\Lambda(\mathscr{A}(x),r)=\Lambda(x,r)$, which yields
\begin{equation*}
\inf_{r>0,~y\in \overline{\mathscr{A}(\Omega)}}\frac{\Lambda(y,r)}{r^{\kappa}}=  \inf_{r>0,~x\in \overline{\Omega}}\frac{\Lambda(\mathscr{A}(x),r)}{r^{\kappa}}=\inf_{r>0,~x\in \overline{\Omega}}\frac{\Lambda(x,r)}{r^{\kappa}}.
\end{equation*}
Observing that $\Lambda(\delta_{t}(x),tr)=t^{Q}\Lambda(x,r)$, we get
\begin{equation*}
\inf_{r>0,~y\in \overline{\delta_{t}(\Omega)}}\frac{\Lambda(y,r)}{r^{\kappa}}=\inf_{r>0,~x\in \overline{\Omega}}\frac{\Lambda(\delta_{t}(x),r)}{r^{\kappa}}=t^{Q-\kappa}\inf_{r>0,~x\in \overline{\Omega}}\frac{\Lambda(x,r)}{r^{\kappa}}.
\end{equation*}
 Consequently, $\mathcal{I}(\Omega)=\mathcal{I}(\mathscr{A}(\Omega))=\mathcal{I}(\delta_{t}(\Omega))$.
\end{proof}

\begin{lemma}
\label{lemma3-5}
If the asymptotic singular
dimension  satisfies $\nu_{\rm sing}= \max_{z \in \Pi_\infty(\Omega)} \nu(z) \le \tilde{\nu}$, then 
 \[\mathcal{I}(\Omega)=[\tilde{\nu}, Q].\]
\end{lemma}
\begin{proof}
If $\tn=Q$, Lemma \ref{lemma3-3} gives that $\mathcal{I}(\Omega)=\{Q\}$. Let us examine the case that $\tn\leq Q-1$. Define the continuous function 
\[ g(x):=\sum_{k=n}^{\tn}f_k(x),\]
where $f_{k}(x)=\sum_{d(I)=k}|\lambda_I(x)|$ is the function introduced in Proposition \ref{prop2-6}. We then claim that $g(x)>0$ on $\overline{\Omega}$. 

Suppose there exists $x_0\in \overline{\Omega}$ such that $g(x_0)=0$. Then for $0<r<1$, by \eqref{2-8} we have
\[0<\Lambda(x_0,r)=\sum_{k=n}^{Q}f_k(x_{0})r^k\leq \sum_{k=n}^{\tn}f_k(x_0)+\sum_{k=\tn+1}^Q f_k(x_0)r^k=\sum_{k=\tn+1}^Q f_k(x_0)r^k. \]
It derives from Proposition \ref{prop2-8} that
\begin{equation}\label{3-A-25}
\begin{aligned}
\tn=\max_{x\in\overline{\Omega}}\nu(x)&\geq \nu(x_0)=\lim_{r\to 0^+}\frac{\log\Lambda(x_0,r)}{\log r}\geq \lim_{r\to 0^+}\frac{\log\left(\sum_{k=\tn+1}^Q f_k(x_0)r^k\right)}{\log r}\\
&= \min\{k|f_k(x_0)\neq 0,~\tn+1\leq k\leq Q\}\geq \tn+1,
\end{aligned}
\end{equation}
which yields a contradiction. Thus, $g(x)>0$ on $\overline{\Omega}$.

If $\overline{\Omega}$ is bounded, then $g(x)\geq C>0$ on $\overline{\Omega}$. Now, we consider the case that 
 $\overline{\Omega}$ is unbounded. To capture the asymptotic behavior, recall that
\[ \Pi_\infty(\Omega) = \bigcap_{R>0} Z_R, \quad \text{where} \quad Z_R = \overline{\left\{\delta_{\frac{1}{d(x)}}(x) \mathrel{\Big|} x \in \Omega, d(x) > R\right\}}\neq \varnothing. \]
From Proposition \ref{prop2-9}, for any $x_0\in \mathbb{R}^n$ and $r>0$, the closed subunit ball $\overline{B(x_0,r)}$ is compact. Thus,
 $\partial B(0,1)$ is a compact set, and $\{Z_R\}_{R>0}$ forms a decreasing family of compact subsets. 

Since $\nu_{\rm sing} = \max_{z \in \Pi_\infty(\Omega)} \nu(z) \leq \tn$, we have $\nu(z) \leq \tn$ for all $z \in \Pi_\infty(\Omega)$. The same argument as \eqref{3-A-25} ensures that $g(z) > 0$ for all $z \in \Pi_\infty(\Omega)$. Let $U:= \{z \in \partial B(0,1)| g(z) > 0\}$. Since $g$ is a continuous function, $U$ is an open set in $\partial B(0,1)$ that contains $\Pi_\infty(\Omega) = \bigcap_{R>0} Z_R$. By the finite intersection property of compact sets, there exists a sufficiently large $R_0 \geq 1$ such that $Z_{R_0} \subset U$. Since $Z_{R_0}$ is a compact subset of $U$ and $g>0$ on $U$, there exists a constant $c_0 > 0$ such that 
\begin{equation}\label{3-27}
g(z) \geq c_0 > 0 \qquad \forall z \in Z_{R_0}.
\end{equation}

Now, we partition $\overline{\Omega}$ as
\[ \overline{\Omega}=(\overline{\Omega}\cap \overline{B(0,R_0)})\cup (\overline{\Omega}\setminus \overline{B(0,R_0)}):=\overline{\Omega}_{1}\cup \overline{\Omega}_{2}. \]
Note that $\overline{\Omega}_{1}$ is a compact subset of $\overline{\Omega}$. Since $g(x) > 0$ on $\overline{\Omega}$, it attains a positive minimum on $\overline{\Omega}_1$:
\begin{equation}\label{3-28}
  c_1 := \min_{x \in \overline{\Omega}_1} g(x) > 0.
\end{equation}
 For any $x\in \overline{\Omega}_2$, we have $d(x) > R_0 \geq 1$, and  there exists a sequence $\{x_j\} \subset \Omega$ such that $x_j \to x$. For $j$ large enough, $d(x_j) > R_0$, which implies $\delta_{\frac{1}{d(x_j)}}(x_j) \in \{ \delta_{\frac{1}{d(y)}}(y)| y \in \Omega, d(y) > R_0 \}$. Taking the limit as $j \to \infty$, the continuity of dilation and subunit metric yields that the projected point $z := \delta_{\frac{1}{d(x)}}(x)$ belongs to $Z_{R_0}$. By the $\delta_t$-homogeneity of $f_k$ from Proposition \ref{prop2-6}, we have $f_k(\delta_t(x)) = t^{Q-k} f_k(x)$, which gives $f_k(x) = d(x)^{Q-k} f_k(z)$. Since $d(x) > 1$ and $Q-k \geq 0$ for $n \leq k \leq \tn$, it follows that $d(x)^{Q-k} \ge 1$. Therefore, we obtain
\begin{equation}\label{3-29}
g(x) = \sum_{k=n}^{\tn} d(x)^{Q-k} f_k(z) \geq \sum_{k=n}^{\tn} f_k(z) = g(z) \geq c_0 > 0 \qquad \forall x \in \overline{\Omega}_2.
\end{equation}
Combining \eqref{3-28} and \eqref{3-29}, we deduce that
\begin{equation}\label{3-30}
  g(x)\geq \min\left\{c_0, c_1 \right\}>0\qquad \forall x\in \overline{\Omega}.
\end{equation}

Owing to Proposition \ref{prop2-6} and \eqref{3-30}, for any $x\in \overline{\Omega}$ and $0<r<1$, we obtain
\begin{equation*}
  |B(x,r)|\geq C_{1}\sum_{k=n}^{\tn}f_k(x)r^{k}\geq  C_{1} r^{\tn}g(x)\geq  C r^{\tn}.
\end{equation*}
Additionally, for any $x\in \overline{\Omega}$ and $r\geq 1$, we have
$ |B(x,r)|\geq  C_{1}f_{Q}(0)r^{Q} \geq C_1f_{Q}(0)r^{\tn}$. As a result, $\tn\in \mathcal{I}(\Omega)$, and Lemma \ref{lemma3-3} indicates that $\mathcal{I}(\Omega)=[\tn,Q]$.
\end{proof}

Finally, we present an example illustrating the existence of homogeneous H\"{o}rmander vector fields $X$ and an unbounded domain $\Omega$ such that $\tilde{\nu}<\inf\mathcal{I}(\Omega)<Q$, with $\inf\mathcal{I}(\Omega)\notin \mathcal{I}(\Omega)$. Moreover, $\inf\mathcal{I}(\Omega)$ may be non-integer. 

\begin{ex}
\label{example3-1}
  Consider the following smooth vector fields defined on $\mathbb{R}^3$:
  \[ X_{1}=\partial_{x_{1}},\qquad X_{2}=x_{1}\partial_{x_{2}},\qquad X_{3}=x_{2}\partial_{x_{3}}. \]
These vector fields satisfy conditions (\hyperref[H1]{H.1}) and (\hyperref[H2]{H.2}). The associated dilations is given by $\delta_t(x)=(tx_1,t^2x_2,t^3x_3)$ and the homogeneous dimension $Q=6$.
  
  Let $\Omega=\{x=(x_1,x_2,x_3)\in \mathbb{R}^3 \mid x_2>f(x_1)>0, x_1\in \mathbb{R}^{+}, x_3\in \mathbb{R}\}$ be an unbounded domain, where
\[ f(x_{1})=\left\{
                \begin{array}{ll}
               \frac{1}{x_1^\beta}\cdot \frac{1}{(|\log x_1|+1)}   , & \hbox{$0<x_{1}<1$;} \\[2mm]
                   \frac{1}{x_1^\beta}, & \hbox{$x_{1}\geq 1$.}
                \end{array}
              \right. \]
              with $0<\beta\leq\frac{1}{2}$. It follows that $\overline{\Omega}=\{x=(x_1,x_2,x_3)\in \mathbb{R}^3\mid x_2\geq f(x_1)>0, x_1\in \mathbb{R}^{+}, x_3\in \mathbb{R}\}$, and Proposition \ref{prop2-6} gives that
\begin{equation}
|B(x,r)|\approx|x_{1}x_{2}|r^3+(|x_1|^2+|x_2|)r^4+|x_1|r^5+r^6,
\end{equation}
and $\tn=3$. For any $\varepsilon\in(0,\beta)$ and $x=(x_1,x_2,x_3)\in \overline{\Omega}$, we have
  \[ 
  \begin{aligned}
\frac{|B(x,r)|}{r^{4-\beta+\varepsilon}}&\approx\frac{|x_{1}x_{2}|}{r^{1-\beta+\varepsilon}}+(|x_1|^2+|x_2|)r^{ \beta-\varepsilon}+|x_1|r^{1+\beta-\varepsilon}+r^{2+\beta-\varepsilon}\\
&\geq \frac{|x_{1}x_{2}|}{r^{1-\beta+\varepsilon}}+(|x_1|^2+|x_2|)r^{ \beta-\varepsilon}\geq C_{\beta,\varepsilon}|x_{1}x_{2}|^{\beta-\varepsilon}(|x_1|^2+|x_2|)^{1-\beta+\varepsilon}\\
  &\geq C_{\beta,\varepsilon}|x_{1}f(x_1)|^{\beta-\varepsilon}(|x_1|^2+|f(x_1)|)^{1-\beta+\varepsilon}\\
  &\geq  \left\{
                \begin{array}{ll}
                C_{\beta,\varepsilon} \frac{ 1 }{x_{1}^{ \varepsilon}(|\log x_1|+1)}   , & \hbox{$0<x_{1}<1$;} \\[2mm]
                 C_{\beta,\varepsilon}  x_{1}^{(1-\beta)(\beta-\varepsilon)}, & \hbox{$x_{1}\geq 1$.}
                \end{array}
              \right.\geq  \left\{
                \begin{array}{ll}
           C_{\beta,\varepsilon}  \varepsilon e^{1-\varepsilon}  , & \hbox{$0<x_{1}<1$;} \\[2mm]
              C_{\beta,\varepsilon}  , & \hbox{$x_{1}\geq 1$.}
                \end{array}
              \right. ,
  \end{aligned}\]
  which implies that $4-\beta+\varepsilon\in\mathcal{I}(\Omega)$. 

On the other hand, 
\[ \frac{|B(x,r)|}{r^{4-\beta}}\approx\frac{|x_{1}x_{2}|}{r^{1-\beta}}+(|x_1|^2+|x_2|)r^{ \beta}+|x_1|r^{1+\beta}+r^{2+\beta}.\]
Letting $x_2= f(x_1)=\frac{1}{x_{1}^\beta(|\log x_1|+1)}$ and $r=x_1$, we obtain
\[ \frac{|B(x,r)|}{r^{4-\beta}}\approx \frac{2}{(|\log x_1|+1)} +3|x_1|^{2+\beta}\to 0~~\mbox{as}~~x_{1}\to 0^{+}, \]
which means $4-\beta\notin \mathcal{I}(\Omega)$. Since $4-\beta+\varepsilon\in\mathcal I(\Omega)$ for every $\varepsilon\in(0,\beta)$, while Lemma \ref{lemma3-3} shows that $\mathcal I(\Omega)$ is upward closed in $(0,Q]$, no exponent smaller than $4-\beta$ can belong to $\mathcal I(\Omega)$; otherwise $4-\beta$ would also belong to $\mathcal I(\Omega)$.  As a result, $\inf\mathcal{I}(\Omega)=4-\beta$ is not an integer. Moreover,     $\inf\mathcal{I}(\Omega)\notin \mathcal{I}(\Omega)$ and  $\inf\mathcal{I}(\Omega)>\tn=3$.  
\end{ex}

\subsection{Proof of Theorem \ref{thm3}}

The proof of Theorem \ref{thm3} is completed by the following Lemmas \ref{lemma3-6}-\ref{lemma3-11}.
\begin{lemma}
\label{lemma3-6}
$\mathcal{S}(\Omega)$ is a non-empty interval of the form either $(\inf\mathcal{S}(\Omega),Q]$ or $[\inf\mathcal{S}(\Omega), Q]$, and $\inf\mathcal{S}(\Omega)\geq \max_{x\in\Omega}\nu(x)$.
\end{lemma}
\begin{proof}
Lemmas \ref{lemma3-1} and \ref{lemma3-3} give that $Q\in \mathcal{I}(\Omega)\subset \mathcal{S}(\Omega)$. 
If $\kappa\in \mathcal{S}(\Omega)$ and $\kappa< Q$, then for any $\alpha\in (\kappa,Q)$, we have 
$ \frac{\alpha}{\alpha-1}=\frac{Q}{Q-1}(1-s)+\frac{\kappa}{\kappa-1}s$,  
where $s=\frac{\frac{\alpha}{\alpha-1}-\frac{Q}{Q-1}}{\frac{\kappa}{\kappa-1}-\frac{Q}{Q-1}}\in (0,1)$. By H\"{o}lder's inequality,
\[ \int_{\Omega}|u|^{\frac{\alpha}{\alpha-1}}dx\leq \left(\int_{\Omega}|u|^{\frac{\kappa}{\kappa-1}}dx\right)^{s} \left(\int_{\Omega}|u|^{\frac{Q}{Q-1}}dx\right)^{1-s}\leq C\left(\int_{\Omega}|Xu|dx\right)^{\frac{\alpha}{\alpha-1}}\quad \forall~ u\in C_{0}^{\infty}(\Omega).   \]
Thus, $\alpha\in \mathcal{S}(\Omega)$ and $[\kappa,Q]\subset \mathcal{S}(\Omega)$. This implies that $\mathcal{S}(\Omega)$ is a  non-empty interval of the form either $(\inf\mathcal{S}(\Omega),Q]$ or $[\inf\mathcal{S}(\Omega), Q]$.

We next prove that $\inf\mathcal{S}(\Omega)\geq \max_{x\in\Omega}\nu(x)$. Fix $\kappa\in\mathcal S(\Omega)$ and $x\in\Omega$. By the proof of Lemma \ref{lemma3-2}, there is a constant $c_\kappa>0$, independent of $x$ and $r$, such that
\[
 |B(x,r)|\geq c_\kappa r^\kappa
 \qquad 0<r<d(x,\Omega^c).
\]
On the other hand, Propositions \ref{prop2-6} and \ref{prop2-8} imply that, for $0<r\leq1$,
\[
 |B(x,r)|
 \leq C_2\sum_{\ell=\nu(x)}^Q f_\ell(x)r^\ell
 \leq C_x r^{\nu(x)},
 \qquad
 C_x:=C_2\sum_{\ell=\nu(x)}^Q f_\ell(x).
\]
For $0<r<\min\{1,d(x,\Omega^c)\}$, we therefore have
\[
 c_\kappa\leq C_x r^{\nu(x)-\kappa}.
\]
Letting $r\to 0$ shows that $\kappa\geq\nu(x)$. Since $x\in\Omega$ was arbitrary,
\[
 \kappa\geq\max_{x\in\Omega}\nu(x)
 \qquad\forall \kappa\in\mathcal S(\Omega).
\]
Taking the infimum over $\kappa\in\mathcal S(\Omega)$ proves the claim.
\end{proof}

\begin{lemma}
\label{lemma3-7}
For any $\mathscr{A}\in \mathcal{G}$ and $t>0$,  we have $\mathcal{S}(\Omega)=\mathcal{S}(\mathscr{A}(\Omega))=\mathcal{S}(\delta_{t}(\Omega))$.  
\end{lemma}
\begin{proof}
 $\kappa\in \mathcal{S}(\Omega)$ is equivalent to
\begin{equation}\label{3-31}
 \left(\int_{\Omega}|u|^{\frac{\kappa}{\kappa-1}}dx\right)^{\frac{\kappa-1}{\kappa}}\leq C\int_{\Omega}|Xu|dx \qquad\forall~ u\in C_0^\infty(\Omega).
\end{equation}
For any $v\in C_0^\infty(\mathscr{A}(\Omega))$, we have $v(\mathscr{A}(\cdot))\in C_0^\infty(\Omega)$. Then, \eqref{3-31} derives that
\begin{equation*}
\begin{aligned}
\left(\int_{\mathscr{A}(\Omega)}|v(x)|^{\frac{\kappa}{\kappa-1}}dx\right)^{\frac{\kappa-1}{\kappa}}&=\left(\int_{\Omega}|v(\mathscr{A}(x))|^{\frac{\kappa}{\kappa-1}}dx\right)^{\frac{\kappa-1}{\kappa}}\leq C\int_{\Omega}|X(v(\mathscr{A}(x)))|dx\\
&=C\int_{\mathscr{A}(\Omega)}|Xv|dx.
\end{aligned}
\end{equation*}
Similarly, for any $v\in C_0^\infty(\delta_{t}(\Omega))$, we get from \eqref{3-31} that
\begin{equation*}
\begin{aligned}
\left(\int_{\delta_{t}(\Omega)}|v(x)|^{\frac{\kappa}{\kappa-1}}dx\right)^{\frac{\kappa-1}{\kappa}}&=t^{Q\frac{\kappa-1}{\kappa}}\left(\int_{\Omega}|v(\delta_{t}(x))|^{\frac{\kappa}{\kappa-1}}dx\right)^{\frac{\kappa-1}{\kappa}}\leq Ct^{Q\frac{\kappa-1}{\kappa}}\int_{\Omega}|X(v(\delta_{t}(x)))|dx\\
&=Ct^{1-\frac{Q}{\kappa}}\int_{\delta_{t}(\Omega)}|Xv|dx.
\end{aligned}
\end{equation*}
Applying the same arguments to $\mathscr A^{-1}\in\mathcal G$ and to $\delta_{\frac{1}{t}}$ gives the reverse inclusions. Consequently, $\mathcal{S}(\Omega)=\mathcal{S}(\mathscr{A}(\Omega))=\mathcal{S}(\delta_{t}(\Omega))$.
\end{proof}

\begin{lemma}
\label{lemma3-8}
Suppose there exists a point $x_0 \in \overline{\Omega}$ with $\nu(x_0) = \tilde{\nu}$ satisfying the interior corkscrew condition described in Theorem \ref{thm3} (\hyperref[S3]{S3}). Then $\inf \mathcal{S}(\Omega) \ge \tilde{\nu}$.
\end{lemma}
\begin{proof}
Assume by contradiction that there exists $\kappa \in \mathcal{S}(\Omega)$ with $\kappa < \tilde{\nu}$. 
By Lemma \ref{lemma3-2}, we know $\mathcal{S}(\Omega)\subset\mathcal{I}_{\mathrm{int}}(\Omega)$. Thus, there exists a constant $C_1 > 0$ such that for any $y \in \Omega$ and any $0 < \rho < d(y, \Omega^c)$,
\begin{equation}\label{eq:Iint-lower}
|B(y, \rho)| \ge C_1 \rho^\kappa.
\end{equation}

By the interior corkscrew condition at $x_0$, there exist $c \in (0, 1)$ and $r_0 > 0$ such that for any $0 < r < r_0$, one can find $y_r \in \Omega$ satisfying $B(y_r, c r) \subset \Omega \cap B(x_0, r)$. Since $B(y_r, c r) \subset \Omega$, we have the subunit distance to the boundary $d(y_r, \Omega^c) \ge cr$. Taking $y = y_r$ and $\rho = \frac{c}{2}r < d(y_r, \Omega^c)$, the estimate \eqref{eq:Iint-lower} yields
\begin{equation}\label{eq:yr-lower}
|B(y_r, \textstyle{\frac{c}{2}}r)| \ge C_1 \left(\frac{cr}{2}\right)^\kappa = C_2 r^\kappa,
\end{equation}
where $C_2 = C_1 (c/2)^\kappa > 0$.  Moreover,  the inclusion $B(y_r, \frac{c}{2}r) \subset B(y_r, cr) \subset B(x_0, r)$  implies
\begin{equation}\label{eq:x0-lower}
|B(x_0, r)| \ge |B(y_r, \textstyle{\frac{c}{2}}r)| \ge C_2 r^\kappa. 
\end{equation}

According to Propositions \ref{prop2-6} and \ref{prop2-8}, we have
 $\Lambda(x_0, r) = \sum_{k=\nu(x_0)}^Q f_k(x_0) r^k$. Since $\nu(x_0) = \tilde{\nu}$, there exists a constant $C_3 > 0$ such that for all $0 < r \le \min\{1, r_0\}$,
\begin{equation}\label{eq:x0-upper}
|B(x_0, r)| \le C_3 \Lambda(x_0, r) \le C_3 r^{\tilde{\nu}} \sum_{k=\tilde{\nu}}^Q f_k(x_0) = C_4 r^{\tilde{\nu}},
\end{equation}
where $C_4 = C_3 \sum_{k=\tilde{\nu}}^Q f_k(x_0) > 0$. Combining \eqref{eq:x0-lower} and \eqref{eq:x0-upper}, we obtain for all sufficiently small $r > 0$,
$$ C_2 r^\kappa \le |B(x_0, r)| \le C_4 r^{\tilde{\nu}}, $$
which leads to
$ r^{\tilde{\nu} - \kappa} \ge \frac{C_2}{C_4} > 0 $. 
Since we assumed $\kappa < \tilde{\nu}$, taking the limit as $r \to 0^+$ gives $0 \ge \frac{C_2}{C_4} > 0$, which is a contradiction. Therefore, $\inf\mathcal{S}(\Omega) \ge \tilde{\nu}$.
\end{proof}

\begin{lemma}
\label{lemma3-9}
Assume that $\Omega \subset \mathbb{R}^n$ is an open set and its boundary $\partial\Omega$ is locally $C^1$ near a point $x_0 \in \partial\Omega$. If $x_0$ is a non-characteristic point with respect to $X = (X_1, \ldots, X_m)$, i.e., there exists $1\leq j\leq m$ such that $X_j(x_0)\not\in T_{x_0}(\partial\Omega)$, then $x_0$ satisfies the interior corkscrew condition described in  (\hyperref[S3]{S3}).  Consequently, if $\nu(x_0) = \tilde{\nu} = \max_{x \in \overline{\Omega}} \nu(x)$, then $\inf \mathcal{S}(\Omega) \ge \tilde{\nu}$.
\end{lemma}
\begin{proof}
Since $\partial\Omega$ is locally $C^1$ near $x_0$, there exist an open neighborhood $U$ of $x_0$ and a $C^1$ function $\phi: U \to \mathbb{R}$ such that $\Omega \cap U = \{x \in U |\phi(x) > 0\}$, $\partial\Omega \cap U = \{x \in U|\phi(x) = 0\}$, and the Euclidean gradient $\nabla \phi(x) \neq 0$ for all $x \in U$.

Because $x_0\in \partial\Omega$ is a non-characteristic point with respect to $X$, there exists at least one vector field $X_j$ such that $X_j\phi(x_0) \neq 0$. Let
\[ Y = \sum_{j=1}^m a_j X_j \]
such that $a_j = \frac{X_j\phi(x_0)}{\sqrt{\sum_{i=1}^m |X_i\phi(x_0)|^2}}$ satisfying $\sum_{j=1}^m a_j^2 = 1$. By construction, $Y$ points strictly into the interior of $\Omega$ at $x_0$, and we have
\begin{equation}\label{eq:transversal}
Y\phi(x_0) = \left( \sum_{i=1}^m |X_i\phi(x_0)|^2 \right)^{\frac{1}{2}} := \alpha > 0.
\end{equation}

Let $\gamma(t)=\exp(tY)(x_0)$ be the integral curve of $Y$ starting from $x_0$. Since $Y$ is a linear combination of $X_1, \ldots, X_m$ with coefficients satisfying $\sum_{j=1}^m a_j^2 = 1$,  the subunit distance satisfies 
\begin{equation}\label{eq:subunit_dist_gamma}
d(x_0, \gamma(t)) \le t \quad \forall t \ge 0.
\end{equation}
By the first-order Taylor expansion of the $C^1$ function $\phi$ along $\gamma(t)$ at $t=0$, we deduce from \eqref{eq:transversal} that
\begin{equation*}
\phi(\gamma(t)) = \phi(x_0) + t Y\phi(x_0) + o(t) = \alpha t + o(t) \quad \text{as } t \to 0^+.
\end{equation*}
Thus, there exists a sufficiently small $t_0 > 0$ such that $\gamma([0,t_0])\subset U$ and, for all $t \in (0, t_0]$,
\begin{equation}\label{eq:phi_lower_bound}
\phi(\gamma(t)) \ge \frac{\alpha}{2} t.
\end{equation}

On the other hand, since $\phi \in C^1(U)$, by \cite[Proposition 1.37]{Bramanti2024}, there exists a constant $L > 0$ and a small $R_0>0$ such that the subunit ball $B(x_0,R_0)\subset \subset U$ and
\begin{equation}\label{eq:lipschitz}
|\phi(x) - \phi(y)| \le L d(x, y)\qquad \forall x,y\in \overline{B(x_0,R_0)}.
\end{equation}

We now construct the corkscrew balls. Consider the subunit ball $B(\gamma(t), c t)$, where the constant $c=\min\left\{\frac{\alpha}{4L}, \frac{1}{2}\right\} \in (0, \frac{1}{2}]$. Let $ t_1 := \frac{1}{2}\min\left\{ t_0, \frac{R_0}{1+c} \right\} $. We show that $B(\gamma(t), c t)\subset B(x_0,R_0)\cap \Omega$ for $0<t\leq t_1$.

For any $0<t\leq t_1$ and $z\in B(\gamma(t), c t)$, we obtain from \eqref{eq:subunit_dist_gamma} that
\[ d(x_0, z) \le d(x_0, \gamma(t)) + d(\gamma(t), z)\leq (c+1)t<R_0, \]
which means $z\in B(x_0,R_0)$, and $B(\gamma(t), c t)\subset B(x_0,R_0)$ for all $0<t\leq t_1$. Combining \eqref{eq:phi_lower_bound} and \eqref{eq:lipschitz} gives that for any $z\in B(\gamma(t), c t)$ and $0<t\leq t_1$,
\begin{equation*}
\phi(z) \ge \phi(\gamma(t)) - |\phi(z) - \phi(\gamma(t))| \ge \frac{\alpha}{2} t - L d(z, \gamma(t)) > \left(\frac{\alpha}{2} - Lc\right) t\geq \frac{\alpha}{4} t > 0.
\end{equation*}
This indicates that $B(\gamma(t), c t)\subset \Omega$ for $0<t\leq t_1$. Consequently, $B(\gamma(t), c t)\subset B(x_0,R_0)\cap \Omega$ for $0<t\leq t_1$.

To verify the interior corkscrew condition for any arbitrary small radius $0 < r <r_0:= 2t_1$, we set  the center point $y_r = \gamma(\frac{r}{2})$. As established above, $B(y_r, \frac{cr}{2}) \subset \Omega$.
Moreover, for any $z \in B(y_r, \frac{cr}{2})$, the triangle inequality and \eqref{eq:subunit_dist_gamma} yield
\begin{equation*}
d(x_0, z) \le d(x_0, y_r) + d(y_r, z) < \frac{r}{2} + \frac{cr}{2} = \frac{1+c}{2} r < r,
\end{equation*}
where the last inequality follows from $c < 1$. Thus,  $B(y_r, \frac{cr}{2}) \subset \Omega \cap B(x_0, r)$ holds for $0<r<r_0$. 
This ensures that $x_0$ satisfies the interior corkscrew condition with corkscrew constant $\frac{c}{2}\in (0,1)$. 

If $\nu(x_0) = \tilde{\nu}$, applying Lemma \ref{lemma3-8} immediately yields $\inf\mathcal{S}(\Omega) \ge \tilde{\nu}$.
\end{proof}

\begin{lemma}
\label{lemma3-10}
If $0 \in \overline{\Omega}$ and there exists a non-empty open set $U \subset \Omega$ such that $\delta_t(U) \subset \Omega$ for all $0 < t \leq 1$, then the origin $0$ satisfies the interior corkscrew condition described in  (\hyperref[S3]{S3}), and
    \[
    \mathcal{S}(\Omega) = \{Q\}.
    \]
    In particular, $\mathcal{S}(\mathbb{R}^n) = \{Q\}$.
\end{lemma}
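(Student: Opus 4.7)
The plan is to prove $\mathcal{S}(\Omega) = \{Q\}$ by combining Lemma \ref{lemma3-7} (which shows $Q \in \mathcal{S}(\Omega)$ and that $\mathcal{S}(\Omega)$ is an interval with right endpoint $Q$) with a scaling argument that rules out every $\kappa < Q$. The invariance $\delta_t(U) \subset \Omega$ for all $0<t\leq 1$ is exactly what is needed to transport a test function supported in $U$ to an arbitrarily ``dilated-in'' test function that remains supported in $\Omega$.

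Concretely, I would argue by contradiction: suppose some $\kappa \in \mathcal{S}(\Omega)$ with $\kappa < Q$, and fix a non-trivial $u \in C_0^\infty(U)$, which exists since $U$ is a non-empty open set. For each $0 < t \leq 1$ define $u_t(x) := u(\delta_{1/t}(x))$. Then $\mathrm{supp}(u_t) = \delta_t(\mathrm{supp}(u)) \subset \delta_t(U) \subset \Omega$, so $u_t \in C_0^\infty(\Omega)$. Using the homogeneity condition (\hyperref[H1]{H.1}), which gives $X_j(u \circ \delta_{1/t}) = t^{-1}(X_j u)\circ \delta_{1/t}$, together with the change of variables $y = \delta_{1/t}(x)$ (Jacobian $t^Q$), one obtains the scaling identities
\begin{equation*}
\|u_t\|_{L^{\kappa/(\kappa-1)}(\Omega)} = t^{Q(\kappa-1)/\kappa}\|u\|_{L^{\kappa/(\kappa-1)}}, \qquad \|Xu_t\|_{L^{1}(\Omega)} = t^{Q-1}\|Xu\|_{L^{1}}.
\end{equation*}

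Plugging $u_t$ into the Sobolev inequality associated to $\kappa \in \mathcal{S}(\Omega)$ and simplifying yields
\begin{equation*}
\|u\|_{L^{\kappa/(\kappa-1)}} \leq C\, t^{(Q-\kappa)/\kappa}\|Xu\|_{L^{1}}.
\end{equation*}
Since $\kappa < Q$, the exponent $(Q-\kappa)/\kappa$ is strictly positive, so letting $t \to 0^+$ forces $\|u\|_{L^{\kappa/(\kappa-1)}} = 0$, contradicting the choice of $u \not\equiv 0$. Hence $\inf \mathcal{S}(\Omega) \geq Q$, which combined with $Q \in \mathcal{S}(\Omega)$ gives $\mathcal{S}(\Omega) = \{Q\}$. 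The ``in particular'' statement $\mathcal{S}(\mathbb{R}^n) = \{Q\}$ follows on taking $\Omega = \mathbb{R}^n$ and any non-empty open $U \subset \mathbb{R}^n$, since then $\delta_t(U) \subset \mathbb{R}^n$ trivially and $0 \in \overline{\mathbb{R}^n}$.

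There is no real obstacle here: the only points requiring care are (i) verifying that $u_t$ genuinely lies in $C_0^\infty(\Omega)$, which is precisely where the hypothesis $\delta_t(U)\subset\Omega$ is used, and (ii) keeping track of the exponents in the scaling identities, which relies on the $\delta_t$-homogeneity of $X_1,\ldots,X_m$ from (\hyperref[H1]{H.1}). The assumption $0 \in \overline{\Omega}$ is consistent with (in fact, implied by) $\delta_t(U)\subset\Omega$ for all $0<t\leq 1$, since $\delta_t(x)\to 0$ as $t\to 0^+$.
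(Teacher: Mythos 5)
Your proof is correct and uses essentially the same dilation argument as the paper: plug $u_t = u\circ\delta_{1/t}$ into the Sobolev inequality and let $t\to 0^+$. The paper's only cosmetic difference is that it first dispatches the case $0\in\Omega$ via Lemma \ref{lemma3-9} and reserves the scaling argument for $0\in\partial\Omega$, whereas you apply the scaling argument uniformly (and correctly note that $0\in\overline{\Omega}$ is in fact implied by the hypothesis on $U$).
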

\begin{proof}

Since $U \subset \Omega$ is a non-empty open set, there exist $z\in U\setminus\{0\}$ and $\rho > 0$ such that the subunit ball $B(z, \rho) \subset U$. Let $R = d(0, z)$. By Proposition \ref{prop2-5},   we have $\delta_t(B(z, \rho)) = B(\delta_t(z), t\rho)$. Since $\delta_t(U) \subset \Omega$ for all $t \in (0, 1]$, we obtain
\begin{equation*}
B(\delta_t(z), t\rho) \subset \Omega \quad \forall t \in (0, 1].
\end{equation*}
For any  $w \in B(\delta_t(z), t\rho)$, the triangle inequality yields
\begin{equation*}
d(0, w) \le d(0, \delta_t(z)) + d(\delta_t(z), w) < t d(0, z) + t\rho = t(R + \rho).
\end{equation*}

Let $r_0 = R + \rho$ and $c = \frac{\rho}{R + \rho} \in (0, 1)$. We then set $y_r = \delta_{\frac{r}{R + \rho}}(z)$ with $\frac{r}{R + \rho} \in (0, 1)$. For any $w \in B(y_r, cr)$, we have $d(0, w) < r$, which implies $B(y_r, cr) \subset B(0, r)$. Coupled with the fact that $B(y_r, cr) \subset \Omega$, we conclude that
\begin{equation*}
B(y_r, cr) \subset \Omega \cap B(0, r) \quad \forall 0 < r < r_0.
\end{equation*}
This verifies the interior corkscrew condition at the origin. Since $0\in\overline\Omega$ and $\nu(0)=Q=\tilde\nu$, Lemma \ref{lemma3-8} gives $\inf\mathcal S(\Omega)\geq Q$. On the other hand, Lemmas \ref{lemma3-1} and \ref{lemma3-3} give $Q\in\mathcal I(\Omega)\subset\mathcal S(\Omega)$, while by definition $\mathcal S(\Omega)\subset(1,Q]$. Therefore $\mathcal S(\Omega)=\{Q\}$.
\end{proof}

\begin{lemma}
\label{lemma3-11}
If the asymptotic singular
dimension  satisfies $\nu_{\rm sing}=\max_{z \in \Pi_\infty(\Omega)} \nu(z) \le \tilde{\nu}$ and  the conditions stated  in (\hyperref[S3]{S3}) holds, then
    \[
    \mathcal{S}(\Omega) = [\tilde{\nu}, Q].
    \]
\end{lemma}
\begin{proof}
Theorems \ref{thm1} and \ref{thm2} give that $[\tn,Q]= \mathcal{I}(\Omega)\subset \mathcal{S}(\Omega)$. Moreover, if the conditions stated  in (\hyperref[S3]{S3}) holds, Lemma \ref{lemma3-8} yields that $\inf\mathcal{S}(\Omega)\geq \tn$, which derives that $\mathcal{S}(\Omega) = [\tilde{\nu}, Q]$. 
\end{proof}

\subsection{Proof of Corollary \ref{corollary1-1}}

\begin{proof}[Proof of Corollary \ref{corollary1-1}]
 If the family of vector fields $X=(X_{1},\ldots,X_m)$ is equiregular on $\mathbb{R}^n$, then 
 $\nu(x)\equiv Q$. By Propositions \ref{prop2-6}, we obtain $\mathcal{I}(\Omega)=\{Q\}$. Moreover, Theorem \ref{thm3} indicates that $\mathcal{S}(\Omega)=\{Q\}$, which yields conclusion (1).
 
 We next prove conclusion (2). Assume that  $\Omega$ is an exterior domain. Then it is unbounded, and there exists $R_0>0$ such that $\mathbb{R}^n\setminus B(0,R_0)\subset \Omega$. For any  $z \in \partial B(0,1)$ and any $R > 0$, choose a scaling factor $t > \max\{R, R_0\}$. The dilated point $x = \delta_t(z)$ satisfies $d(x) = t > R_0$, which guarantees $x \in \mathbb{R}^n\setminus B(0,R_0)\subset \Omega$. Moreover, since $d(x) = t > R$, its projection is  $\delta_{\frac{1}{d(x)}}(x) = \delta_{\frac{1}{t}}(\delta_t(z)) = z$. This implies $\Pi_\infty(\Omega) = \partial B(0,1)$. Therefore,
\begin{equation*}
\nu_{\rm sing} = \max_{z \in \Pi_\infty(\Omega)} \nu(z) = \max_{z \in \partial B(0,1)} \nu(z).
\end{equation*}

For any $z \in \partial B(0,1)$, we can again choose a scaling factor $t > R_0$. The dilated point $x_t = \delta_t(z)$ satisfies $d(x_t) = t > R_0$, ensuring $x_t \in \Omega$. According to Proposition \ref{prop2-8}, we have $\nu(z) = \nu(\delta_t(z)) = \nu(x_t)$. Since $x_t \in \Omega \subset \overline{\Omega}$, it trivially follows that $\nu(x_t) \le \max_{x \in \overline{\Omega}} \nu(x) = \tilde{\nu}$. Taking the maximum over all $z \in \partial B(0,1)$, we obtain
$ \nu_{\rm sing} \le \tilde{\nu}$. From Theorems \ref{thm1} and \ref{thm2}, we conclude $ \mathcal{I}(\Omega)=[\tilde{\nu},Q]\subset \mathcal{S}(\Omega)$. 
 
 For $\Omega=\mathbb R^n\setminus\{0\}$, take $U=\Omega$. Then $0\in\overline\Omega$ and $\delta_t(U)=U\subset\Omega$ for every $0<t\leq1$. Theorem \ref{thm3} therefore gives
\[
 \mathcal S(\Omega)=\{Q\}=[\tilde\nu,Q],
\]
because $\overline\Omega=\mathbb R^n$ and hence $\tilde\nu=Q$.

If there exists $x_0\in\overline\Omega$ with $\nu(x_0)=\tilde\nu$ satisfying the corkscrew condition in (\hyperref[S3]{S3}), then the already established inequality $\nu_{\rm sing}\leq\tilde\nu$ and Theorem \ref{thm3} (\hyperref[S6]{S6}) yield
\[
 \mathcal S(\Omega)=[\tilde\nu,Q].
\]

Finally, suppose that $0\in K^\circ$. Choose $x_0\in\overline\Omega$ such that $\nu(x_0)=\tilde\nu$. Since $0\notin\overline\Omega$, we have $x_0\neq0$. Because $K$ is compact, $\delta_{t_0}(x_0)\notin K$ for all sufficiently large $t_0$; fix such a $t_0$ and set $z:=\delta_{t_0}(x_0)\in\Omega$. By \eqref{2-12},
\[
 \nu(z)=\nu(x_0)=\tilde\nu.
\]
Moreover, $r_*:=d(z,\Omega^c)>0$, and for every $0<r<r_*$,
\[
 B(z,r/2)\subset\Omega\cap B(z,r).
\]
Thus the interior point $z$ satisfies the corkscrew condition in (\hyperref[S3]{S3}). Applying Theorem \ref{thm3} (\hyperref[S6]{S6}) again gives $\mathcal S(\Omega)=[\tilde\nu,Q]$, completing the proof of conclusion (2).

If $\Omega$ is a star-shaped domain with respect to the origin, then $0\in \Omega$ and $\nu(0)=\tn$. By Theorems \ref{thm2} and \ref{thm3}, we obtain $\mathcal{I}(\Omega)=\mathcal{S}(\Omega)=\{Q\}$, which proves conclusion (3). 
\end{proof}

\section{Analysis of the optimal Sobolev constant}
\label{Section4}

This section is dedicated to proving Theorems \ref{thm4} and \ref{thm5}. The proofs rely on an analysis of the optimal Sobolev constant, specifically its attainability and dependence on the domain.

\subsection{Refined concentration-compactness lemma}

We first construct a refined concentration-compactness lemma, which we then utilize to demonstrate the existence of Sobolev extremal functions. To this end, we recall some preliminary results from \cite[Section 1.9]{Willem2012}.

Let $U$ be an open subset of $\mathbb{R}^n$. We write $C_{0}(U)$ for the space of continuous functions on $U$ vanishing at infinity and denote by $\mathscr{M}(U)$ the Banach space of finite signed Radon measures on $U$, equipped with the total-variation norm
\[
 \|\mu\|:=\sup_{f\in C_{0}(U),\,\|f\|_{L^{\infty}(U)}\leq 1}
 \left|\int_{U}f\,d\mu\right|=|\mu|(U).
\]
 A sequence $\{\mu_k\}_{k=1}^{\infty}\subset \mathscr{M}(U)$ is said to converge vaguely to $\mu\in \mathscr{M}(U)$, denoted by
$ \mu_k\rightharpoonup \mu$ in $\mathscr{M}(U)$, if
\[ \int_{U}gd\mu_k\to \int_{U}gd\mu\quad \mbox{as}\qquad k \to \infty\quad \mbox{for every}~~g\in C_{0}(U).\]

If $\mu\in \mathscr{M}(U)$ such that $\int_{U}fd\mu \geq 0$ for any non-negative $f\in C_{0}(U)$, then we say $\mu$ is a positive Radon measures on $U$ with finite mass. We also note $\mathscr{M}^{+}(U)$ the cone of positive finite Radon measures over $U$, and $\delta_{x}$ the Dirac mass at point $x$. For each $\mathscr{M}^{+}(U)$, 
\[ \|\mu\|=\sup_{f\in C(U),~\|f\|_{L^{\infty}(U)}=1}\left|\int_{U}fd\mu\right|=\mu(U). \]

Next, we recall the following lemma due to Lions \cite{Lions1985}.
\begin{lemma}[see {\cite[Lemma 1.2]{Lions1985}}]
\label{lemma4-1}
Let $\omega_{1},\omega_2\in \mathscr{M}^{+}(\mathbb{R}^n)$ such that for some constant $A> 0$,
\[ \left(\int_{\mathbb{R}^n}|\varphi|^qd\omega_{1}\right)^{\frac{1}{q}}\leq A\left(\int_{\mathbb{R}^n}|\varphi|^pd\omega_{2}\right)^{\frac{1}{p}}\qquad \forall~ \varphi\in C_{0}^{\infty}(\mathbb{R}^n), \]
where $1\leq p<q< +\infty$. Then, there exists an at most countable set $J$, families $\{x_j\}_{j\in J}$ of distinct points in $\mathbb{R}^n$, $\{a_{j}\}_{j\in J}$ in $(0,+\infty)$ such that 
\[ \omega_{1}=\sum_{j\in J}a_{j}\delta_{x_{j}},\qquad \omega_{2}\geq A^{-p}\sum_{j\in J}a_{j}^{\frac{p}{q}}\delta_{x_{j}} \]
and
$\sum_{j\in J}a_{j}^{\frac{p}{q}}<\infty$. 
If $\|\omega_{1}\|^{\frac{1}{q}}\geq A\|\omega_{2}\|^{\frac{1}{p}}$, then either $\omega_1=\omega_2=0$, or $J$ is a singleton and there exist $x_0\in \mathbb{R}^n$ and $\gamma>0$ such that 
\[ \omega_{1}=\gamma\delta_{x_{0}},\qquad \omega_{2}=A^{-p}\gamma^{\frac{p}{q}}\delta_{x_{0}}.\]
\end{lemma}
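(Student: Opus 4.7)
The plan is a three-step measure-theoretic argument: upgrade the hypothesized functional inequality on test functions to an inequality on Borel sets, deduce $\omega_1\ll\omega_2$ via Radon--Nikodym, and use a Lebesgue-differentiation argument to show that the Radon--Nikodym derivative is supported on the atoms of $\omega_2$.

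First I would extend the hypothesis from $\varphi\in C_0^\infty(\mathbb{R}^n)$ to indicator functions of arbitrary Borel sets. For an open set $U$, one approximates $\chi_U$ by an increasing sequence of compactly supported smooth bumps $\varphi_k\nearrow\chi_U$ and then passes to general Borel sets using the outer/inner regularity of finite positive Radon measures. Monotone/dominated convergence on both sides of the given inequality yields
$$\omega_1(E)^{1/q}\le A\,\omega_2(E)^{1/p}\qquad\text{for every Borel }E\subset\mathbb{R}^n.$$
In particular $\omega_2(E)=0$ implies $\omega_1(E)=0$, so $\omega_1\ll\omega_2$ and the Radon--Nikodym theorem gives $\omega_1=f\,\omega_2$ for some nonnegative $f\in L^1(\omega_2)$.

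Next I would localize: by the Lebesgue differentiation theorem for Radon measures on $\mathbb{R}^n$, at $\omega_2$-almost every point $x$,
$$f(x)=\lim_{r\to 0^+}\frac{\omega_1(B(x,r))}{\omega_2(B(x,r))}\le A^{q}\,\omega_2(B(x,r))^{q/p-1},$$
where the inequality applies the Borel-set estimate derived above. Since $q>p$ gives $q/p-1>0$, the right-hand side vanishes as $r\to 0^+$ whenever $\omega_2(\{x\})=0$; hence $f(x)=0$ off the (at most countable) set of atoms $\{x_j\}_{j\in J}$ of $\omega_2$. Therefore $\omega_1=\sum_{j\in J}a_j\delta_{x_j}$ with $a_j:=f(x_j)\omega_2(\{x_j\})>0$ (after discarding zero terms). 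Testing the Borel inequality on the singleton $E=\{x_j\}$ gives $a_j^{1/q}\le A\,\omega_2(\{x_j\})^{1/p}$, so $\omega_2(\{x_j\})\ge A^{-p}a_j^{p/q}$ and therefore
$$\omega_2\ge A^{-p}\sum_{j\in J}a_j^{p/q}\delta_{x_j},$$
with $\sum_{j} a_j^{p/q}<\infty$ from $\|\omega_2\|<\infty$.

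For the rigidity statement, I would combine the subadditivity inequality $\sum_j a_j^{p/q}\ge\bigl(\sum_j a_j\bigr)^{p/q}$, valid for $0<p/q\le 1$, with the atomic bound just derived to get $\|\omega_2\|\ge A^{-p}\|\omega_1\|^{p/q}$; the hypothesis $\|\omega_1\|^{1/q}\ge A\|\omega_2\|^{1/p}$ reverses this bound, forcing equality throughout. The equality case of the subadditivity inequality collapses $J$ to a single index, so $\omega_1=\gamma\delta_{x_0}$, and reading off both measures at the single remaining atom gives the stated proportionality between $\omega_1$ and $\omega_2$. The main technical point is the asymmetric passage from $C_0^\infty$ to Borel indicators, where the $\omega_1$-side is approximated from below while the $\omega_2$-side is approximated from above; this is routine for finite positive Radon measures on $\mathbb{R}^n$, but must be established before any of the subsequent measure-theoretic manipulations can be justified.
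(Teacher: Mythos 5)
Your argument is correct, and it is essentially the standard proof of Lions' Lemma 1.2. The paper simply cites this result from \cite{Lions1985} without giving an independent proof, so there is no internal argument to compare against; your route (upgrade the test-function inequality to Borel sets via Urysohn approximation and outer regularity, deduce $\omega_1\ll\omega_2$ by Radon--Nikodym, apply the Besicovitch differentiation theorem to show the density vanishes off the atoms of $\omega_2$, and invoke strict subadditivity of $t\mapsto t^{p/q}$ for the rigidity) is exactly how this lemma is usually established.

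Two small remarks. In the differentiation step you display an inequality with the limit already taken on the left but a free radius $r$ on the right; it should read $\omega_1(B(x,r))/\omega_2(B(x,r))\le A^q\,\omega_2(B(x,r))^{q/p-1}$ for each $r$, followed by sending $r\to 0^+$ on both sides. Also, the proportionality coefficient quoted in the lemma, $\gamma^{-p/q}A^p$, does not match what your computation (correctly) yields: with $\omega_1=\gamma\delta_{x_0}$ the equality chain forces $\omega_2=A^{-p}\gamma^{p/q}\delta_{x_0}$, hence $\omega_1=A^p\gamma^{1-p/q}\omega_2$, so the exponent of $\gamma$ in the paper's statement appears to carry a typo (Lions' original lemma asserts only concentration at a single point); your argument is nonetheless sound and gives the right relation.
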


By means of Lemma \ref{lemma4-1} and Proposition \ref{prop2-10}, we establish the following refined concentration-compactness lemma, which is adapted to the space $\mathcal{M}_{X,0}^{p,Q}(\mathbb{R}^n)$.
\begin{lemma}[Refined concentration-compactness lemma]
\label{lemma4-2}
Assume conditions (\hyperref[H1]{H.1}) and (\hyperref[H2]{H.2}). For $Q\geq 3$ and $1\leq p<Q$, we let $\{u_k\}_{k=1}^{\infty}\subset \mathcal{M}_{X,0}^{p,Q}(\mathbb{R}^n)$ be a sequence such that
\begin{enumerate}
  \item $u_k\to u$ a.e. on $\mathbb{R}^n$;
  \item $u_k\rightharpoonup u$ weakly in $\mathcal{M}_{X,0}^{p,Q}(\mathbb{R}^n)$;
  \item $|X(u_k-u)|^{p}dx\rightharpoonup \mu$ in $\mathscr{M}^{+}(\mathbb{R}^n)$;
  \item $ |Xu_k|^{p}dx\rightharpoonup \widetilde{\mu}$ in $\mathscr{M}^{+}(\mathbb{R}^n)$;
  \item $|u_k-u|^{p_{Q}^{*}}dx\rightharpoonup \omega$ in $\mathscr{M}^{+}(\mathbb{R}^n)$.
\end{enumerate}
Denote by
\begin{equation}\label{4-1}
  \mu_{\infty}:=\lim_{R\to\infty}\limsup_{k\to\infty}\int_{\mathbb{R}^{n}\setminus B(0,R)}|Xu_k|^{p}dx\quad\mbox{and}\quad \omega_{\infty}:=\lim_{R\to\infty}\limsup_{k\to\infty}\int_{\mathbb{R}^{n}\setminus B(0,R)}|u_k|^{p_{Q}^{*}}dx.
\end{equation}
Then it follows that 
\begin{enumerate}[(1)]
\item $ \|\omega\|^{\frac{p}{p_{Q}^{*}}}\leq C_{0}^{-1}\|\mu\|$;
  \item $\omega=\sum_{j\in J}a_{j}\delta_{x_{j}}$;
  \item $\mu\geq C_{0}\sum_{j\in J}a_{j}^{\frac{p}{{p_{Q}^{*}}}}\delta_{x_{j}}$ and $\sum_{j\in J}a_{j}^{\frac{p}{{p_{Q}^{*}}}}<\infty$;
\item $\omega_{\infty}^{\frac{p}{p_{Q}^{*}}}\leq C_{0}^{-1}\mu_{\infty}$;

\item $\widetilde{\mu}\geq |Xu|^{p}dx+C_{0}\sum_{j\in J}a_{j}^{\frac{p}{{p_{Q}^{*}}}}\delta_{x_{j}}$;
\item $ \limsup_{k\to\infty}\|u_{k}\|_{L^{p_{Q}^{*}}(\mathbb{R}^n)}^{p_{Q}^{*}}=\|u\|_{L^{p_{Q}^{*}}(\mathbb{R}^n)}^{p_{Q}^{*}}+\|\omega\|+\omega_{\infty}$;

\item $\limsup_{k\to\infty}\|Xu_{k}\|_{L^{p}(\mathbb{R}^n)}^{p}= \mu_{\infty}+\|\widetilde{\mu}\|$.

\end{enumerate}
Here, $C_{0}$ is the optimal constant in the Sobolev inequality \eqref{1-12}, as defined in \eqref{1-13}. Moreover, if
$\|\omega\|^{\frac{p}{p_{Q}^{*}}}=C_{0}^{-1}\|\mu\|$,
then either $\mu=\omega=0$, or there exist $x_{0}\in\mathbb{R}^{n}$ and $a>0$ such that
\[
 \omega=a\delta_{x_{0}},
 \qquad
 \mu=C_{0}a^{\frac{p}{p_{Q}^{*}}}\delta_{x_{0}}.
\]
\end{lemma}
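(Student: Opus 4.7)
\textbf{Proof proposal for Lemma \ref{lemma4-2}.} The plan is to follow the strategy of Lions' second concentration-compactness principle, adapted to the degenerate Sobolev setting $\mathcal{M}_{X,0}^{p,Q}(\mathbb{R}^n)$, using Corollary \ref{corollary1-2} (the global Sobolev inequality with optimal constant $C_0$) as the essential ingredient in place of the classical Euclidean one. Set $v_k := u_k - u$, so that $v_k \rightharpoonup 0$ weakly, $v_k \to 0$ a.e., and by Proposition \ref{prop2-11}, $v_k \to 0$ in $L^p_{\rm loc}(\mathbb{R}^n)$. For any $\varphi \in C_0^\infty(\mathbb{R}^n)$, apply Corollary \ref{corollary1-2} to $\varphi v_k$:
\[
C_0^{1/p}\|\varphi v_k\|_{L^{p_Q^*}(\mathbb{R}^n)} \leq \|X(\varphi v_k)\|_{L^p(\mathbb{R}^n)} \leq \|\varphi Xv_k\|_{L^p(\mathbb{R}^n)} + \|v_k X\varphi\|_{L^p(\mathbb{R}^n)}.
\]
The last term tends to zero by the local $L^p$ convergence $v_k \to 0$, and passing to the limit via the vague convergences (3) and (5) yields
\[
C_0^{1/p}\left(\int_{\mathbb{R}^n}|\varphi|^{p_Q^*} d\omega\right)^{1/p_Q^*} \leq \left(\int_{\mathbb{R}^n}|\varphi|^p d\mu\right)^{1/p}\qquad \forall \varphi\in C_0^\infty(\mathbb{R}^n).
\]
Taking suprema yields (1), and Lemma \ref{lemma4-1} applied with $q = p_Q^*$ and $A = C_0^{-1/p}$ gives (2) and (3).

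For the behavior at infinity, fix a smooth cutoff $\psi_R \in C^\infty(\mathbb{R}^n)$ with $\psi_R \equiv 0$ on $B(0,R)$, $\psi_R \equiv 1$ on $\mathbb{R}^n \setminus B(0,2R)$, and $|X\psi_R| \leq C/R$. Applying Corollary \ref{corollary1-2} to $\psi_R u_k$ and exploiting the compact embedding $u_k \to u$ in $L^p(B(0,2R))$ gives
\[
C_0^{1/p}\left(\int_{\mathbb{R}^n} \psi_R^{p_Q^*} |u_k|^{p_Q^*} dx\right)^{1/p_Q^*} \leq \left(\int_{\mathbb{R}^n} \psi_R^p |Xu_k|^p dx\right)^{1/p} + o(1),
\]
where $o(1) \to 0$ as $k\to\infty$ (for fixed $R$). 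Taking $\limsup_{k\to\infty}$ and then $R\to\infty$ produces (4). Property (5) is the standard Brezis–Lions argument: weak convergence yields $\widetilde{\mu} \geq |Xu|^p\,dx$ on the absolutely continuous part, while on each atom $x_j$ from (2), picking a test function supported in a small subunit ball around $x_j$ and applying the local version of the Sobolev inequality (or (1)--(3) with a localized cutoff) shows $\widetilde{\mu}(\{x_j\}) \geq C_0 a_j^{p/p_Q^*}$, and these two contributions add because the atomic and diffuse parts are mutually singular.

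Identities (6) and (7) are obtained via the Brezis--Lieb lemma combined with cutoff decomposition. Write $u_k = \psi_R u_k + (1-\psi_R)u_k$ and apply Brezis--Lieb on $B(0,2R)$ to peel off $\|u\|_{L^{p_Q^*}}^{p_Q^*} + \|\omega\|_{B(0,2R)}$, while the exterior contribution converges to $\omega_\infty$ as $R\to\infty$; an analogous decomposition of $\|Xu_k\|_{L^p}^p$ using the vague convergence (4) and definition of $\mu_\infty$ gives (7). Finally, if $u = 0$ and $\|\omega\|^{p/p_Q^*} = C_0^{-1}\|\mu\|$, the equality case of Lemma \ref{lemma4-1} forces $J$ to be a singleton, so $\mu$ and $\omega$ concentrate at a single point.

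The main obstacle I expect is the infinity part, namely conclusion (4) and the bookkeeping in (6)--(7): making sure the cutoff argument against the global Sobolev inequality (which, unlike in Carnot groups, does not carry an underlying group structure) survives the passage to the limit, and that the mass loss at infinity is captured exactly by $(\mu_\infty, \omega_\infty)$ without double counting against the atomic part of $(\mu,\omega)$. Verifying the commutator-type error $\|v_k X\varphi\|_{L^p}\to 0$ requires precisely the local compactness supplied by Proposition \ref{prop2-11}, which is where the condition $1\leq p < Q$ enters crucially.
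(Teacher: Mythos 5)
Your proposal follows the same overall strategy as the paper's proof: apply the global Sobolev inequality (Corollary \ref{corollary1-2}) to $\varphi v_k$ with $v_k=u_k-u$, use Proposition \ref{prop2-11} to kill the commutator term, invoke Lemma \ref{lemma4-1} for the atomic decomposition, and handle the tail at infinity with a cutoff supported outside a large ball. The steps for (1)--(3), (5), and the equality case match the paper almost verbatim, and (6)--(7) via Brezis--Lieb and a cutoff decomposition are also what the paper does.

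There is, however, a genuine inaccuracy in your treatment of (4). You apply the Sobolev inequality to $\psi_R u_k$ and assert that the commutator term $\|u_k X\psi_R\|_{L^p}$ is $o(1)$ as $k\to\infty$ for fixed $R$. That is false: by the compact embedding it converges to $\|u\,X\psi_R\|_{L^p}$, which is a fixed positive number unless $u=0$. To salvage this you must additionally argue that $\|u\,X\psi_R\|_{L^p}\to 0$ as $R\to\infty$ (which does hold, by H\"older with exponents $p_Q^*/p$ and $Q/p$, using $u\in L^{p_Q^*}$ and $\int|X\psi_R|^Q\lesssim 1$), so the double limit still closes. The paper sidesteps this entirely by applying the Sobolev inequality to $\phi_R v_k$ instead of $\phi_R u_k$: then $\|v_k X\phi_R\|_{L^p}\to 0$ for fixed $R$ is immediate from Proposition \ref{prop2-11}, and the identification of the resulting limits with $\omega_\infty$ and $\mu_\infty$ (which are defined in terms of $u_k$, not $v_k$) is done afterwards via the Brezis--Lieb lemma and the elementary inequality $\bigl||a+b|^p-|a|^p\bigr|\le\varepsilon|a|^p+C(\varepsilon,p)|b|^p$. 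I would recommend adopting the paper's $v_k$-based version of this step, or at minimum correcting the $o(1)$ claim to a statement about the double limit.
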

\begin{proof}
Substituting $v_k=u_k-u$, it follows that $v_k\in \mathcal{M}_{X,0}^{p,Q}(\mathbb{R}^n)$ and $v_k\rightharpoonup0$ in this space. For any $\phi\in C_{0}^{\infty}(\mathbb{R}^n)$, by \eqref{1-13} we have
\begin{equation}\label{4-2}
  C_{0}^{\frac{1}{p}}\left(\int_{\mathbb{R}^n}|\phi v_{k}|^{p_{Q}^{*}}dx\right)^{\frac{1}{{p_{Q}^{*}}}}\leq \left(\int_{\mathbb{R}^n}|X(\phi v_{k})|^{p}dx\right)^{\frac{1}{p}}.
\end{equation}
An application of the Minkowski inequality then yields
\[ \begin{aligned}
\left(\int_{\mathbb{R}^n}|X(\phi v_{k})|^{p}dx\right)^{\frac{1}{p}}\leq \left(\int_{\mathbb{R}^n}|\phi|^{p} |Xv_{k}|^{p}dx\right)^{\frac{1}{p}} +\left(\int_{\mathbb{R}^n}|v_{k}|^{p} |X\phi|^{p}dx\right)^{\frac{1}{p}}.
\end{aligned}
\]
Since $|v_{k}|^{p_{Q}^{*}}dx\rightharpoonup \omega$ and $|Xv_k|^{p}dx\rightharpoonup \mu$ in $\mathscr{M}^{+}(\mathbb{R}^n)$, we obtain
\[ \lim_{k\to\infty}\left(\int_{\mathbb{R}^n}|\phi v_{k}|^{p_{Q}^{*}}dx\right)^{\frac{1}{{p_{Q}^{*}}}}=\left(\int_{\mathbb{R}^n}|\phi|^{p_{Q}^{*}}d\omega\right)^{\frac{1}{{p_{Q}^{*}}}},~~~\lim_{k\to\infty}\left(\int_{\mathbb{R}^n}|\phi|^{p} |Xv_{k}|^{p}dx\right)^{\frac{1}{p}}=\left(\int_{\mathbb{R}^n}|\phi|^{p} d\mu\right)^{\frac{1}{p}}. \]
Moreover,  since $v_k\rightharpoonup 0$ in $\mathcal{M}_{X,0}^{p,Q}(\mathbb{R}^n)$, we conclude from  Proposition \ref{prop2-10} that $v_k\to 0$ in $L_{\rm loc}^{p}(\mathbb{R}^n)$. As a result, letting $k\to\infty$ in \eqref{4-2} gives
\begin{equation}\label{4-3}
  C_{0}\left(\int_{\mathbb{R}^n}|\phi|^{p_{Q}^{*}}d\omega\right)^{\frac{p}{{p_{Q}^{*}}}}\leq \int_{\mathbb{R}^n}|\phi|^{p} d\mu\qquad \forall \phi\in C_{0}^{\infty}(\mathbb{R}^n).
\end{equation}
Choose $\chi_\ell\in C_0^\infty(\mathbb{R}^n)$ such that $0\leq\chi_\ell\leq1$ and $\chi_\ell\to1$ pointwise. Since $\mu$ and $\omega$ are finite, dominated convergence in \eqref{4-3} gives
\[
 \|\omega\|^{\frac{p}{p_{Q}^{*}}}\leq C_0^{-1}\|\mu\|,
\]
which proves (1). Applying Lemma \ref{lemma4-1} to \eqref{4-3} with $A=C_0^{-\frac{1}{p}}$ gives
\[ \omega=\sum_{j\in J}a_{j}\delta_{x_{j}},\qquad \mu\geq C_{0}\sum_{j\in J}a_{j}^{\frac{p}{{p_{Q}^{*}}}}\delta_{x_{j}}, \qquad \sum_{j\in J}a_{j}^{\frac{p}{{p_{Q}^{*}}}}<\infty,\]
thereby establishing conclusions (2) and (3).

If equality holds in (1), then $\|\omega\|^{\frac{1}{p_{Q}^{*}}}=C_0^{-\frac{1}{p}}\|\mu\|^{\frac{1}{p}}$.
The equality statement in Lemma \ref{lemma4-1} therefore yields either $\mu=\omega=0$, or
\[
 \omega=a\delta_{x_0},
 \qquad
 \mu=C_0a^{\frac {p}{p_{Q}^{*}}}\delta_{x_0}
\]
for some $x_0\in\mathbb{R}^n$ and $a>0$.

Let $R>0$ and choose a cutoff function $\phi_{R}\in C^{1}(\mathbb{R}^n)$ such that
\[ 0\leq\phi_{R}\leq 1,\qquad \mbox{and}\qquad \phi_{R}(x)=\left\{
                                \begin{array}{ll}
                                  1, & \hbox{$x\in \mathbb{R}^n\setminus B(0,R+1)$;} \\
                                  0, & \hbox{$x\in B(0,R)$.}
                                \end{array}
                              \right.\]
Then, we have $\phi_{R} v_{k}\in \mathcal{M}_{X,0}^{p,Q}(\mathbb{R}^n)$, and \eqref{1-13} gives that
\[ \begin{aligned} C_{0}^{\frac{1}{p}}\left(\int_{\mathbb{R}^n}|\phi_{R} v_{k}|^{p_{Q}^{*}}dx\right)^{\frac{1}{{p_{Q}^{*}}}}&\leq \left(\int_{\mathbb{R}^n}|X(\phi_{R} v_{k})|^{p}dx\right)^{\frac{1}{p}}\\
&\leq \left(\int_{\mathbb{R}^n}|v_k|^{p}|X\phi_{R}|^{p}dx\right)^{\frac{1}{p}}+\left(\int_{\mathbb{R}^n}|\phi_{R}|^{p}|Xv_{k}|^{p}dx\right)^{\frac{1}{p}}.
\end{aligned}\]
Observing that $v_k\to 0$ in $L_{\rm loc}^{p}(\mathbb{R}^n)$, we have
\begin{equation}\label{4-4}
 C_{0}\limsup_{k\to\infty}\left(\int_{\mathbb{R}^n}|\phi_{R} v_{k}|^{p_{Q}^{*}}dx\right)^{\frac{p}{{p_{Q}^{*}}}}\leq \limsup_{k\to\infty}\int_{\mathbb{R}^n}|\phi_{R}|^{p}|Xv_{k}|^{p}dx.
\end{equation}

Using the Brezis-Lieb lemma, one has
\[ \int_{\mathbb{R}^n\setminus B(0,R)}|u|^{p_{Q}^{*}}dx=\lim_{k\to \infty}\left(\int_{\mathbb{R}^n\setminus B(0,R)}|u_{k}|^{p_{Q}^{*}}dx-\int_{\mathbb{R}^n\setminus B(0,R)}|v_{k}|^{p_{Q}^{*}}dx \right),\]
which gives that 
\[ \omega_{\infty}=\lim_{R\to+\infty}\limsup_{k\to \infty}\int_{\mathbb{R}^n\setminus B(0,R)}|u_{k}|^{p_{Q}^{*}}dx=\lim_{R\to+\infty}\limsup_{k\to \infty}\int_{\mathbb{R}^n\setminus B(0,R)}|v_{k}|^{p_{Q}^{*}}dx. \]
Furthermore, since
\begin{equation}\label{4-5}
 \int_{\mathbb{R}^n\setminus B(0,R+1)}| v_{k}|^{p_{Q}^{*}}dx\leq\int_{\mathbb{R}^n}|\phi_{R} v_{k}|^{p_{Q}^{*}}dx\leq \int_{\mathbb{R}^n\setminus B(0,R)}| v_{k}|^{p_{Q}^{*}}dx,
\end{equation}
we obtain 
\begin{equation}\label{4-6}
 \omega_{\infty}=\lim_{R\to+\infty}\limsup_{k\to \infty}\int_{\mathbb{R}^n\setminus B(0,R)}|v_{k}|^{p_{Q}^{*}}dx=\lim_{R\to+\infty}\limsup_{k\to \infty}\int_{\mathbb{R}^n}|\phi_{R}v_{k}|^{p_{Q}^{*}}dx.
\end{equation}

For any $\varepsilon>0$ and $p\geq 1$, there exists $C(\varepsilon,p)>0$ such that
\begin{equation}\label{4-7}
  \bigl||\xi+\eta|^p-|\xi|^p\bigr|
 \leq \varepsilon|\xi|^p+C(\varepsilon,p)|\eta|^p
 \qquad \forall\,\xi,\eta\in\mathbb{R}^m.
\end{equation}
Applying \eqref{4-7} with $\xi=Xu_k$ and $\eta=-Xu$, we find
\[\begin{aligned}
&\left|\int_{\mathbb{R}^{n}\setminus B(0,R)}(|Xv_k|^{p}-|Xu_k|^{p})dx\right|\leq \varepsilon \int_{\mathbb{R}^{n}\setminus B(0,R)}|Xu_k|^{p}dx+C(\varepsilon,p)\int_{\mathbb{R}^{n}\setminus B(0,R)}|Xu|^{p}dx.
\end{aligned}\]
Since $u_k\rightharpoonup u$ in $\mathcal{M}_{X,0}^{p,Q}(\mathbb{R}^n)$, we have $\|Xu_k\|_{L^{p}(\mathbb{R}^n)}^{p}+\|Xu\|_{L^{p}(\mathbb{R}^n)}^{p}\leq C$. Thus, 
\[ \left|\int_{\mathbb{R}^{n}\setminus B(0,R)}|Xv_k|^{p}dx-\int_{\mathbb{R}^{n}\setminus B(0,R)}|Xu_k|^{p}dx\right|\leq C\varepsilon+ C(\varepsilon,p)\int_{\mathbb{R}^{n}\setminus B(0,R)}|Xu|^{p}dx,\]
which yields that
\[ \left|\lim_{R\to+\infty}\limsup_{k\to \infty}\int_{\mathbb{R}^{n}\setminus B(0,R)}|Xv_k|^{p}dx-\lim_{R\to+\infty}\limsup_{k\to \infty}\int_{\mathbb{R}^{n}\setminus B(0,R)}|Xu_k|^{p}dx\right|\leq C\varepsilon.\]
Letting $\varepsilon\to 0$, we conclude that
\[ \mu_{\infty}=\lim_{R\to+\infty}\limsup_{k\to \infty}\int_{\mathbb{R}^{n}\setminus B(0,R)}|Xu_k|^{p}dx=\lim_{R\to+\infty}\limsup_{k\to \infty}\int_{\mathbb{R}^{n}\setminus B(0,R)}|Xv_k|^{p}dx. \]
Observing that
\begin{equation*}
 \int_{\mathbb{R}^n\setminus B(0,R+1)}|Xv_{k}|^{p}dx\leq\int_{\mathbb{R}^n}|\phi_{R}|^{p}|Xv_{k}|^{p}dx\leq \int_{\mathbb{R}^n\setminus B(0,R)}|Xv_{k}|^{p}dx,
\end{equation*}
we further have
\begin{equation}\label{4-8}
 \mu_{\infty}=\lim_{R\to+\infty}\limsup_{k\to \infty}\int_{\mathbb{R}^{n}\setminus B(0,R)}|Xv_k|^{p}dx=\lim_{R\to+\infty}\limsup_{k\to \infty}\int_{\mathbb{R}^{n}}|\phi_{R}|^{p}|Xv_k|^{p}dx.
\end{equation}
Combining \eqref{4-4}, \eqref{4-6} and \eqref{4-8}, we obtain conclusion (4).

We next prove conclusion (5). For every $\eta\in C_0(\mathbb{R}^n)$ with $\eta\geq 0$, Minkowski's inequality gives
\[
 \left|
 \left(\int_{\mathbb{R}^n}\eta^p|Xu_k|^p\,dx\right)^{\frac1p}
 -
 \left(\int_{\mathbb{R}^n}\eta^p|Xv_k|^p\,dx\right)^{\frac1p}
 \right|
 \leq
 \left(\int_{\mathbb{R}^n}\eta^p|Xu|^p\,dx\right)^{\frac1p}.
\]
Passing to the limit in the two measure terms yields
\[
 \left|
 \left(\int_{\mathbb{R}^n}\eta^p\,d\widetilde\mu\right)^{\frac1p}
 -
 \left(\int_{\mathbb{R}^n}\eta^p\,d\mu\right)^{\frac1p}
 \right|
 \leq
 \left(\int_{\mathbb{R}^n}\eta^p|Xu|^p\,dx\right)^{\frac1p}.
\]
Fix $j\in J$ and choose $\eta_{j,r}\in C_0^\infty(\mathbb{R}^n)$ with
$0\leq\eta_{j,r}\leq1$, $\eta_{j,r}(x_j)=1$, and
$\operatorname{supp}\eta_{j,r}\subset B_{\rm E}(x_j,r)$, where $B_{\rm E}(x,r)$ denotes the Euclidean ball. Letting $r\to 0$ in the preceding inequality gives
\[
 \widetilde\mu(\{x_j\})=\mu(\{x_j\}).
\]
Consequently, (3) implies
\[
 \widetilde\mu(\{x_j\})\geq C_0a_j^{\frac{p}{p_{Q}^{*}}}
 \qquad(j\in J).
\]

Since the map $w\mapsto Xw$ is continuous from
$\mathcal{M}_{X,0}^{p,Q}(\mathbb{R}^n)$ to $L^p(\mathbb{R}^n;\mathbb{R}^m)$, we have $Xu_k\rightharpoonup Xu$ weakly in $L^p(\mathbb{R}^n;\mathbb{R}^m)$. Thus, for every nonnegative $\psi\in C_0^\infty(\mathbb{R}^n)$, weak lower semicontinuity gives
\[
 \int_{\mathbb{R}^n}\psi|Xu|^p\,dx
 \leq
 \liminf_{k\to\infty}\int_{\mathbb{R}^n}\psi|Xu_k|^p\,dx
 =
 \int_{\mathbb{R}^n}\psi\,d\widetilde\mu.
\]
Hence $\widetilde\mu\geq|Xu|^pdx$ as measures. Let $S:=\{x_j\mid j\in J\}$. Since $S$ is countable, $|Xu|^pdx(S)=0$. Therefore, for every Borel set $E\subset\mathbb{R}^n$,
\[
 \begin{aligned}
 \widetilde\mu(E)
 &=\widetilde\mu(E\setminus S)+\widetilde\mu(E\cap S)\\
 &\geq \int_{E\setminus S}|Xu|^pdx
 +C_0\sum_{x_j\in E}a_j^{\frac{p}{p_{Q}^{*}}}=\int_E|Xu|^pdx+C_0\sum_{x_j\in E}a_j^{\frac{p}{p_{Q}^{*}}}.
 \end{aligned}
\]
This proves conclusion (5).

For any $\varphi\in C_{0}(\mathbb{R}^n)$ with $\varphi\geq 0$, by the Brezis-Lieb lemma we have
\[ \int_{\mathbb{R}^n}\varphi|u|^{p_{Q}^{*}}dx=\lim_{k\to \infty}\left(\int_{\mathbb{R}^n}\varphi|u_{k}|^{p_{Q}^{*}}dx-\int_{\mathbb{R}^n}\varphi|v_{k}|^{p_{Q}^{*}}dx \right),\]
which implies that $|u_{k}|^{p_{Q}^{*}}dx\rightharpoonup |u|^{p_{Q}^{*}}dx+\omega$ in $\mathscr{M}^{+}(\mathbb{R}^n)$. Thus, for every $R>1$, we have 
\begin{equation}
\begin{aligned}\label{4-9}
\limsup_{k\to\infty}\int_{\mathbb{R}^n}|u_k|^{p_{Q}^{*}}dx&=\limsup_{k\to\infty}\left(\int_{\mathbb{R}^n}\phi_{R}|u_k|^{p_{Q}^{*}}dx+\int_{\mathbb{R}^n}(1-\phi_{R})|u_k|^{p_{Q}^{*}}dx   \right)\\
&=\limsup_{k\to\infty}\int_{\mathbb{R}^n}\phi_{R}|u_k|^{p_{Q}^{*}}dx+\int_{\mathbb{R}^n}(1-\phi_{R})|u|^{p_{Q}^{*}}dx+\int_{\mathbb{R}^n}(1-\phi_{R})d\omega.
\end{aligned}
\end{equation}
Note that 
\[ \lim_{R\to+\infty}\limsup_{k\to\infty}\int_{\mathbb{R}^n}\phi_{R}|u_k|^{p_{Q}^{*}}dx=\lim_{R\to\infty}\limsup_{k\to\infty}\int_{\mathbb{R}^{n}\setminus B(0,R)}|u_k|^{p_{Q}^{*}}dx=\omega_{\infty},\]
\[ \lim_{R\to+\infty}\int_{\mathbb{R}^n}(1-\phi_{R})|u|^{p_{Q}^{*}}dx=\int_{\mathbb{R}^n}|u|^{p_{Q}^{*}}dx,\quad \lim_{R\to+\infty}\int_{\mathbb{R}^n}(1-\phi_{R})d\omega=\|\omega\|. \]
Letting $R\to+\infty $ in \eqref{4-9}, it follows that 
$ \limsup_{k\to\infty}\int_{\mathbb{R}^n}|u_k|^{p_{Q}^{*}}dx=\int_{\mathbb{R}^n}|u|^{p_{Q}^{*}}dx+\omega_{\infty}+\|\omega\|$, 
which gives conclusion (6).

Similarly, for every $R>1$, we have 
\begin{equation}
\begin{aligned}\label{4-10}
\limsup_{k\to\infty}\int_{\mathbb{R}^n}|Xu_k|^{p}dx&=\limsup_{k\to\infty}\left(\int_{\mathbb{R}^n}\phi_{R}|Xu_k|^{p}dx+\int_{\mathbb{R}^n}(1-\phi_{R})|Xu_k|^{p}dx   \right)\\
&=\limsup_{k\to\infty}\int_{\mathbb{R}^n}\phi_{R}|Xu_k|^{p}dx+\int_{\mathbb{R}^n}(1-\phi_{R})d\widetilde{\mu}.
\end{aligned}
\end{equation}
Recall that
\[ \lim_{R\to+\infty}\limsup_{k\to\infty}\int_{\mathbb{R}^n}\phi_{R}|Xu_k|^{p}dx=\lim_{R\to\infty}\limsup_{k\to\infty}\int_{\mathbb{R}^{n}\setminus B(0,R)}|Xu_k|^{p}dx=\mu_{\infty}, \]
and
$ \lim_{R\to+\infty}\int_{\mathbb{R}^n}(1-\phi_{R})d\widetilde{\mu}=\|\widetilde{\mu}\|$. 
Taking $R\to+\infty$ in \eqref{4-10}, we achieve conclusion (7).
\end{proof}

\subsection{Proofs of Theorem \ref{thm4} and Corollary \ref{corollary1-3}}

\begin{proposition}
\label{prop4-1}
Assume conditions (\hyperref[H1]{H.1}) and (\hyperref[H2]{H.2}) hold. Then, the maximal level set
$H=\{x\in \mathbb{R}^n\mid \nu(x)=Q\}$ is an embedded smooth submanifold of $\mathbb R^n$. Moreover, there exists a map 
$T:H\times \mathbb{R}^n\to \mathbb{R}^n$ such that 
\begin{itemize}
 \item $T\in C^{\infty}(H\times \mathbb{R}^n)$.
  \item For each $w\in H$, the map $T(w,\cdot)\in \mathcal{G}$ and satisfies $T(w,0)=w$.
\end{itemize}
\end{proposition}
\begin{proof}
\textbf{Step 1.} Let $\mathfrak g:=\operatorname{Lie}(X_1,\ldots,X_m)$ be the Lie algebra generated by $X=(X_1,\ldots,X_m)$. The $\delta_t$-homogeneity of degree $1$ of each $X_j$ implies that every commutator of length $k$ is $\delta_t$-homogeneous of degree $k$. Hence,
\[
\mathfrak g=\bigoplus_{k=1}^{\alpha_n}\mathfrak g_k,
\qquad
[\mathfrak g_i,\mathfrak g_j]\subset \mathfrak g_{i+j},
\]
where $\mathfrak g_k$ denotes the vector space spanned by commutators of length $k$, and $\mathfrak g_k=\{0\}$ for $k>\alpha_n$. It follows that $\mathfrak g$ is a finite-dimensional nilpotent Lie algebra. Moreover, $\operatorname{div}Y=0$ for all $Y\in\mathfrak g$, so the flow of every $Y\in\mathfrak g$ preserves Lebesgue measure.

\noindent
\textbf{Step 2.}
As $\dim\mathfrak g<+\infty$, the global version of the Third Fundamental Theorem of Lie (see, e.g., \cite[Theorem 3.15.1]{Varadarajan1984}) yields a unique, up to isomorphism, connected and simply connected Lie group $G$ whose Lie algebra is isomorphic to $\mathfrak g$. Moreover, every
$Y=\sum_{i=1}^n Y_i(x)\partial_{x_i}\in\mathfrak g$ has triangular polynomial form, in the sense that each coefficient $Y_i(x)$ depends only on the preceding variables $x_1,\dots,x_{i-1}$; hence its flow is complete. The Lie-Palais theorem \cite[Theorem 7.6]{Tuynman2016} therefore gives a unique smooth global left action
$\Phi:G\times\mathbb R^n\to\mathbb R^n$ such that
\[
\left.\frac{d}{dt}\right|_{t=0}
\Phi(\exp_G(-tY),x)=Y(x),
\qquad
Y\in\mathfrak g.
\]
Here, $\exp_G$ denotes the exponential map of the Lie group $G$.

We convert this left action into a right action by setting
\begin{equation}\label{right-G-action}
x\cdot g:=\Phi(g^{-1},x),
\qquad
x\in\mathbb R^n,\ g\in G.
\end{equation}
Then $(x,g)\mapsto x\cdot g$ defines a smooth global right action of $G$ on $\mathbb R^n$. Furthermore, for any $Y\in\mathfrak g$,
\[
\begin{aligned}
\left.\frac{d}{dt}\right|_{t=0}
x\cdot\exp_G(tY)=\left.\frac{d}{dt}\right|_{t=0}\Phi(\exp_G(tY)^{-1},x)=\left.\frac{d}{dt}\right|_{t=0}
\Phi(\exp_G(-tY),x)=Y(x).
\end{aligned}
\]

For $x\in\mathbb R^n$, define the orbit map $F_x:G\to\mathbb R^n$ by
$F_x(g)=x\cdot g$. For any $Y\in\mathfrak g$,
\[
(dF_x)_e(Y)
=
\left.\frac{d}{dt}\right|_{t=0}
x\cdot\exp_G(tY)
=
Y(x)
=
\operatorname{ev}_x(Y),
\]
where $e$ denotes the identity element of $G$. By Proposition \ref{prop2-3},
$\operatorname{ev}_x(\mathfrak g)=T_x\mathbb R^n$ for all $x\in\mathbb R^n$, and hence $(dF_x)_e$ is surjective. The submersion theorem then implies that the orbit
$\mathcal O_x:=x\cdot G$ contains an open neighborhood of $x$.

If $y\in\mathcal O_x$, then $y=x\cdot g$ for some $g\in G$, and therefore
\[
\mathcal O_y
=
y\cdot G
=
(x\cdot g)\cdot G
=
x\cdot(gG)
=
x\cdot G
=
\mathcal O_x.
\]
Applying the preceding argument at $y$, we see that $\mathcal O_x$ contains an open neighborhood of every point $y\in\mathcal O_x$. Thus $\mathcal O_x$ is open for all $x\in\mathbb R^n$. Since the orbits form a disjoint partition of the connected space $\mathbb R^n$, there is only one orbit. Consequently, the right $G$-action $(x,g)\mapsto x\cdot g$  defined by \eqref{right-G-action} is transitive.

\noindent
\textbf{Step 3.}
Let
\[
K:=\{g\in G\mid 0\cdot g=0\}
\]
be the stabilizer of the origin. We first note that $K$ is a closed subgroup of $G$. It is a subgroup because $0\cdot e=0$, and, whenever $g,h\in K$, $0\cdot(gh)=(0\cdot g)\cdot h=0\cdot h=0$, while $0\cdot g^{-1}=(0\cdot g)\cdot g^{-1}=0\cdot e=0$.
To prove closedness, let $F_0:G\to\mathbb R^n$ be the orbit map $F_0(g)=0\cdot g$. Then
$K=F_0^{-1}(\{0\})$. As $F_0$ is smooth and $\{0\}$ is closed in $\mathbb R^n$, the subgroup $K$ is closed in $G$. Cartan's closed subgroup theorem (see, e.g., \cite[Theorem 3.42]{Warner1983}) implies that $K$ is an embedded Lie subgroup of $G$.

We next identify the Lie algebra of $K$. We claim that
\begin{equation}\label{Lie-K}
\operatorname{Lie}(K)=\mathfrak k_0=\{Y\in\mathfrak g\mid Y(0)=0\}.
\end{equation}
Indeed, if $Y\in\operatorname{Lie}(K)$, then $\exp_G(tY)\in K$ for all $t\in\mathbb R$, which gives
$0\cdot\exp_G(tY)=0$ for all $t\in\mathbb R$. Differentiating at $t=0$, we obtain
\[
Y(0)
=
\left.\frac{d}{dt}\right|_{t=0}0\cdot\exp_G(tY)
=
0.
\]
Thus $Y\in\mathfrak k_0$.

Conversely, suppose that $Y\in\mathfrak k_0$, namely $Y(0)=0$. The curve
$\gamma(t):=0\cdot\exp_G(tY)$ satisfies
\[
\gamma'(t)=Y(\gamma(t)),
\qquad
\gamma(0)=0.
\]
The constant curve $\gamma_0(t)\equiv0$ solves the same ODE with the same initial value. By uniqueness of solutions, $\gamma(t)=0$ for all $t\in\mathbb R$. Hence
\[
0\cdot\exp_G(tY)=0
\qquad
\text{for all }t\in\mathbb R,
\]
so $\exp_G(tY)\in K$ for all $t$, and therefore $Y\in\operatorname{Lie}(K)$. This proves \eqref{Lie-K}.

\noindent
\textbf{Step 4.}
We identify the orbit of $0$ with the homogeneous space $K\backslash G$. Define
\begin{equation}\label{theta-A}
\Theta:K\backslash G\to\mathbb R^n,
\qquad
\Theta(Kg):=0\cdot g.
\end{equation}
This map is well-defined. Indeed, if $Kg=Kh$, then $h=kg$ for some $k\in K$, and hence
\[
0\cdot h=0\cdot(kg)=(0\cdot k)\cdot g=0\cdot g.
\]
The transitivity of the right $G$-action gives the surjectivity of $\Theta$. Moreover, if $0\cdot g=0\cdot h$, then
\[
0=(0\cdot g)\cdot h^{-1}=0\cdot(gh^{-1}),
\]
so $gh^{-1}\in K$. Hence $Kg=Kh$, and $\Theta$ is also injective. Thus $\Theta$ is bijective.

We now prove that $\Theta$ is a diffeomorphism. Let
\[
\Pi:G\to K\backslash G,
\qquad
\Pi(g):=Kg,
\]
be the quotient map. Since $K$ is a closed Lie subgroup of $G$, the space $K\backslash G$ carries the standard quotient smooth structure, and $\Pi$ is a smooth surjective submersion. Because $\Theta\circ\Pi=F_0$ and $F_0$ is smooth, the quotient smooth structure implies that $\Theta$ is smooth.

Consider the differential of $\Theta$ at the base point $Ke$. The differential of the quotient map at the identity is
\[
(d\Pi)_e:\mathfrak g=T_eG\to T_{Ke}(K\backslash G).
\]
The map $(d\Pi)_e$ is surjective, and $\ker(d\Pi)_e=T_eK=\operatorname{Lie}(K)=\mathfrak k_0$.
Therefore $(d\Pi)_e$ induces a canonical linear isomorphism
\[
\iota:\mathfrak g/\mathfrak k_0\to T_{Ke}(K\backslash G),
\qquad
\iota([Y]):=(d\Pi)_e(Y).
\]
Differentiating the identity $F_0=\Theta\circ\Pi$ at $e\in G$, we obtain
\[
(dF_0)_e=(d\Theta)_{Ke}\circ(d\Pi)_e.
\]
For every $Y\in\mathfrak g$,
\[
\begin{aligned}
(dF_0)_e(Y)
&=
\left.\frac{d}{dt}\right|_{t=0}F_0(\exp_G(tY))
=
\left.\frac{d}{dt}\right|_{t=0}0\cdot\exp_G(tY)
=
Y(0).
\end{aligned}
\]
Consequently,
\[
(d\Theta)_{Ke}\bigl((d\Pi)_eY\bigr)=Y(0),
\qquad
\forall Y\in\mathfrak g.
\]
Under the canonical identification
$T_{Ke}(K\backslash G)\simeq\mathfrak g/\mathfrak k_0$, the differential $(d\Theta)_{Ke}$ is represented by the induced evaluation map
\[
\overline{\operatorname{ev}}_0:
\mathfrak g/\mathfrak k_0\to T_0\mathbb R^n,
\qquad
\overline{\operatorname{ev}}_0([Y])=Y(0).
\]
This map is well-defined because $\ker\operatorname{ev}_0=\{Y\in\mathfrak g\mid Y(0)=0\}=\mathfrak k_0$.
Furthermore, $\operatorname{ev}_0(\mathfrak g)=T_0\mathbb R^n$, so the induced map $\overline{\operatorname{ev}}_0$ is an isomorphism. Hence
\[
(d\Theta)_{Ke}:T_{Ke}(K\backslash G)\to T_0\mathbb R^n
\]
is a linear isomorphism.

We next show that $d\Theta$ is an isomorphism at every point of $K\backslash G$. For $g\in G$, define
\[
\bar R_g:K\backslash G\to K\backslash G,
\qquad
\bar R_g(Kh):=Khg,
\]
and
\[
R_g:\mathbb R^n\to\mathbb R^n,
\qquad
R_g(x):=x\cdot g.
\]
The map $\bar R_g$ is well-defined: if $Kh=Kh'$, then $h'=kh$ for some $k\in K$, and hence $Kh'g=Kkhg=Khg$. Moreover, $\bar R_g$ is a diffeomorphism with inverse $\bar R_{g^{-1}}$, and $R_g$ is a diffeomorphism with inverse $R_{g^{-1}}$. By the definition of $\Theta$, we have the equivariance relation
\[
\Theta\circ\bar R_g=R_g\circ\Theta.
\]
Indeed, for every $Kh\in K\backslash G$,
\[
(\Theta\circ\bar R_g)(Kh)
=
\Theta(Khg)
=
0\cdot(hg),
\]
whereas
\[
(R_g\circ\Theta)(Kh)
=
R_g(0\cdot h)
=
(0\cdot h)\cdot g
=
0\cdot(hg).
\]
Differentiating $\Theta\circ\bar R_g=R_g\circ\Theta$ at the base point $Ke$ gives
\[
(d\Theta)_{Kg}\circ(d\bar R_g)_{Ke}
=
(dR_g)_0\circ(d\Theta)_{Ke}.
\]
Thus
\[
(d\Theta)_{Kg}
=
(dR_g)_0\circ(d\Theta)_{Ke}\circ
\bigl((d\bar R_g)_{Ke}\bigr)^{-1}.
\]
Since $(d\bar R_g)_{Ke}$, $(dR_g)_0$, and $(d\Theta)_{Ke}$ are all linear isomorphisms, it follows that
\[
(d\Theta)_{Kg}:T_{Kg}(K\backslash G)\to T_{0\cdot g}\mathbb R^n
\]
is a linear isomorphism for every $g\in G$. Thus $d\Theta$ is an isomorphism at every point of $K\backslash G$. By the inverse function theorem, $\Theta$ is a local diffeomorphism.

As $\Theta$ is both bijective and a local diffeomorphism, it is a global diffeomorphism. Indeed, for every $p\in K\backslash G$, there exist open neighborhoods
$U_p\subset K\backslash G$ and $V_p\subset\mathbb R^n$ such that
$\Theta|_{U_p}:U_p\to V_p$ is a diffeomorphism. By bijectivity, the global inverse $\Theta^{-1}$ agrees on $V_p$ with the smooth local inverse
$(\Theta|_{U_p})^{-1}:V_p\to U_p$. Hence $\Theta^{-1}$ is smooth locally around every point of $\mathbb R^n$, and therefore
\[
\Theta:K\backslash G\to\mathbb R^n
\]
is a diffeomorphism.

It remains in this step to prove that $K$ is connected. Since $G$ is a connected simply connected nilpotent Lie group, the exponential map
$\exp_G:\mathfrak g\to G$ is a global diffeomorphism. Hence
$G\simeq\mathbb R^{\dim\mathfrak g}$. In particular, the fundamental group $\pi_1(G)$ is trivial and the zeroth homotopy set $\pi_0(G)$ consists of a single point. On the other hand,
$K\backslash G\simeq\mathbb R^n$, so $\pi_1(K\backslash G)$ is also trivial. Since $K$ is a closed Lie subgroup of $G$, the projection $G\to K\backslash G$ is a locally trivial principal $K$-bundle. The fibration
$K\hookrightarrow G\to K\backslash G$ gives the exact sequence
\[
\pi_1(G)\longrightarrow\pi_1(K\backslash G)\longrightarrow\pi_0(K)\longrightarrow\pi_0(G).
\]
Thus $\pi_0(K)$ is trivial, and $K$ is connected.

\noindent
\textbf{Step 5.}
We now characterize $H$ in terms of the normalizer of $K$. For $1\le k\le\alpha_n$, set
\[
W_k:=\operatorname{span}\{\partial_{x_i}\mid \alpha_i=k\},
\qquad
W_{\le k}:=\bigoplus_{\ell=1}^k W_\ell,
\qquad
\mathfrak g_{\le k}:=\bigoplus_{\ell=1}^k\mathfrak g_\ell.
\]
For every $Y\in\mathfrak g_k$, one has $Y(0)\in W_k$. Therefore
\[
V_k(0)=\operatorname{ev}_0(\mathfrak g_{\le k})\subset W_{\le k},
\]
where $\operatorname{ev}_x$ denotes the evaluation map
$\operatorname{ev}_x(Y)=Y(x)$. Hence
$\nu_k(0)=\dim V_k(0)\leq \dim W_{\le k}$. Since
\[
Q=\sum_{i=1}^n\alpha_i
=
\sum_{k=1}^{\alpha_n} k\cdot \dim W_k
=
\alpha_n n-\sum_{k=1}^{\alpha_n-1}\dim W_{\le k},
\]
while
\[
\nu(0)
=
\sum_{k=1}^{\alpha_n}k(\nu_k(0)-\nu_{k-1}(0))
=
\alpha_n n-\sum_{k=1}^{\alpha_n-1}\nu_k(0),
\]
and $\nu(0)=Q$, we have
\[
\sum_{k=1}^{\alpha_n-1} (\dim W_{\le k}-\nu_k(0))=0.
\]
Thus
\begin{equation}\label{4-A-12}
\nu_k(0)=\dim W_{\le k},
\qquad
V_k(0)=W_{\le k},
\qquad
1\le k\le \alpha_n.
\end{equation}

For each $x\in\mathbb R^n$, note that
$T_x\mathbb R^n=W_{\leq k}\oplus W_{\leq k}^{\perp}=W_k\oplus W_k^{\perp}$. We may therefore define projections
$\Pi_{\le k}:T_x\mathbb R^n\to W_{\le k}$ and
$\Pi_k:T_x\mathbb R^n\to W_k$ by
\[
\Pi_{\le k}(Y(x))
:=
\Pi_{\le k}\left(\sum_{i=1}^n Y_i(x)\partial_{x_i}\right)
=
\sum_{{i:\alpha_i\le k}}Y_i(x)\partial_{x_i},
\]
and
\[
\Pi_k(Y(x))
:=
\Pi_k\left(\sum_{i=1}^n Y_i(x)\partial_{x_i}\right)
=
\sum_{{i:\alpha_i=k}}Y_i(x)\partial_{x_i}.
\]

For each $x\in\mathbb R^n$, define
$P_{k,x}:\mathfrak g_{\le k}\to W_{\le k}$ by
\[
P_{k,x}(Y):=\Pi_{\le k}(Y(x)).
\]
The decompositions
\[
\mathfrak g_{\le k}=\bigoplus_{\ell\le k}\mathfrak g_\ell,
\qquad
W_{\le k}=\bigoplus_{\ell\le k}W_\ell
\]
allow us to write $P_{k,x}$ as a block matrix
$(M_{j,\ell})_{1\le j,\ell\le k}$, where each block
$M_{j,\ell}:\mathfrak g_\ell\to W_j$ is given by
$Y\mapsto \Pi_j(Y(x))$ for $Y\in\mathfrak g_\ell$. Proposition \ref{prop2-2} gives $M_{j,\ell}=0$ for $j<\ell$, so $P_{k,x}$ is block lower-triangular. Moreover, the diagonal block
$M_{\ell,\ell}:\mathfrak g_\ell\to W_\ell$ is independent of $x$ and is given by
$Y\mapsto \Pi_\ell(Y(0))$.

At $x=0$, the map $P_{k,0}:\mathfrak g_{\le k}\to W_{\le k}$ is block diagonal. In view of \eqref{4-A-12}, $P_{k,0}$ is surjective; hence the diagonal blocks $M_{\ell,\ell}$, $1\le\ell\le k$, are surjective for all $x\in\mathbb R^n$, since they are independent of $x$. Consequently,
\[
P_{k,x}=\Pi_{\le k}\circ \operatorname{ev}_x
\]
is surjective for all $x\in\mathbb R^n$. It follows that
$\Pi_{\le k}(V_k(x))=W_{\le k}$ and
\[
\nu_k(x)=\dim V_k(x)\ge \dim W_{\le k}=\nu_k(0).
\]
Using \eqref{2-1}, we obtain
\[
\nu(x)=\alpha_n n-\sum_{k=1}^{\alpha_n-1}\nu_k(x)
\le
\alpha_n n-\sum_{k=1}^{\alpha_n-1}\nu_k(0)
=\nu(0)=Q.
\]
Therefore,
\begin{equation}\label{4-B-15}
x\in H
\qquad \Longleftrightarrow \qquad
\nu_k(x)=\nu_k(0)\quad \forall 1\leq k\leq \alpha_n.
\end{equation}
In particular, if $x\in H$, then
\[
\Pi_{\le k}\big|_{V_k(x)}:V_k(x)\to W_{\le k}
\]
is an isomorphism for $1\le k\le\alpha_n$.

For $x\in\mathbb R^n$, define the isotropy subalgebra
\[
\mathfrak k_x:=\{Y\in\mathfrak g\mid Y(x)=0\}.
\]
By Proposition \ref{prop2-3}, the evaluation map
$\operatorname{ev}_x:\mathfrak g\to T_x\mathbb R^n$ is surjective for every $x\in\mathbb R^n$, and hence
\begin{equation}\label{4-A-13}
\dim\mathfrak k_x=\dim\mathfrak g-n.
\end{equation}
We claim that
\begin{equation}\label{4-A-14}
x\in H\quad \Longleftrightarrow\quad \mathfrak k_x=\mathfrak k_0.
\end{equation}

Assume first that $x\in H$. Since
\[
\mathfrak k_0=\bigoplus_{k=1}^{\alpha_n}(\mathfrak k_0\cap\mathfrak g_k),
\]
it suffices to consider $Y\in\mathfrak k_0\cap\mathfrak g_k$. Then $Y(0)=0$, and homogeneity gives
\[
\Pi_{\le k}(Y(x))=0,
\qquad
x\in\mathbb R^n.
\]
Since $Y(x)\in V_k(x)$ and
$\Pi_{\le k}|_{V_k(x)}$ is an isomorphism, we get $Y(x)=0$. Thus
$\mathfrak k_0\subset\mathfrak k_x$. By \eqref{4-A-13}, these two spaces have the same dimension, and hence
$\mathfrak k_x=\mathfrak k_0$.

Conversely, suppose that $\mathfrak k_x=\mathfrak k_0$. Then, for every $1\leq k\leq\alpha_n$,
\[
\begin{aligned}
\nu_k(x)
&=
\dim\operatorname{ev}_x(\mathfrak g_{\le k})
=
\dim\mathfrak g_{\le k}
-\dim(\mathfrak g_{\le k}\cap\mathfrak k_x) \\
&=
\dim\mathfrak g_{\le k}
-\dim(\mathfrak g_{\le k}\cap\mathfrak k_0)
=
\dim\operatorname{ev}_0(\mathfrak g_{\le k})
=
\nu_k(0).
\end{aligned}
\]
Thus $\nu(x)=\nu(0)=Q$, and $x\in H$. This proves \eqref{4-A-14}.

Let
\begin{equation}\label{eq-prop41-N}
N_G(K):=\{a\in G\mid a^{-1}Ka=K\}
\end{equation}
be the normalizer of $K$ in $G$. If $x=0\cdot g$, then the stabilizer of $x$ is $g^{-1}Kg$, and hence
\begin{equation*}
\mathfrak k_x=\operatorname{Ad}_{g^{-1}}\mathfrak k_0.
\end{equation*}
Since $K$ is connected and $G$ is simply connected and nilpotent, connected Lie subgroups of $G$ are uniquely determined by their Lie algebras. Therefore, by \eqref{4-A-14},
\begin{equation}\label{eq-prop41-H-normalizer}
x=0\cdot g\in H
\quad\Longleftrightarrow\quad
g^{-1}Kg=K
\quad\Longleftrightarrow\quad
g\in N_G(K).
\end{equation}
Consequently,
\begin{equation}\label{eq-prop41-H-ThetaN}
H=\Theta(K\backslash N_G(K))=\{0\cdot a\mid a\in N_G(K)\}.
\end{equation}

The subgroup $N_G(K)$ is closed in $G$, and hence is an embedded Lie subgroup. Since $K\subset N_G(K)$ is a closed Lie subgroup, the natural inclusion
$K\backslash N_G(K)\hookrightarrow K\backslash G$ is an embedded smooth map. The diffeomorphism
$\Theta:K\backslash G\to\mathbb R^n$ then shows that
\[
H=\Theta(K\backslash N_G(K))
\]
is an embedded smooth submanifold of $\mathbb R^n$. We denote the restricted diffeomorphism by
\begin{equation}\label{eq-prop41-ThetaN}
\Theta_N:K\backslash N_G(K)\to H,
\qquad
\Theta_N(Ka):=0\cdot a.
\end{equation}

\noindent
\textbf{Step 6.}
For $a\in N_G(K)$, define
\[
\bar A_a:K\backslash G\to K\backslash G,
\qquad
\bar A_a(Kg):=Kag.
\]
If $Kg=Kh$, then $h=kg$ for some $k\in K$. Because $a\in N_G(K)$, we have $aka^{-1}\in K$, and hence
\[
Kah=Kakg=K(aka^{-1})ag=Kag.
\]
Thus $\bar A_a(Kh)=\bar A_a(Kg)$, so $\bar A_a$ is well-defined.

We next show that $\bar A_a$ is a diffeomorphism on $K\backslash G$. Recall that $K$ is a closed subgroup of $G$, that $K\backslash G$ is a smooth quotient manifold, and that
$\Pi:G\to K\backslash G$ is a smooth submersion. Let
\[
L_a:G\to G,
\qquad
L_a(g):=ag.
\]
Then $L_a$ is a smooth diffeomorphism of $G$, and
\[
(\bar A_a\circ\Pi)(g)
=
\bar A_a(Kg)
=
Kag
=
\Pi(ag)
=
(\Pi\circ L_a)(g).
\]
Thus $\bar A_a\circ\Pi=\Pi\circ L_a$. Since $\Pi\circ L_a$ is smooth and $\Pi$ is a surjective submersion, $\bar A_a$ is smooth. Moreover, $a^{-1}\in N_G(K)$, so $\bar A_{a^{-1}}$ is also well-defined and smooth. For every $Kg\in K\backslash G$,
\[
\bar A_{a^{-1}}(\bar A_a(Kg))
=
\bar A_{a^{-1}}(Kag)
=
Ka^{-1}ag
=
Kg,
\]
and similarly
\[
\bar A_a(\bar A_{a^{-1}}(Kg))
=
\bar A_a(Ka^{-1}g)
=
Kaa^{-1}g
=
Kg.
\]
Therefore $\bar A_a^{-1}=\bar A_{a^{-1}}$, and $\bar A_a$ is a diffeomorphism of $K\backslash G$.

For $a\in N_G(K)$, define the corresponding map on $\mathbb R^n$ by
\begin{equation}\label{A-a-map}
A_a:\mathbb R^n\to\mathbb R^n,
\qquad
A_a:=\Theta\circ \bar A_a\circ \Theta^{-1}.
\end{equation}
Then $A_a$ is a smooth diffeomorphism of $\mathbb R^n$.

We prove that $A_a$ is an automorphism of the vector fields $X_1,\ldots,X_m$. First, $A_a$ commutes with the right $G$-action on $\mathbb R^n$. Indeed, by transitivity of the right $G$-action in Step 4, every point $x\in\mathbb R^n$ can be written as $x=0\cdot g$ for some $g\in G$. Hence
\[
A_a(R_h(x))
=
A_a((0\cdot g)\cdot h)
=
A_a(0\cdot gh)
=
0\cdot a(gh)
=
0\cdot agh.
\]
On the other hand,
\[
R_h(A_a(x))
=
R_h(A_a(0\cdot g))
=
R_h(0\cdot ag)
=
(0\cdot ag)\cdot h
=
0\cdot agh.
\]
Therefore $A_a\circ R_h=R_h\circ A_a$ for all $h\in G$.

Recall that, for every $Y\in\mathfrak g$,
\[
Y(x)=\left.\frac{d}{dt}\right|_{t=0}x\cdot\exp_G(tY).
\]
Using the commutation relation $A_a\circ R_h=R_h\circ A_a$, we obtain, for every $x\in\mathbb R^n$ and $Y\in\mathfrak g$,
\[
\begin{aligned}
d(A_a)_x(Y(x))
&=
d(A_a)_x
\left(
\left.\frac{d}{dt}\right|_{t=0}
x\cdot\exp_G(tY)
\right)  \\
&=
\left.\frac{d}{dt}\right|_{t=0}
A_a\bigl(x\cdot\exp_G(tY)\bigr)  \\
&=
\left.\frac{d}{dt}\right|_{t=0}
A_a(x)\cdot\exp_G(tY)
=
Y(A_a(x)).
\end{aligned}
\]
Thus
\[
d(A_a)_x(Y(x))=Y(A_a(x)),
\qquad
x\in\mathbb R^n,\quad Y\in\mathfrak g.
\]
Taking $Y=X_j$ gives $(A_a)_*X_j=X_j$ for all $1\le j\le m$. Equivalently, for any
$f\in C^\infty(\mathbb R^n)$ and $1\le j\le m$,
\[
\begin{aligned}
X_j(f\circ A_a)(x)
&=
d(f\circ A_a)_x(X_j(x))
=
df_{A_a(x)}\bigl(d(A_a)_x(X_j(x))\bigr)  \\
&=
df_{A_a(x)}\bigl(X_j(A_a(x))\bigr)
=
(X_jf)(A_a(x))
=
\bigl((X_jf)\circ A_a\bigr)(x).
\end{aligned}
\]
Hence $A_a$ is an automorphism of the vector fields $X_1,\ldots,X_m$.

We next show that $A_a$ is volume-preserving, namely,
\[
A_a^*dx=dx.
\]
We begin with the invariance of the Lebesgue measure $dx$ under the right $G$-action. Let
$R_h:\mathbb R^n\to\mathbb R^n$ be the right action given by \eqref{right-G-action}, that is,
\[
R_h(x):=x\cdot h
\qquad
\forall h\in G.
\]
For any $Y\in\mathfrak g$, let
$\Phi_t^Y:\mathbb R^n\to\mathbb R^n$ be the flow of $Y$. The right action law gives
\[
\frac{d}{dt}\bigl(x\cdot\exp_G(tY)\bigr)
=
Y\bigl(x\cdot\exp_G(tY)\bigr),
\]
and hence $\Phi_t^Y=R_{\exp_G(tY)}$. Since $\operatorname{div}Y=0$,
\[
\frac{d}{dt}(\Phi_t^Y)^*dx
=
(\Phi_t^Y)^*((\operatorname{div}Y)dx)
=
0.
\]
As $(\Phi_0^Y)^*dx=dx$, we obtain $(\Phi_t^Y)^*dx=dx$.
The exponential map $\exp_G:\mathfrak g\to G$ is a global diffeomorphism, so every $h\in G$ can be written as $h=\exp_G(Y)$ for some $Y\in\mathfrak g$. Therefore
\[
R_h^*dx=dx,
\qquad
h\in G.
\]
Thus the Lebesgue measure $dx$ is invariant under the right $G$-action.

We now verify that $A_a^*dx$ is also invariant under the right $G$-action. As shown above,
$A_a\circ R_h=R_h\circ A_a$ for all $h\in G$. Consequently,
\[
\begin{aligned}
R_h^*(A_a^*dx)
&=
(A_a\circ R_h)^*dx
=
(R_h\circ A_a)^*dx
=
A_a^*(R_h^*dx)
=
A_a^*dx.
\end{aligned}
\]
Thus $A_a^*dx$ is right $G$-invariant.

Because $A_a$ is a smooth diffeomorphism of $\mathbb R^n$, there exists a smooth positive function
\[
J_a(x):=|\det J_{A_a}(x)|
\]
such that $A_a^*dx=J_a\,dx$. Using the right invariance of both $A_a^*dx$ and $dx$, we get
\[
(J_a\circ R_h)dx
=
R_h^*(J_a\,dx)
=
R_h^*(A_a^*dx)
=
A_a^*dx
=
J_a\,dx.
\]
By uniqueness of the Radon--Nikodym derivative with respect to Lebesgue measure,
\[
J_a\circ R_h=J_a
\qquad
\text{a.e. on }\mathbb R^n.
\]
Since $J_a\circ R_h$ and $J_a$ are continuous, the equality holds everywhere:
\[
J_a(x\cdot h)=J_a(x),
\qquad
x\in\mathbb R^n,\ h\in G.
\]
The right $G$-action on $\mathbb R^n$ is transitive, so $J_a$ is constant on $\mathbb R^n$. Therefore there exists a constant $c(a)>0$ such that
\[
A_a^*dx=c(a)\,dx.
\]

It remains to prove that $c(a)=1$. Consider the map
\[
\Psi_a:\mathbb R^n\to\mathbb R^n,
\qquad
\Psi_a(x):=A_a(x)\cdot a^{-1}.
\]
Then
\[
\Psi_a(0)
=
A_a(0)\cdot a^{-1}
=
(0\cdot a)\cdot a^{-1}
=
0.
\]
Moreover, because the right action preserves $dx$,
\[
\Psi_a^*dx
=
(R_{a^{-1}}\circ A_a)^*dx
=
A_a^*(R_{a^{-1}}^*dx)
=
A_a^*dx
=
c(a)\,dx.
\]
Evaluating this identity at the origin gives
\[
c(a)=|\det J_{\Psi_a}(0)|.
\]

We now compute $\det J_{\Psi_a}(0)$. Under the diffeomorphism
\[
\Theta:K\backslash G\to\mathbb R^n,
\qquad
\Theta(Kg)=0\cdot g,
\]
the map $A_a$ is induced by $\bar A_a(Kg)=Kag$, while the right action by $a^{-1}$ is induced by
\[
\bar R_{a^{-1}}(Kg)=Kga^{-1}.
\]
Hence $\Psi_a=R_{a^{-1}}\circ A_a$ corresponds on $K\backslash G$ to
\[
\bar\Psi_a(Kg)
=
Kaga^{-1}.
\]
In particular, $\bar\Psi_a(Ke)=Ke$.

Recall that
\[
T_{Ke}(K\backslash G)\simeq\mathfrak g/\mathfrak k_0.
\]
With respect to this identification, the differential of $\bar\Psi_a$ at $Ke$ is induced by $\operatorname{Ad}_a$. Indeed, for $Y\in\mathfrak g$,
\[
\bar\Psi_a(K\exp_G(tY))
=
Ka\exp_G(tY)a^{-1}
=
K\exp_G(t\,\operatorname{Ad}_aY).
\]
Therefore $J_{\Psi_a}(0)$ is identified with the induced linear map
\[
\overline{\operatorname{Ad}}_a:
\mathfrak g/\mathfrak k_0\to \mathfrak g/\mathfrak k_0,
\qquad
\overline{\operatorname{Ad}}_a([Y])=[\operatorname{Ad}_aY].
\]
This induced map is well-defined because $a\in N_G(K)$, and hence
$aKa^{-1}=K$. Passing to Lie algebras gives
\[
\operatorname{Ad}_a\mathfrak k_0=\mathfrak k_0.
\]

Write $a=\exp_G(Z)$ for some $Z\in\mathfrak g$. Then
\[
\operatorname{Ad}_a=\exp(\operatorname{ad}_Z).
\]
The nilpotency of $\mathfrak g$ implies that $\operatorname{ad}_Z$ is nilpotent; hence $\operatorname{Ad}_a$ is unipotent. Since
$\operatorname{Ad}_a\mathfrak k_0=\mathfrak k_0$, the induced map
$\overline{\operatorname{Ad}}_a$ on $\mathfrak g/\mathfrak k_0$ is also unipotent. Thus
\[
\det\overline{\operatorname{Ad}}_a=1,
\]
which yields
\[
|\det J_{\Psi_a}(0)|
=
|\det\overline{\operatorname{Ad}}_a|
=
1.
\]
Therefore $c(a)=1$ and $A_a^*dx=dx$. This proves that $A_a$ is volume-preserving.

\noindent
\textbf{Step 7.}
Step 6 shows that
\begin{equation}\label{4-A-23}
A_a\in\mathcal G\qquad\text{for all }a\in N_G(K).
\end{equation}
It remains to construct the map $T$. We first construct a smooth section of the quotient map
\[
\pi_N:N_G(K)\to K\backslash N_G(K),
\qquad
\pi_N(a):=Ka.
\]
Let
\[
\mathfrak n
:=
\operatorname{Lie}(N_G(K))
=
\{Z\in\mathfrak g\mid [Z,\mathfrak k_0]\subset\mathfrak k_0\}.
\]
Because $G$ is connected, simply connected, and nilpotent, the exponential map
$\exp_G:\mathfrak g\to G$ is a global diffeomorphism. For
$a=\exp_G Z$, one has
\[
\operatorname{Ad}_a=\exp(\operatorname{ad}_Z).
\]
Since $\operatorname{ad}_Z$ is nilpotent, the condition
$\operatorname{Ad}_a\mathfrak k_0=\mathfrak k_0$ is equivalent to
$[Z,\mathfrak k_0]\subset\mathfrak k_0$.
Thus
\[
N_G(K)=\exp_G(\mathfrak n).
\]
Therefore $N_G(K)$ is a connected simply connected nilpotent Lie subgroup of $G$. Since $K\triangleleft N_G(K)$, the quotient
$K\backslash N_G(K)$ is a connected simply connected nilpotent Lie group with Lie algebra $\mathfrak n/\mathfrak k_0$.

Choose a linear complement $\mathfrak m$ of $\mathfrak k_0$ in $\mathfrak n$, so that
\[
\mathfrak n=\mathfrak k_0\oplus\mathfrak m.
\]
Let $\ell:\mathfrak n/\mathfrak k_0\to\mathfrak m$ be the inverse of the canonical linear isomorphism
$\mathfrak m\to\mathfrak n/\mathfrak k_0$. Since the exponential maps of connected simply connected nilpotent Lie groups are global diffeomorphisms, define
\[
\sigma\bigl(\exp_{K\backslash N_G(K)}(\xi)\bigr)
:=
\exp_{N_G(K)}(\ell(\xi)),
\qquad
\xi\in\mathfrak n/\mathfrak k_0.
\]
Then $\sigma:K\backslash N_G(K)\to N_G(K)$ is smooth. Moreover, because
\[
\pi_N\circ\exp_{N_G(K)}
=
\exp_{K\backslash N_G(K)}\circ d(\pi_N)_e,
\]
we have
\begin{equation}\label{4-A-24}
\pi_N\circ\sigma=\operatorname{id}_{K\backslash N_G(K)}.
\end{equation}

For $w\in H$, define
\[
a(w):=\sigma(\Theta_N^{-1}(w)).
\]
The smoothness of $\sigma$ and $\Theta_N^{-1}$ implies that
$a:H\to N_G(K)$ is smooth. Moreover, if $\Theta_N^{-1}(w)=Kb$, then \eqref{4-A-24} gives $Ka(w)=Kb$, and therefore
\begin{equation}\label{eq-prop41-aw-origin}
0\cdot a(w)=0\cdot b=w.
\end{equation}

Finally, define
\[
\mathcal A:N_G(K)\times\mathbb R^n\to\mathbb R^n,
\qquad
\mathcal A(a,x):=A_a(x).
\]
We claim that $\mathcal A$ is smooth. To see this, define
\[
B:N_G(K)\times K\backslash G\to K\backslash G,
\qquad
B(a,Kg):=Kag.
\]
The map $B$ is well-defined and smooth, because
\[
B\circ(\operatorname{id}_{N_G(K)}\times\Pi)(a,g)=\Pi(ag),
\]
and
\[
\operatorname{id}_{N_G(K)}\times\Pi:
N_G(K)\times G\to N_G(K)\times K\backslash G
\]
is a surjective submersion. Since
\[
\mathcal A
=
\Theta\circ B\circ
(\operatorname{id}_{N_G(K)}\times\Theta^{-1}),
\]
we conclude that
\[
\mathcal A\in C^\infty(N_G(K)\times\mathbb R^n).
\]

Define
\[
T:H\times\mathbb R^n\to\mathbb R^n,
\qquad
T(w,x):=A_{a(w)}(x)=\mathcal A(a(w),x).
\]
Since $a:H\to N_G(K)$ and
$\mathcal A:N_G(K)\times\mathbb R^n\to\mathbb R^n$ are smooth, we obtain
\[
T\in C^\infty(H\times\mathbb R^n).
\]
For every fixed $w\in H$, $a(w)\in N_G(K)$, and hence
\[
T(w,\cdot)=A_{a(w)}\in\mathcal G.
\]
Finally, by \eqref{eq-prop41-aw-origin},
\[
T(w,0)=A_{a(w)}(0)=0\cdot a(w)=w.
\]
The proof of Proposition \ref{prop4-1} is complete.
\end{proof}
\begin{remark}
The proof of Proposition \ref{prop4-1} implies that the volume-preserving automorphism group $\mathcal{G}$  acts transitively on $H$. Indeed, by \eqref{4-A-23} we have $A_a\in \mathcal{G}$ for all $a\in N_G(K)$.  Let $p,q\in H$. According to \eqref{eq-prop41-H-ThetaN}, 
 there exist $a,b\in N_G(K)$ such that
$ p=0\cdot a$ and $q=0\cdot b$. Now, substituting $c:=ba^{-1}\in N_G(K)$, it follows that  $A_c\in\mathcal G$, and
\[A_c(p)=A_c(0\cdot a)=0\cdot (ca)=0\cdot(ba^{-1}a)=0\cdot b=q.\]
Consequently, $\mathcal G$ acts transitively on $H$. 
\end{remark}

We now turn to the proof of Theorem \ref{thm4}, which exploits the concentration-compactness Lemma \ref{lemma4-2} together with the Sobolev inequalities for H\"{o}rmander vector fields established in \cite{chen-chen-li2024}.

\begin{proof}[Proof of Theorem \ref{thm4}]
For $Q\geq 3$ and $1<p<Q$, we let $\{v_k\}_{k=1}^{\infty}\subset \mathcal{M}_{X,0}^{p,Q}(\mathbb{R}^n)$ be a minimizing sequence for the variational problem \eqref{1-13} such that
\begin{equation}
  \|v_k\|_{L^{p_{Q}^{*}}(\mathbb{R}^n)}=1,\qquad \|Xv_k\|_{L^{p}(\mathbb{R}^n)}^{p}\to C_{0}.
\end{equation}
Then, we introduce the L\'{e}vy concentration function
\begin{equation}\label{4-12}
 Q_{k}(\rho):=\sup_{w\in H}\int_{B(w,\rho)}|v_{k}|^{p_{Q}^{*}}dx,
\end{equation}
where $B(w,\rho)=\{y\in \mathbb{R}^n\mid d(y,w)<\rho\}$ is the subunit ball, and $H=\{x\in \mathbb{R}^n\mid\nu(x)=Q\}$.

Observe first that for each $k\in \mathbb{N}^{+}$, the function $\rho\mapsto Q_{k}(\rho)$ is increasing on $(0,+\infty)$.  By Proposition \ref{prop4-1}, for every $w\in H$ there is a volume-preserving automorphism $T(w,\cdot)\in\mathcal G$ with $T(w,0)=w$. Hence, $T(w,(B(0,r)))=B(w,r)$ and
\[
 |B(w,r_2)\setminus B(w,r_1)|=|B(0,r_2)\setminus B(0,r_1)|=(r_2^Q-r_1^Q)|B(0,1)|
\]
for $0<r_1<r_2$. The absolute continuity of the integral of $|v_k|^{p_Q^*}\in L^1(\mathbb{R}^n)$ indicates that
\[
0\le Q_k(r_2)-Q_k(r_1)\le \sup_{w\in H}\int_{B(w,r_2)\setminus B(w,r_1)} |v_k|^{p_Q^*} dx\to0
\]
as $r_2\to r_1$. Thus $Q_k\in C((0,+\infty))$. The same uniform volume estimate also gives that $ \lim_{\rho\to 0^+}Q_k(\rho)=0$.
 Since $0\in H$ and $\|v_k\|_{L^{p_Q^*}}=1$, we have $\lim_{\rho\to+\infty}Q_k(\rho)=1$. Therefore, there exists a $\rho_k>0$ such that $Q_{k}(\rho_{k})=\frac{1}{2}$.  For this fixed $\rho_k$, the supremum over $H$ is attained. Indeed, the map
\[
F_{k,\rho_k}(w):=\int_{B(w,\rho_k)} |v_k|^{p_Q^*}  dx=\int_{B(0,\rho_k)} |v_k(T(w,y))|^{p_Q^*}\,dy
\]
is continuous on $H$. Moreover, observing that $B(w,\rho_k)\subset \mathbb R^n\setminus B(0,d(0,w)-\rho_k)$, so $F_{k,\rho_k}(w)\to0$ as $d(0,w)\to\infty$, $w\in H$. Since $H$ is closed by the upper semi-continuity of $\nu$ and the closed subunit balls are compact by Proposition \ref{prop2-9}, the maximum is reached on $H\cap\overline{B(0,R)}$ for $R$ sufficiently large. Consequently, there exists $w_{k}\in H$ such that
\begin{equation}\label{4-13}
  \int_{B(w_{k},\rho_k)}|v_{k}|^{p_{Q}^{*}}dx=Q_{k}(\rho_k)=\frac{1}{2}.
\end{equation}

According to Proposition \ref{prop4-1}, $H=\{x\in \mathbb{R}^n\mid\nu(x)=Q\}$ is an embedded smooth submanifold of $\mathbb R^n$. Furthermore,  there exists a smooth map $T:H\times \mathbb{R}^n\to \mathbb{R}^n$ such that $T(w,\cdot)\in \mathcal{G}$ and $T(w,0)=w$ holds for all $w\in H$. Let 
\[ u_{k}:=v_{k}^{w_{k},\rho_k}=\rho_{k}^{\frac{Q-p}{p}}v_{k}(T(w_{k},\delta_{\rho_{k}}(x))). \] A direct calculation yields that 
\[ \int_{\mathbb{R}^n}|u_{k}|^{p_{Q}^{*}}dx=1,\quad \mbox{and}\quad  \int_{\mathbb{R}^n}|Xu_{k}|^{p}dx= \int_{\mathbb{R}^n}|Xv_{k}|^{p}dx\to C_{0}\qquad \mbox{as}~~k \to+\infty.\]

We now verify the normalization identity used in the compactness argument. Since $T(w_k,\cdot)$ is a volume-preserving isometry for the subunit metric and $\delta_{\rho_k}(B(0,1))=B(0,\rho_k)$, we have
\[
T(w_k,\delta_{\rho_k}(B(0,1)))=B(w_k,\rho_k).
\]
Thus Proposition \ref{prop2-4} and \eqref{4-13} give
\[
  \int_{B(0,1)}|u_{k}|^{p_{Q}^{*}}dx=\int_{B(w_k,\rho_k)}|v_k|^{p_Q^*}dx=\frac12.
\]
Moreover, Proposition \ref{prop2-8} implies that $\delta_{\rho_k}$ and $T(w_k,\cdot)$ are bijections from $H$ onto $H$. Hence, as $w$ ranges over $H$, the point $z=T(w_k,\delta_{\rho_k}(w))$ also ranges over $H$, and
$T(w_k,\delta_{\rho_k}(B(w,1)))=B(z,\rho_k)$. Therefore
\begin{equation}\label{4-14}
  \int_{B(0,1)}|u_{k}|^{p_{Q}^{*}}dx=\frac{1}{2}=\sup_{w\in H}\int_{B(w,1)}|u_{k}|^{p_{Q}^{*}}dx.
\end{equation}

We now prove the relative compactness of the normalized sequence. Let $\{u_{k_\ell}\}_{\ell=1}^{\infty}$ be an arbitrary subsequence of $\{u_k\}_{k=1}^{\infty}$. It still satisfies
\[
\|u_{k_\ell}\|_{L^{p_Q^*}(\mathbb R^n)}=1,\qquad \|Xu_{k_\ell}\|_{L^p(\mathbb R^n)}^p\to C_0,
\]
and the normalization condition \eqref{4-14}. Relabeling this arbitrary subsequence as $\{u_k\}_{k=1}^{\infty}$, it is enough to show that it contains a strongly convergent subsequence.

Since $1<p<Q$, the space $\mathcal M_{X,0}^{p,Q}(\mathbb R^n)$ is reflexive. By Proposition \ref{prop2-10} and \cite[Theorem 1.39]{Willem2012}, the bounded sequence  $\{u_k\}_{k=1}^{\infty}\subset \mathcal{M}_{X,0}^{p,Q}(\mathbb{R}^n)$ admits a subsequence (still denoted by $\{u_k\}_{k=1}^{\infty}$) such that 
\begin{enumerate}
  \item $u_k\to u$ a.e. on $\mathbb{R}^n$;
  \item $u_k\rightharpoonup u$ weakly in $\mathcal{M}_{X,0}^{p,Q}(\mathbb{R}^n)$;
  \item $|X(u_k-u)|^{p}dx\rightharpoonup \mu$ in $\mathscr{M}^{+}(\mathbb{R}^n)$;\
  \item $ |Xu_k|^{p}dx\rightharpoonup \widetilde{\mu}$ in $\mathscr{M}^{+}(\mathbb{R}^n)$;\
  \item $|u_k-u|^{p_{Q}^{*}}dx\rightharpoonup \omega$ in $\mathscr{M}^{+}(\mathbb{R}^n)$.
\end{enumerate}
On the other hand, since $u\in \mathcal{M}_{X,0}^{p,Q}(\mathbb{R}^n)$,  \eqref{1-13} indicates that 
\[ C_{0}^{\frac{1}{p}}\|u\|_{L^{p_{Q}^{*}}(\mathbb{R}^n)}\leq \|Xu\|_{L^{p}(\mathbb{R}^n)}\leq \liminf_{k\to \infty}\|Xu_{k}\|_{L^{p}(\mathbb{R}^n)}=C_{0}^{\frac{1}{p}},\]
which gives $\|u\|_{L^{p_{Q}^{*}}(\mathbb{R}^n)}\leq 1$.  Hence, once we establish that 
 $\|u\|_{L^{p_{Q}^{*}}(\mathbb{R}^n)}=1$, it follows that $u$ is a minimizer for the variational problem \eqref{1-13}.

We now prove that $\|u\|_{L^{p_{Q}^{*}}(\mathbb{R}^n)}=1$. Using Lemma \ref{lemma4-2} and \eqref{1-13}, 
\begin{equation*}
\begin{aligned}
C_{0}&=\lim_{k\to\infty}\|Xu_{k}\|_{L^{p}(\mathbb{R}^n)}^{p}= \mu_{\infty}+\|\widetilde{\mu}\|\geq  C_{0}\omega_{\infty}^{\frac{p}{p_{Q}^{*}}}+ \|Xu\|_{L^{p}(\mathbb{R}^n)}^{p}+C_{0}\sum_{j\in J}a_{j}^{\frac{p}{p_{Q}^{*}}}\\
&\geq C_{0}\left(\omega_{\infty}^{\frac{p}{p_{Q}^{*}}}+\left(\|u\|_{L^{p_{Q}^{*}}(\mathbb{R}^n)}^{p_{Q}^{*}}\right)^{\frac{p}{p_{Q}^{*}}}+\sum_{j\in J}a_{j}^{\frac{p}{p_{Q}^{*}}}\right),
\end{aligned}
\end{equation*}
so
\begin{equation}\label{4-15}
  \omega_{\infty}^{\frac{p}{p_{Q}^{*}}}+\left(\|u\|_{L^{p_{Q}^{*}}(\mathbb{R}^n)}^{p_{Q}^{*}}\right)^{\frac{p}{p_{Q}^{*}}}+\sum_{j\in J}a_{j}^{\frac{p}{p_{Q}^{*}}}\leq 1. 
\end{equation}
Owing to Lemma \ref{lemma4-2}, we have
\begin{equation}\label{4-16}
  1=\lim_{k\to\infty}\|u_{k}\|_{L^{p_{Q}^{*}}(\mathbb{R}^n)}^{p_{Q}^{*}}=\|u\|_{L^{p_{Q}^{*}}(\mathbb{R}^n)}^{p_{Q}^{*}}+\|\omega\|+\omega_{\infty}=\|u\|_{L^{p_{Q}^{*}}(\mathbb{R}^n)}^{p_{Q}^{*}}+\sum_{j\in J}a_{j}+\omega_{\infty}.
\end{equation}
Combining \eqref{4-15}, \eqref{4-16} and the fact $0<\frac{p}{p_{Q}^{*}}<1$, we deduce that each of $\|u\|_{L^{p_{Q}^{*}}(\mathbb{R}^n)}^{p_{Q}^{*}}$, $\omega_{\infty}$ $a_{j}~(j\in J)$ must be either $0$ or $1$. But from \eqref{4-1} and \eqref{4-14} we know  $\omega_{\infty}\leq \frac{1}{2}$, and thus $\omega_{\infty}=0$. If $a_{j_{0}}=1$ for some $j_0\in J$, then $a_{j}=0$ for all $j\in J\setminus\{j_0\}$ and $u=0$. In this case, the measure $\omega$ is concentrated at a single point $z_{0}\in \mathbb{R}^n$, i.e., $\omega=\delta_{z_{0}}$.

We next show that if $\omega=\delta_{z_{0}}$, then  $z_{0}\in H$. Assume, for contradiction, that
$z_{0}\notin H$. Since $\nu(z_0)<Q$ and $\nu(\cdot)$ is an upper semi-continuous function, we can 
choose  $\varepsilon_0>0$ such that $\overline{B(z_0,\varepsilon_{0})}\cap H=\varnothing$. Letting 
\[ \tilde{\nu}_{z_{0}}:=\max_{x\in \overline{B(z_0,\varepsilon_{0})}}\nu(x), \]
it follows that $\tilde{\nu}_{z_{0}}<Q$. Then\\ 
\emph{\textbf{Case 1:}} $1<p\leq\tilde{\nu}_{z_{0}}$. Set
\[ s:=\left\{
       \begin{array}{ll}
        \frac{p\tilde{\nu}_{z_0}}{\tilde{\nu}_{z_0}-p} , & \hbox{if $1<p<\tilde{\nu}_{z_{0}}$;} \\[3mm]
        \frac{pQ}{Q-p}+1 , & \hbox{if $p=\tilde{\nu}_{z_{0}}$.}
       \end{array}
     \right.\]
Clearly, we have $s>\frac{pQ}{Q-p}$.  For any $0<\varepsilon\leq \frac{1}{2}\varepsilon_0$, by H\"{o}lder's inequality we have
\begin{equation}\label{4-17}
\int_{B(z_0,\varepsilon)}|u_{k}|^{\frac{pQ}{Q-p}}dx\leq \left(\int_{B(z_0,\varepsilon)}|u_{k}|^{s}dx\right)^{\frac{pQ}{s(Q-p)}}\left(\int_{B(z_0,\varepsilon)}1dx\right)^{1-\frac{pQ}{s(Q-p)}}.
\end{equation}
Choose a cut-off function $\phi\in C_{0}^{\infty}(\mathbb{R}^n)$ such that $0\leq \phi\leq 1$, $\phi\equiv1$ on $B(z_0,\frac{\varepsilon_{0}}{2})$, ${\rm supp}~\phi \subset\subset B(z_0,\varepsilon_{0})$. It then follows that $\|X\phi\|_{L^{\infty}(\mathbb{R}^n)}\leq C$. Note that $\|u_k\|_{\mathcal{M}_{X,0}^{p,Q}(\mathbb{R}^n)}\leq C$ for any $k\geq 1$. The Sobolev inequality in \cite{chen-chen-li2024} gives that
\begin{equation}\label{4-18}
\begin{aligned} \int_{B(z_0,\varepsilon)}|u_{k}|^{s}dx&\leq \int_{B(z_0,\varepsilon_{0})}|\phi u_{k}|^{s}dx\leq C\|X(\phi u_k)\|_{L^{p}(B(z_0,\varepsilon_{0}))}^{s}\\
&\leq C\left(\|u_kX\phi\|_{L^p(B(z_0,\varepsilon_{0}))}+\|\phi Xu_{k}\|_{L^p(B(z_0,\varepsilon_{0}))}  \right)^{s}\\
&\leq C\left(\|u_k\|_{L^p(B(z_0,\varepsilon_{0}))}+\|Xu_{k}\|_{L^p(B(z_0,\varepsilon_{0}))}  \right)^{s}\\
&\leq C\left(|B(z_0,\varepsilon_0)|^{\frac{1}{Q}}\|u_k\|_{L^{p_{Q}^{*}}(B(z_0,\varepsilon_{0}))}+\|Xu_{k}\|_{L^p(B(z_0,\varepsilon_{0}))}  \right)^{s}\leq C,
\end{aligned}
\end{equation}
where $C>0$ is a positive constant independent of $\varepsilon$ and $k$. Thus, for any $0<\varepsilon<\frac{1}{2}\varepsilon_0$ and $1<p\leq\tilde{\nu}_{z_{0}}$, we deduce from \eqref{4-17} and \eqref{4-18} that
\begin{equation}\label{4-19}
 \int_{B(z_0,\varepsilon)}|u_{k}|^{\frac{pQ}{Q-p}}dx\leq C|B(z_0,\varepsilon)|^{1-\frac{pQ}{s(Q-p)}}\qquad \forall~ k\geq 1. 
\end{equation}
\noindent\emph{\textbf{Case 2:}}~$\tilde{\nu}_{z_{0}}<p<Q$. For any $0<\varepsilon\leq \frac{1}{2}\varepsilon_0$, The Sobolev inequality in \cite{chen-chen-li2024} and estimate \eqref{4-18} give that
\[ \|u_{k}\|_{L^{\infty}(B(z_0,\varepsilon))}\leq\|\phi u_{k}\|_{L^{\infty}(B(z_0,\varepsilon_0))}\leq C\|X(\phi u_k)\|_{L^{p}(B(z_0,\varepsilon_0))}\leq C,\]
where $C>0$ is a positive constant independent of $\varepsilon$ and $k$. Hence,
\begin{equation}\label{4-20}
\int_{B(z_0,\varepsilon)}|u_{k}|^{\frac{pQ}{Q-p}}dx\leq C|B(z_0,\varepsilon)|. 
\end{equation}
As a result of \eqref{4-19} and \eqref{4-20}, we can choose a small $\varepsilon>0$ such that 
\[ \int_{B(z_0,\varepsilon)}|u_{k}|^{\frac{pQ}{Q-p}}dx\leq \frac{1}{2}\qquad \forall~ k\geq 1. \]
This contradicts the fact that $|u_{k}|^{p_{Q}^{*}}dx\rightharpoonup \omega=\delta_{z_{0}}$ in $\mathscr{M}^{+}(\mathbb{R}^n)$. Consequently, $\omega=\delta_{z_{0}}$ forces that $z_{0}\in H$.

Going back to \eqref{4-14}, we derive a contradiction for the case of $\omega=\delta_{z_{0}}$, 
\begin{equation}
  \int_{B(0,1)}|u_{k}|^{p_{Q}^{*}}dx=\frac{1}{2}=\sup_{w\in H}\int_{B(w,1)}|u_{k}|^{p_{Q}^{*}}dx\geq \int_{B(z_{0},1)}|u_{k}|^{p_{Q}^{*}}dx\to 1.
\end{equation}
Thus, we conclude from \eqref{4-16} that $\omega=0$ and $\|u\|_{L^{p_{Q}^{*}}(\mathbb{R}^n)}=1$, which means that $u$ is a minimizer.

 The Brezis--Lieb lemma yields
\[ \int_{\mathbb{R}^n}|u|^{p_{Q}^{*}}dx=\lim_{k\to \infty}\left(\int_{\mathbb{R}^n}|u_{k}|^{p_{Q}^{*}}dx-\int_{\mathbb{R}^n}|u_k-u|^{p_{Q}^{*}}dx \right),\]
so $\lim_{k\to\infty}\int_{\mathbb{R}^n}|u_k-u|^{p_{Q}^{*}}dx=0$. Moreover, since the map $w\mapsto Xw$ is continuous from
$\mathcal{M}_{X,0}^{p,Q}(\mathbb{R}^n)$ to $L^p(\mathbb{R}^n;\mathbb{R}^m)$, we have $Xu_k\rightharpoonup Xu$ in $L^p(\mathbb{R}^n;\mathbb{R}^m)$ and $\|Xu_k\|_{L^p(\mathbb{R}^n)}\to\|Xu\|_{L^p(\mathbb{R}^n)}$. Because $L^p(\mathbb{R}^n;\mathbb{R}^m)$ is uniformly convex for $1<p<\infty$, the Radon--Riesz property gives $Xu_k\to Xu$ strongly in $L^p(\mathbb{R}^n;\mathbb{R}^m)$. Hence, $u_k\to u$ in $\mathcal M_{X,0}^{p,Q}(\mathbb{R}^n)$. 

The preceding argument applies to every subsequence of the normalized sequence. Therefore every subsequence has a strongly convergent subsubsequence, and $\{v_k^{w_k,\rho_k}\}$ is relatively compact in $\mathcal M_{X,0}^{p,Q}(\mathbb{R}^n)$.
\end{proof}

\begin{proof}[Proof of Corollary \ref{corollary1-3}]
By Theorem \ref{thm4}, there exists a minimizer $u\in \mathcal{M}_{X,0}^{p,Q}(\mathbb{R}^n)$ for the variational problem \eqref{1-13} such that
\[
 \|u\|_{L^{p_Q^*}(\mathbb R^n)}=1,
 \qquad
 \int_{\mathbb R^n}|Xu|^pdx=C_0.
\]
Replacing $u$ by $|u|$, we may assume that $u$ is nonnegative. By the Lagrange multiplier rule, and by testing the resulting Euler--Lagrange equation with $u$ to identify the multiplier, we obtain
\[
 \int_{\mathbb R^n}|Xu|^{p-2}Xu\cdot X\varphi\,dx
 =C_0\int_{\mathbb R^n}u^{p_Q^*-1}\varphi\,dx
 \qquad
 \forall~\varphi\in C_0^\infty(\mathbb R^n).
\]
Set $u_0:=C_0^{\frac{1}{p_Q^*-p}}u$. 
Then $u_0\in \mathcal{M}_{X,0}^{p,Q}(\mathbb{R}^n)$ is a nontrivial nonnegative weak solution of \eqref{1-1-16}. Moreover,
\[
 \frac{u_0}{\|u_0\|_{L^{p_Q^*}(\mathbb R^n)}}=u,
\]
so the $L^{p_Q^*}$-normalization of $u_0$ is an extremal for \eqref{1-13}.

We next verify the least-energy property. Define the associated energy functional by
\[
 \mathcal E(v):=\frac1p\int_{\mathbb R^n}|Xv|^p\,dx
 -\frac1{p_Q^*}\int_{\mathbb R^n}|v|^{p_Q^*}\,dx,
 \qquad v\in\mathcal M_{X,0}^{p,Q}(\mathbb R^n).
\]
Let $v\in\mathcal{M}_{X,0}^{p,Q}(\mathbb R^n)$ be any nontrivial nonnegative weak solution of \eqref{1-1-16}. Testing the equation with $v$, by a standard density argument, gives
\[
 \int_{\mathbb R^n}|Xv|^p\,dx
 =\int_{\mathbb R^n}v^{p_Q^*}\,dx.
\]
Combining this identity with the optimal Sobolev inequality \eqref{1-13}, we obtain
\[
 C_0\left(\int_{\mathbb R^n}v^{p_Q^*}\,dx\right)^{\frac{p}{p_Q^*}}
 \leq \int_{\mathbb R^n}|Xv|^p\,dx
 =\int_{\mathbb R^n}v^{p_Q^*}\,dx,
\]
and hence
\[
 \int_{\mathbb R^n}|Xv|^p\,dx
 =\int_{\mathbb R^n}v^{p_Q^*}\,dx
 \geq C_0^{\frac{Q}{p}}.
\]
Since $\frac1p-\frac1{p_Q^*}=\frac1Q$, it follows that
\[
 \mathcal E(v)=\frac1Q\int_{\mathbb R^n}|Xv|^p\,dx
 \geq \frac1Q C_0^{\frac{Q}{p}}.
\]
On the other hand, the definition of $u_0$ and the identity
$\frac{p_Q^*}{p_Q^*-p}=\frac{Q}{p}$ yield
\[
 \int_{\mathbb R^n}|Xu_0|^p\,dx
 =\int_{\mathbb R^n}u_0^{p_Q^*}\,dx
 =C_0^{\frac{Q}{p}},
\]
and
\[
 \mathcal E(u_0)=\frac1Q C_0^{\frac{Q}{p}}
 =\inf\Bigl\{\mathcal E(v)\mid v\in\mathcal M_{X,0}^{p,Q}(\mathbb R^n)\setminus\{0\},\ 
 v\geq0,\ v\text{ is a weak solution of }\eqref{1-1-16}\Bigr\}.
\]
Thus, $u_0$ is a least-energy weak solution, that is, a ground state.

We next record the integrability and decay estimates. In the Carnot group setting, 
the proof of global integrability and optimal two-sided asymptotics in 
\cite{Loiudice2018} relies only on the global critical Sobolev inequality, 
Caccioppoli estimates for weak solutions, local boundedness and Harnack 
inequalities for nonlinear subelliptic equations, the doubling--Poincar\'e 
structure of subunit balls, and the homogeneity relations
\[
 d(\delta_t (x))=td(x),
 \qquad
 |B(0,r)|=r^Q|B(0,1)|.
\]
All these ingredients are available in the present setting, in view of 
\eqref{1-12}, Propositions \ref{prop2-5}-\ref{prop2-9}, and the local theory of 
Capogna--Danielli--Garofalo \cite{Capogna1993,Capogna1996-ajm}. By an argument analogous to that in \cite[Theorem 4.1]{Gandal-Loiudice-Tyagi2026},  the arguments of \cite{Loiudice2018} carry over to the present framework
and yield
\begin{equation*}
 u_0\in L^s(\mathbb R^n)
 \qquad \forall~s\in
 \left(\frac{Q(p-1)}{Q-p},\infty\right].
\end{equation*}
Additionally,  there exist constants $R_0>0$ and $c_1,c_2>0$ such that
\begin{equation*}
 c_1d(x)^{\frac{p-Q}{p-1}}
 \le u_0(x)\le
 c_2d(x)^{\frac{p-Q}{p-1}}
 \qquad \mbox{for}\qquad d(x)\ge R_0.
\end{equation*}

For any bounded open domain $U\subset \mathbb{R}^n$,  $u_0$ is a weak solution in 
$\mathcal{W}_{X}^{1,p}(U)$ satisfying
\[ \int_{U}|Xu_0|^{p-2}Xu_0\cdot X\varphi dx=\int_{U}V(x) u_0^{p-1}\varphi dx \qquad \forall \varphi\in \mathcal{W}_{X,0}^{1,p}(U), \]
where $V:=u_0^{p_{Q}^{*}-p}\in L^{\infty}(U)$. The local H\"older regularity theorem for nonlinear subelliptic equations with bounded lower-order coefficient, see \cite[Theorem 3.35]{Capogna1993}, implies that $u_0\in C_{\rm loc}^{\alpha}(U)$ for some $\alpha\in(0,1)$. Since $U\subset\subset\mathbb R^n$ is arbitrary, $u_0\in C(\mathbb R^n)$. 
Moreover, the Harnack inequality for nonnegative weak solutions, again from \cite[Theorem 3.1]{Capogna1993}, guarantees that  $u_0(x)>0$ for all $x\in \mathbb{R}^n$.

Finally, let $p=2$. Then
\[
 -\triangle_Xu_0=u_0^{2_{Q}^{*}-1}
 \qquad \mbox{in }\mathbb R^n,
\]
such that $u_0>0$ and $u_0\in C_{\rm loc}^{\alpha}(\mathbb R^n)$. The Rothschild--Stein hypoelliptic regularity \cite{stein1976}, followed by the standard bootstrap argument, gives $u_0\in C_{\rm loc}^{\infty}(\mathbb R^n)$. Therefore $u_0\in C^\infty(\mathbb R^n)$. This completes the proof.
\end{proof}

\subsection{Proof of Theorem \ref{thm5}}
Finally, we present the proof of Theorem \ref{thm5}.

\begin{proof}[Proof of Theorem \ref{thm5}]
Since $\mathcal{M}_{X,0}^{p,Q}(\Omega)\subset \mathcal{M}_{X,0}^{p,Q}(\mathbb{R}^n)$, it follows from \eqref{1-13} and \eqref{1-15} that $C_{0}\leq \widetilde{C_{0}}(\Omega)$. Let us choose $x_0\in \Omega\cap H$ and $r_0>0$ small enough such that the subunit ball $B_0=B(x_0,r_0)\subset \Omega$. The optimal Sobolev constant on $B_0$, corresponding to the exponent $\frac{Qp}{Q-p}$, is given by
\begin{equation}\label{4-25}
\widetilde{C_{0}}(B_0):=\inf_{u\in \mathcal{M}_{X,0}^{p,Q}(B_0),~\|u\|_{L^{p_{Q}^{*}}(B_0)}=1}\int_{B_0}|Xu|^{p}dx. 
\end{equation}
Therefore, we have $\widetilde{C_{0}}(\Omega)\leq \widetilde{C_{0}}(B_0)$. 

Next, we show that $\widetilde{C_{0}}(B_0)\leq C_{0}$, which implies $C_{0}=\widetilde{C_{0}}(\Omega)$. By \eqref{1-13}, for any $\varepsilon>0$, there exists a function $u_{\varepsilon}\in C_{0}^{\infty}(\mathbb{R}^n)$ such that 
\[ \int_{\mathbb{R}^n}|u_{\varepsilon}|^{p_{Q}^{*}}dx=1,\qquad \mbox{and}\qquad \int_{\mathbb{R}^n}|Xu_{\varepsilon}|^{p}dx<C_{0}+\varepsilon. \]
Since ${\rm supp}~u_{\varepsilon}$ is compact in $\mathbb{R}^n$, there exists $R_{0}>0$ such that ${\rm supp}~u_{\varepsilon}\subset B_0$. According to Proposition \ref{prop4-1}, there exists a smooth map $T:H\times \mathbb{R}^n\to \mathbb{R}^n$ such that 
$T(w,\cdot)\in \mathcal{G}$ and $T(w,0)=w$ for all $w\in H$. Let $T_{x_0}:=T(x_0,\cdot)\in \mathcal{G}$ and define
\[ v_{\varepsilon}(x):=\left(\frac{R_0}{r_{0}}\right)^{\frac{Q}{p_{Q}^{*}}}u_{\varepsilon}(T_{x_0}
(\delta_{\frac{R_{0}}{r_0}}(T_{x_0}^{-1}(x)))). \]
By means of Propositions \ref{prop2-4} and \ref{prop2-5}, we have  ${\rm supp}~v_{\varepsilon}\subset B_0$. Moreover, 
\[ \int_{B_0}|v_{\varepsilon}|^{p_{Q}^{*}}dx=\int_{\mathbb{R}^n}|u_{\varepsilon}|^{p_{Q}^{*}}dx=1,~~~~ \int_{B_0}|Xv_{\varepsilon}|^{p}dx=\int_{\mathbb{R}^n}|Xu_{\varepsilon}|^{p}dx<C_{0}+\varepsilon.\]
Letting $\varepsilon\to 0$, we obtain $\widetilde{C_{0}}(B_0)\le C_0$. Combining this with $C_0\le \widetilde{C_{0}}(\Omega)\le \widetilde{C_{0}}(B_0)$, we conclude that $\widetilde{C_{0}}(\Omega)=C_0$.
\end{proof}

\section*{Acknowledgements}
Hua Chen is supported by National Natural Science Foundation of China (Grant Nos. 12571249, 12221001 and 12131017) and National Key R\&D Program of China (no. 2022YFA1005602). Hong-Ge Chen is supported by National Natural Science Foundation of China (Grant No. 12201607) and Postdoctor Project of Hubei Province (Grant No. 2024HBBHXF095). Jin-Ning Li is supported by China National Postdoctoral Program for Innovative Talents (Grant No.  BX20230270) and China Postdoctoral Science Foundation (Grant No. 2024M762460). The authors would like to thank Fei Liu for helpful discussions
concerning Proposition \ref{prop4-1}.

\section*{Declarations}
On behalf of all authors, the corresponding author states that there is no conflict of interest. Also, no data was used for the research described in the article.

\end{document}